\g@addto@macro\normalsize{%
	\setlength\abovedisplayskip{5pt}
	\setlength\belowdisplayskip{5pt}
	\setlength\abovedisplayshortskip{6pt}
	\setlength\belowdisplayshortskip{6pt}
}
\definecolor{chcol}{rgb}{0.4,0.,0.9}
\newcommand{\change}[1]{\textcolor{black}{#1}}
\newcommand{\pderivative}[2]{\frac{\partial #1}{\partial #2}}
\let\rho\varrho
\newcommand{\avg}[1]{\left\{\hspace*{-3pt}\left\{#1\right\}\hspace*{-3pt}\right\}}
\newcommand{\jump}[1]{\ensuremath{\left\llbracket #1 \right\rrbracket}}
\newcommand{\dS}{{\,\operatorname{dS}}}		% surface operator
\newcommand{\ec}{{\mathrm{EC}}}			% notation for entropy conservative
\newcommand{\es}{{\mathrm{ES}}}			% notation for entropy stable
\newcommand{\ent}{{S}} 				% superscript for entropy flux
\newcommand{\ma}{-}					% notation for "master" side
\renewcommand{\sl}{+}					% notation for "slave side
\newcommand\iprod[1]{\left\langle #1\right\rangle} 			% inner product
\newcommand\iprodN[1]{\left\langle #1\right\rangle_{\!N}}		% discrete inner product subscripted with N
\newcommand\isurfN{\int\limits_{ N }\! } 			                %surfint
\newcommand\isurfE{\int\limits_{\partial E }\! } 			                %surfint
\newcommand\isurfEN{\mkern-11mu\int\limits_{\partial E , N}\mkern-13mu } 			%surfint
\newcommand\isurfEnuN{\mkern-11mu\int\limits_{\partial E_\nu , N}\mkern-13mu } 		%surfint
\renewcommand\vec[1]{\accentset{\,\rightarrow}{#1}}
\newcommand\spacevec[1]{\accentset{\,\rightarrow}{#1}}		% spatial vector, i.e. x \hat x + y \hat y + z \hat z
\newcommand\contraspacevec[1]{\spacevec{\tilde{#1}}}		% contravariant space vector
\newcommand\statevec[1]{\mathbf #1}					% state vector, e.g. [rho, rhov, E, B]^T
\newcommand\statevecGreek[1]{\boldsymbol #1}			% state vector for Greek symbols
\newcommand\contrastatevec[1]{\tilde{\mathbf #1}} 			% contravariant state vector
\newcommand\acclrvec[1]{\accentset{\,\leftrightarrow}{#1}}	% define leftrightarrow as an accent
\newcommand\bigstatevec[1]{\acclrvec{{\mathbf #1}}}		% block vector
\newcommand\biggreekstatevec[1]{\acclrvec{\boldsymbol #1}}	% block vector for Greek letters
\newcommand\bigcontravec[1]{\acclrvec{\tilde{\mathbf #1}}} 	% contravariant block vector
\newcommand\biggreekcontravec[1]{\acclrvec{\tilde{\boldsymbol #1}}} 	% contravariant block vector
\newcommand\threeMatrix[1]{\underline{ #1}}				% 3x3 matrix
\newcommand\nineMatrix[1]{\mathsf{ #1}}					% 9x9 matrix
\newcommand\twentysevenMatrix[1]{\underline{\mathsf{ #1}}}  % 27x27 sized matrix
\newcommand\matrixvec[1]{\mathcal #1}					% (N+1) x (N+1) sized matrix
\newcommand{\supEuler}{{\mathrm{Euler}}}
\newcommand{\supMHD}{{\mathrm{MHD}}}
\newcommand{\supGLM}{{\mathrm{GLM}}}
\newcommand{\noncon}{\statevecGreek{\Upsilon}}	
\newcommand{\Jan}{\statevecGreek{\Phi}^\supMHD}						% old source term vector
\newcommand{\JanC}{\statevecGreek{\phi}^\supMHD}				% continuous source term vector
\newcommand{\JanD}{\statevecGreek{\Phi}^\supMHD}				% discrete source term vector
\newcommand{\GalC}{\biggreekstatevec{\phi}^\supGLM}
\newcommand{\GalCs}{\statevecGreek{\phi}^\supGLM}
\newcommand{\GalCT}{\biggreekcontravec{\phi}^\supGLM}
\newcommand{\GalD}{\biggreekstatevec{\Phi}^\supGLM}	
\newcommand{\GalDs}{\statevecGreek{\Phi}^\supGLM}	
\newcommand{\GalDT}{\biggreekcontravec{\Phi}^\supGLM}
\newcommand{\wDotJan}{\theta}						% source term contracted with entropy variables
\newcommand{\dmat}{\matrixvec{D}}						% polynomial derivative matrix
\newcommand{\qmat}{\matrixvec{Q}}						% SBP matrix
\newcommand{\mmat}{\matrixvec{M}}					% mass matrix
\newcommand{\bmat}{\matrixvec{B}}						% boundary matrix
\newcommand{\viscosity}{\mu_{\mathrm{NS}}}      	      				% kinematic viscosity for the Navier-Stokes equations
\newcommand{\resistivity}{\mu_{\mathrm{R}}}						% resistivity for the MHD equations
\newcommand{\Bn}{B_n}		% "numerical flux" of magnetic field at the interface
\newcommand{\GalDn}{\GalDs_n}
\newcommand{\Bstar}{\left(\JanD B_n \right){\!\!}^{\Diamond}}		% "numerical flux" of magnetic field at the interface
\newcommand{\Psistar}{\left(\GalDs_n \psi\right){\!\!}^{\Diamond}}		% "numerical flux" of psi cleaning variable at the interface
\newcommand{\interpolant}[1]{\mathbb{I}^N\!\!\left(#1\right)}	% nicely formatted polynomial interpolant of a quantity
\newcommand{\testfuncOne}{\statevecGreek{\varphi}}  
\newcommand{\testfuncTwo}{\biggreekstatevec{\vartheta}}
\newcommand{\DD}{\change{\spacevec{\mathbb{D}}\cdot}}
\newcommand{\DDs}{\change{\spacevec{\mathbb{D}}^{\mathrm{S}}\cdot}}
\newcommand{\DDncdiv}{\change{\spacevec{\mathbb{D}}^{\mathrm{NC}}_{\mathrm{div}}\cdot}}
\newcommand{\DDncgrad}{\change{\spacevec{\mathbb{D}}^{\mathrm{NC}}_{\mathrm{grad}}}}
\theoremstyle{plain}
\newtheorem{thm}{Theorem}
\newtheorem{lem}{Lemma}
\newtheorem{cor}{Corollary}
\theoremstyle{remark}
\newtheorem{rem}{Remark}
\newcommand{\el}{\nu}
\newcommand\interiorfaces{{\mathrm{faces}}}
\begin{document}

\begin{frontmatter}

%\title{An entropy stable nodal discontinuous Galerkin method for the resistive MHD equations. Part I: Theory and Numerical Validation}
\title{An entropy stable nodal discontinuous Galerkin method for the resistive MHD equations. Part I: Theory and Numerical Verification}
% should the title also have GLM and curvilinear in it
\author[mathematik]{Marvin Bohm\corref{correspondingauthor}}
\cortext[correspondingauthor]{Corresponding author}
\ead{mbohm@math.uni-koeln.de}
\author[mathematik]{Andrew R.~Winters}
\author[mathematik]{Gregor J.~Gassner}
\author[physik]{Dominik Derigs}
\author[max]{Florian Hindenlang}
\author[geophysik]{Joachim Saur}
%\author[physik]{Stefanie Walch}

\address[mathematik]{Mathematisches Institut, Universit\"at zu K\"oln, Weyertal 86-90, 50931 K\"oln}
\address[physik]{I.\,Physikalisches Institut, Universit\"at zu K\"oln, Z\"ulpicher Stra\ss{}e~77, 50937 K\"oln}
\address[max]{Max-Planck Institut f\"ur Plasmaphysik, Boltzmannstra\ss{}e 2, 85748 Garching}
\address[geophysik]{Institut f\"ur Geophysik und Meteorologie, Universit\"at zu K\"oln, Pohligstra\ss{}e 3, 50969 K\"oln}

\numberwithin{equation}{section}

\begin{keyword}
resistive magnetohydrodynamics \sep entropy stability \sep discontinuous Galerkin spectral element method \sep hyperbolic divergence cleaning \sep curvilinear hexahedral mesh \sep summation-by-parts 
 \end{keyword}

\begin{abstract}
The first paper of this series presents a discretely entropy stable discontinuous Galerkin (DG) method for the resistive magnetohydrodynamics (MHD) equations on three-dimensional curvilinear unstructured hexahedral meshes. Compared to other fluid dynamics systems such as the shallow water equations or the compressible Navier-Stokes equations, the resistive MHD equations need special considerations because of the divergence-free constraint on the magnetic field. For instance, it is well known that for the symmetrization of the ideal MHD system as well as the continuous entropy analysis a non-conservative term proportional to the divergence of the magnetic field, typically referred to as the Powell term, must be included. %Likewise, for the continuous entropy analysis of the ideal MHD equations, the divergence-free constraint on the magnetic field components must be incorporated as a non-conservative term. 
As a consequence, the mimicry of the continuous entropy analysis in the discrete sense demands a suitable DG approximation of the non-conservative terms in addition to the ideal MHD terms. 

This paper focuses on the \textit{resistive} MHD equations: Our first contribution is a proof that the resistive terms are symmetric and positive-definite when formulated in entropy space as gradients of the entropy variables, which enables us to show that the entropy inequality holds for the resistive MHD equations. This continuous analysis is the key for our DG discretization and guides the path for the construction of an approximation that discretely mimics the entropy inequality, typically termed \textit{entropy stability}. Our second contribution is a detailed derivation and analysis of the discretization on three-dimensional curvilinear meshes. The discrete analysis relies on the summation-by-parts property, which is satisfied by the DG spectral element method (DGSEM) with Legendre-Gauss-Lobatto (LGL) nodes. Although the divergence-free constraint is included in the non-conservative terms, the resulting method has no particular treatment of the magnetic field divergence errors, which might pollute the solution quality. Our final contribution is the extension of the standard resistive MHD equations and our DG approximation with a divergence cleaning mechanism that is based on a generalized Lagrange multiplier (GLM). 

As a conclusion to the first part of this series, we provide detailed numerical validations of our DGSEM method that underline our theoretical derivations. In addition, we show a numerical example where the entropy stable DGSEM demonstrates increased robustness compared to the standard DGSEM. 
\end{abstract}

\end{frontmatter}

%%%%%%%%%%%%%%%%%%%%%%%%%%%%%%%%%%%%%%%%%%%%%%%%%%%%%%%%%%%%%%%%%%%%%%%%
\section{Introduction}\label{Sec:Intro}
%%%%%%%%%%%%%%%%%%%%%%%%%%%%%%%%%%%%%%%%%%%%%%%%%%%%%%%%%%%%%%%%%%%%%%%%

The resistive magnetohydrodynamic (MHD) equations are of great interest in many areas of plasma, space and astrophysics. This stems from a wide range of applications such as electromagnetic turbulence in conducting fluids, magnetically confined fusion for power generation, modeling the action of dynamos and predicting the interaction of the solar wind with planets or moons. The governing equations are able to describe both dense and thin plasmas that are time-dependent and include motions with a wide range of temporal and spatial scales, e.g., compressible MHD turbulence. In addition, the resistive MHD equations exhibit a mixed hyperbolic-parabolic character depending on the strength of the viscous and resistive effects. Another important property, in a closed physical system, is the second law of thermodynamics, i.e., the evolution of the entropy. In the absence of resistivity and viscosity, that is for the ideal MHD model, and for smooth solutions, the entropy of the system is an additional conserved quantity, although not explicitly built into the mathematical model. Further, in the presence of shocks, the second law of thermodynamics becomes the entropy inequality, e.g. \cite{harten1983}, which guarantees that entropy is always dissipated with the correct sign. It is assumed that the additional resistive terms have a pure entropy dissipative effect as well. But, to the best of our knowledge, no continuous entropy analysis of the resistive MHD equations has been presented in the literature yet and it is unclear if the entropy inequality holds for the resistive MHD equations. Thus, our first contribution in this work is to complete the continuous entropy analysis for the resistive MHD equations. A complication regarding this analysis of MHD models is the involution, that is, the divergence-free constraint of the magnetic field \cite{Barth1999,Godunov1972}
\begin{equation}\label{eq:divBIntro}
\spacevec{\nabla}\cdot\spacevec{B} = 0.
\end{equation}
The condition \eqref{eq:divBIntro} is an additional partial differential equation (PDE) not explicitly built into the resistive MHD equations similar to the entropy inequality. However, it is well known that an additional non-conservative PDE term proportional to the divergence-free constraint is necessary for the entropy analysis of the ideal MHD equations, see e.g. Godunov \cite{Godunov1972}. There are different variants in how to construct such non-conservative terms, e.g. 
Brackbill and Barnes \cite{Brackbill1980}, Powell \cite{Powell1999} and Janhunen \cite{Janhunen2000}. On the continuous level, adding a non-conservative term scaled by \eqref{eq:divBIntro} is a clever way of adding zero to the model. However, for numerical approximations, there are known stability and accuracy issues that differ between the three types of non-conservative terms \cite{Sjogreen2017}.

Mimicking the continuous entropy analysis in the discrete sense is a promising way to enhance the robustness of the resulting numerical approximation. A numerical scheme that satisfies a discrete entropy inequality is often referred to as an entropy stable scheme. Note that, entropy stability is insufficient to give strict non-linear stability, as the continuous and the discrete entropy analysis both assume positivity of the solution and its approximation, respectively. For instance, this assumption can break when simulating strong shocks with a high-order entropy stable method. It is an ongoing research focus how to extend entropy stable high order schemes to full non-linear stability.  However, in practice, entropy stable high order discontinuous Galerkin (DG) schemes show enhanced robustness compared to their standard variants for fluid dynamics problems with weak shocks and especially for compressible (under-resolved) turbulence, e.g. \cite{Carpenter2016,carpenter_esdg,fisher2013,wintermeyer2017,Gassner:2016ye,Chen2017,Ray2017}, as entropy stability provides the desired in-built de-aliasing. At the end of this work, we will show that these positive properties carry over to magnetized fluid dynamics. Entropy stable methods for ideal MHD equations have been studied by many authors, e.g. \cite{Barth1999,Rossmanith2013,Winters2017,Winters2016,Derigs2016,Chandrashekar2015}. We note that there is recent work on entropy stable DG methods applied to the ideal MHD equations by Rossmanith \cite{Rossmanith2013}, Gallego-Valencia \cite{Valencia2017} and most notably the recent work by Liu et al. \cite{Liu2017}, who introduced an entropy stable DG discretization on Cartesian meshes of the non-conservative PDE term proportional to the divergence-free constraint. Our second contribution is an alternative explanation of the entropy stable discretization of the non-conservative term and its extension to fully three-dimensional curvilinear unstructured hexahedral meshes. The geometric flexibility offered by unstructured curvilinear meshes is needed to decompose, e.g., a domain around a spherical object without singularities \cite{duling2014} or a torus-shaped Tokamak reactor \cite{Hindenlang2016}. Herein we consider a nodal discontinuous Galerkin method on unstructured hexahedral grids, as it is able to handle curved elements in a natural way while providing high computational efficiency \cite{Hindenlang2015}. The key to \textit{discrete} entropy stability on curvilinear meshes is to mimic the integration-by-parts property with the DG operators and satisfy the metric identities. This enables the construction of DG methods that are entropy stable without the assumption of exact evaluation of the variational forms. Discrete integration-by-parts, or summation-by-parts (SBP), is naturally obtained when using the Legendre-Gauss-Lobatto (LGL) nodes in the nodal DG approximation \cite{gassner_skew_burgers}. Furthermore, recent work showed that it is possible to construct nodal DG discretizations with LGL nodes that are discretely entropy stable for the viscous terms of the Navier-Stokes equations when using the gradients of the entropy variables instead of the gradients of the conservative or primitive variables \cite{carpenter_esdg,Gassner2017}. We use our first contribution in this paper, the proof that the coefficient matrices of the resistive terms are symmetric and positive semi-definite in terms of the entropy gradients, in order to apply the proof presented in Gassner et al. \cite{Gassner2017}, yielding the result that a Bassi-Rebay (BR1) type scheme  \cite{Bassi&Rebay:1997:B&F97} is entropy stable for the resistive MHD equations. 

As noted, an important difference to the construction of entropy stable DG schemes for non-magnetized fluid dynamics is the necessity to include the divergence-free constraint as a non-conservative PDE term for the continuous and discrete entropy analysis. %Technically, for the discrete entropy stability, the divergence of the magnetic field could be arbitrary. 
However, it is well known in the MHD numerics community that even if the initial conditions of a problem satisfy \eqref{eq:divBIntro}, it is not guaranteed that the discrete evolution of the magnetized fluid will remain divergence-free in the magnetic field without additional mechanisms. Therefore, many numerical techniques have been devised to control errors introduced into the divergence-free constraint by a numerical discretization. These include the projection approach described e.g. in Brackbill and Barnes \cite{Brackbill1980}, the method of constrained transport introduced by Evans and Hawley \cite{Evans1988} and the generalized Lagrange multiplier (GLM) hyperbolic divergence cleaning technique originally proposed for the ideal MHD equations by Dedner et al. \cite{Dedner2002}. A thorough review of the behavior and efficacy of  these techniques, except hyperbolic divergence cleaning, is provided by  e.g. T\'{o}th \cite{Toth2000}. Due to its relative ease of implementation and computational efficiency we are most interested in the method of hyperbolic divergence cleaning. However, the current work is also concerned with constructing entropy stable numerical approximations. Recent work by Derigs et al. \cite{Derigs2017} modified the additional GLM divergence cleaning system in such a way that the resulting ideal GLM-MHD system is consistent with the continuous entropy analysis and provides in-built divergence cleaning capabilities. The novel entropy stable GLM-MHD system in \cite{Derigs2017} includes the Powell non-conservative term and a non-conservative GLM term in the energy equation, which is necessary for Galilean invariance. Our third contribution focuses on the extension of the entropy stable discontinuous Galerkin spectral element method (DGSEM) for the resistive MHD equations to include \change{this variant} of the entropy stable GLM-MHD system. The resulting DGSEM method is high order accurate, discretely entropy stable on curvilinear elements and has an inbuilt GLM divergence cleaning mechanism. 

The remainder of this paper is organized as follows: In Sec.~\ref{Sec:Systems}, the continuous entropy analysis of the three-dimensional resistive GLM-MHD equations is presented, which demonstrates that the model indeed satisfies the entropy inequality and that the resistive terms can be recast into a symmetric and positive semi-definite form. Next, we introduce the DGSEM on curvilinear hexahedral elements in Sec.~\ref{Sec:DG}. Furthermore, we prove the entropy stability of the numerical approximation in Sec.~\ref{Sec:ESDGres} by discretely mimicking the continuous entropy analysis with special attention given to the metric terms, GLM divergence cleaning and the resistive terms. Finally, in Sec.~\ref{Sec:NumVer} we validate the theoretical findings by numerical tests and demonstrate the increased robustness of the scheme. The final section contains concluding remarks.

%%%%%%%%%%%%%%%%%%%%%%%%%%%%%%%%%%%%%%%%%%%%%%%%%%%%%%%%%%%%%%%%%%%%%%%%
\section{Continuous entropy analysis}\label{Sec:Systems}
%%%%%%%%%%%%%%%%%%%%%%%%%%%%%%%%%%%%%%%%%%%%%%%%%%%%%%%%%%%%%%%%%%%%%%%%

 In general, we consider systems of conservation laws in a domain $\Omega \subset \mathbb{R}^3$ defined as
\begin{equation}
 \statevec{u}_t + \vec{\nabla} \cdot \bigstatevec{f} = \statevec{0},
\label{consLaw}
\end{equation}  
where $\statevec{u}$ denotes the vector of conserved variables and $\bigstatevec{f}$ the multidimensional flux vector. These definitions allow for a compact notation that will simplify the analysis, i.e., we define block vectors with the double arrow as
\begin{equation}\label{eq:firstBlockVector}
\bigstatevec{f} =
 \left[ {\begin{array}{*{20}{c}}
  {{\statevec f_1}} \\ 
  {{\statevec f_2}} \\ 
  {{\statevec f_3}} 
\end{array}} 
\right],
\end{equation}
and the spatial gradient of a state as
\begin{equation}
\spacevec{\nabla} \statevec u = \left[ {\begin{array}{*{20}{c}}
\statevec{u}_x\\
\statevec{u}_y\\
\statevec{u}_z
\end{array}}
\right].
\end{equation}
The gradient of a spatial vector is a second order tensor, written in matrix form as
\begin{equation}
\spacevec{\nabla}\spacevec{v} =\left[ {\begin{array}{*{20}{ccc}}
\pderivative{v_1}{x} & \pderivative{v_1}{y} & \pderivative{v_1}{z} \\[0.2cm]
\pderivative{v_2}{x} & \pderivative{v_2}{y} & \pderivative{v_2}{z} \\[0.2cm]
\pderivative{v_3}{x} & \pderivative{v_2}{y} & \pderivative{v_3}{z} \\[0.2cm]
\end{array}}
\right]
\end{equation}
and the divergence of a flux written as a block vector is defined as
\begin{equation}
\spacevec\nabla  \cdot \bigstatevec f = \left(\statevec{f}_1\right)_{\!x} + \left(\statevec{f}_2\right)_{\!y} + \left(\statevec{f}_3\right)_{\!z}.
\end{equation}
The dot product of two block vectors is defined by
\begin{equation}
\bigstatevec f \cdot \bigstatevec g = \sum\limits_{i = 1}^3 {{{\statevec f}_i}^T{{\statevec g}_i}},
\end{equation}
and the dot product of a block vector with a spatial vector is a state vector
\begin{equation} 
\spacevec g\cdot\bigstatevec f  = \sum\limits_{i = 1}^3 {{{ g}_i}{{\statevec f}_i}}. 
\end{equation}

%%%%%%%%%%%%%%%%%%%%%%%%%%%%%%%%%%%%%%%%%%%%%%%%%%%%%%%%%%%%%%%%%%%%%%%%
\subsection{Resistive GLM-MHD equations}\label{Sec:ResistiveMHD}
%%%%%%%%%%%%%%%%%%%%%%%%%%%%%%%%%%%%%%%%%%%%%%%%%%%%%%%%%%%%%%%%%%%%%%%%

The equations that govern the evolution of resistive, conducting fluids depend on the solution as well as its gradient \cite{dumbser2009,fambri2017,Warburton1999}. We outline the variant of the resistive GLM-MHD equations that is consistent with the continuous entropy analysis as developed in Derigs et al. \cite{Derigs2017}. The normalized resistive GLM-MHD equations defined in the domain $\Omega \subset \mathbb{R}^3$ read as
\begin{equation}
 \statevec{u}_t + \vec{\nabla} \cdot \bigstatevec{f}^a(\statevec{u}) - \vec{\nabla} \cdot \bigstatevec{f}^v(\statevec{u},\vec{\nabla} \statevec{u})  + \change{\noncon} = \statevec{r}
\label{resGLMMHD}
\end{equation} 
with the state vector $\statevec{u} = (\rho, \rho \spacevec{v}, E, \spacevec{B}, \psi )^T$ and the advective flux, which we split into three terms, the Euler, ideal MHD and GLM contributions,
\begin{equation}
\bigstatevec{f}^a(\statevec{u}) = \bigstatevec{f}^{a,\supEuler} +\bigstatevec{f}^{a,\supMHD}+\bigstatevec{f}^{a,\supGLM}=
% \begin{pmatrix} 
% \rho \spacevec{v} \\[0.15cm]
% \rho (\spacevec{v}\, \spacevec{v}^{\,T}) + \left(p+\frac{1}{2} \|\spacevec{B}\|^2\right) \threeMatrix{I} - \spacevec{B} \spacevec{B}^T \\[0.15cm]
% \spacevec{v}\left(\frac{1}{2}\rho \left\|\spacevec{v}\right\|^2 + \frac{\gamma p}{\gamma -1} + \|\spacevec{B}\|^2\right) - \spacevec{B}\left(\spacevec{v}\cdot\spacevec{B}\right) + c_h \psi \spacevec{B} \\[0.15cm]
% \spacevec{v}\,\spacevec{B}^T - \spacevec{B}\,\spacevec{v}^{\,T} + c_h \psi \threeMatrix{I} \\ c_h \spacevec{B}\\[0.15cm]
% \end{pmatrix} 
\begin{pmatrix} 
\rho \spacevec{v} \\[0.15cm]
\rho (\spacevec{v}\, \spacevec{v}^{\,T}) + p\threeMatrix{I} \\[0.15cm]
\spacevec{v}\left(\frac{1}{2}\rho \left\|\spacevec{v}\right\|^2 + \frac{\gamma p}{\gamma -1}\right)  \\[0.15cm]
\threeMatrix{0}\\ \spacevec{0}\\[0.15cm]
\end{pmatrix} +
\begin{pmatrix} 
\spacevec{0} \\[0.15cm]
\frac{1}{2} \|\spacevec{B}\|^2 \threeMatrix{I} - \spacevec{B} \spacevec{B}^T \\[0.15cm]
\spacevec{v}\,\|\spacevec{B}\|^2 - \spacevec{B}\left(\spacevec{v}\cdot\spacevec{B}\right) \\[0.15cm]
\spacevec{v}\,\spacevec{B}^T - \spacevec{B}\,\spacevec{v}^{\,T} \\ \spacevec{0}\\[0.15cm]
\end{pmatrix} +
\begin{pmatrix} 
\spacevec{0} \\[0.15cm]
\threeMatrix{0} \\[0.15cm]
c_h \psi \spacevec{B} \\[0.15cm]
c_h \psi \threeMatrix{I} \\ c_h \spacevec{B}\\[0.15cm]
\end{pmatrix} 
\label{eq:advective_fluxes}
\end{equation}
and the viscous flux
\begin{equation}
\bigstatevec{f}^v(\statevec{u},\vec{\nabla} \statevec{u})  = 
\begin{pmatrix} 
\spacevec{0} \\[0.15cm] 
\threeMatrix{\tau} \\[0.15cm]
\threeMatrix{\tau}\spacevec{v} -\vec{\nabla} T - \resistivity \left( (\vec{\nabla} \times \spacevec{B} ) \times \spacevec{B}\right) \\[0.15cm]
\resistivity \left( (\vec{\nabla} \spacevec{B} )^T - \vec{\nabla} \spacevec{B}\right)\\[0.15cm]
\spacevec{0} \\[0.15cm]
\end{pmatrix}\,.
\label{eq:viscous_fluxes}
\end{equation}
Here, $\rho,\,\spacevec{v}=(v_1,v_2,v_3)^T,\,p,\,E$ are the mass density, fluid velocities, pressure and total energy, respectively, $\spacevec{B} = (B_1,B_2,B_3)^T$ denotes the magnetic field components and $\threeMatrix{I}$ the $3\times 3$ identity matrix. Furthermore, the viscous stress tensor reads \cite{kundu2008}
\begin{equation}
\threeMatrix{\tau} = \viscosity ((\vec{\nabla} \spacevec{v}\,)^T + \vec{\nabla} \spacevec{v}\,) - \frac{2}{3} \viscosity(\vec{\nabla} \cdot \spacevec{v}\,) \threeMatrix{I},
\label{stress}
\end{equation}
and the heat flux is defined as
\begin{equation}
\vec{\nabla} T = -\kappa \vec{\nabla} \left(\frac{p}{R \rho}\right).
\label{heat}
\end{equation}
The introduced constants $\viscosity,\resistivity,\kappa,R > 0$ describe the viscosity from the Navier-Stokes equations, resistivity of the plasma, thermal conductivity and the universal gas constant, respectively. In particular, the constants $\viscosity$ and $\resistivity$ are first-order transport coefficients that describe the kinematic viscosity and the diffusivity of the magnetic field \cite{woods1993}. We close the system with the ideal gas assumption, which relates the total energy and pressure 
\begin{equation}
p = (\gamma-1)\left(E - \frac{1}{2}\rho\left\|\spacevec{v}\right\|^2 - \frac{1}{2}\|\spacevec{B}\|^2 - \frac{1}{2}\psi^2\right),
\label{eqofstate}
\end{equation}
where $\gamma$ denotes the adiabatic coefficient. 

The system \eqref{resGLMMHD} contains the GLM extension indicated by the additional field variable $\psi$, which controls the divergence error by propagating it through the physical domain with the wave speed $c_h$, away from its source. \change{We also introduce a non-conservative term and a source term. The non-conservative term  $\noncon$ is related to the thermodynamic properties of \eqref{resGLMMHD} as it ensures the entropy conservation for the advective part of the system outlined in the next section. In particular, we split the non-conservative term into two parts $\noncon = \noncon^\supMHD + \noncon^\supGLM$ with
%\begin{equation}\label{Powell}
%\noncon^\supMHD = (\spacevec{\nabla} \cdot \spacevec{B}) \JanC = (\spacevec{\nabla} \cdot \spacevec{B}) \begin{pmatrix} 0 \\[0.15cm] \spacevec{B}\\[0.15cm] \spacevec{v}\cdot\spacevec{B} \\[0.15cm] \spacevec{v} \\[0.15cm] 0 \end{pmatrix}, ~~~~~~ \noncon^\supGLM = \GalC \cdot \spacevec{\nabla} \psi = \begin{pmatrix} \GalCs_1 \\[0.15cm] \GalCs_2\\[0.15cm] \GalCs_3 \\[0.15cm] \end{pmatrix} \cdot \spacevec{\nabla} \psi,
%\end{equation}
\begin{align}
\noncon^\supMHD &= (\spacevec{\nabla} \cdot \spacevec{B}) \JanC =  \left(\spacevec{\nabla} \cdot \spacevec{B}\right) 
\left( 0 \,,\, B_1\,,\,B_2\,,\,B_3\,,\, \spacevec{v}\cdot\spacevec{B} \,,\,  v_1\,,\,v_2\,,\,v_3 \,    ,\, 0 \right)^T \,, \label{Powell}\\ 
\noncon^\supGLM &= \GalC \cdot \spacevec{\nabla} \psi \quad =   \GalCs_1 \,\frac{\partial \psi}{\partial x} + \GalCs_2 \frac{\partial \psi}{\partial y} + \GalCs_3 \frac{\partial \psi}{\partial z} \,,\label{NC_GLM}
\end{align}
where $\GalC$ again is a $27$ block vector with
\begin{equation}\label{Galilean}
%\GalCs_\ell = \begin{pmatrix} 0 \\[0.15cm] \spacevec{0} \\[0.15cm] v_\ell \psi \\[0.15cm] \spacevec{0} \\[0.15cm] v_\ell \\[0.15cm] \end{pmatrix}, \quad \ell = 1,2,3.
\GalCs_\ell = \left(0 \,,\, 0\,,\,0\,,\,0\,,\, v_\ell \psi \,,\, 0\,,\,0\,,\,0\,,\, v_\ell \right)^T, \quad \ell = 1,2,3.
\end{equation} 
%\begin{equation}\label{Powell}
%\noncon^\supMHD = (\nabla \cdot \spacevec{B}) \JanC = (\nabla \cdot \spacevec{B}) \begin{pmatrix} 0 \\[0.15cm] \spacevec{B}\\[0.15cm] \spacevec{v}\cdot\spacevec{B} \\[0.15cm] \spacevec{v} \\[0.15cm] 0 \end{pmatrix}, ~~~~~~~~ \noncon^\supGLM = \GalC \nabla \psi = \left[\GalC_1|\GalC_2|\GalC_3 \right] \nabla \psi =  \begin{pmatrix} 0 & 0 & 0 \\[0.15cm] 0 & 0 & 0 \\[0.15cm] 0 & 0 & 0 \\[0.15cm] 0 & 0 & 0 \\[0.15cm] v_1 \psi & v_2 \psi & v_3 \psi \\[0.15cm] 0 & 0 & 0 \\[0.15cm] 0 & 0 & 0 \\[0.15cm] 0 & 0 & 0 \\[0.15cm]  v_1 & v_2 & v_3 \end{pmatrix} \nabla \psi.
%\end{equation}
As presented in \cite{Derigs2017}, the first non-conservative term $\noncon^\supMHD$ is the well-known Powell term \cite{Powell1999}, and the second term $\noncon^\supGLM$ results from Galilean invariance of the full GLM-MHD system \cite{Derigs2017}. We note, that both terms are zero in the continuous case and thus \eqref{resGLMMHD} reduces to the original resistive MHD equations.}

The second purely algebraic source term $\statevec{r}$ on the right hand side of the resistive GLM-MHD system solely provides additional damping of the divergence error \cite{Dedner2002,Derigs2017}, if desired, and is
\begin{equation}\label{damping}
\statevec{r} = \left(0 \,,\, 0\,,0\,,\,0\,,\, 0 \,,\,0\,,\,0\,,\,0\,,\, -\alpha \psi \right)^T\,,
\end{equation}
with $\alpha\geq 0$. 

%%%%%%%%%%%%%%%%%%%%%%%%%%%%%%%%%%%%%%%%%%%%%%%%%%%%%%%%%%%%%%%%%%%%%%%%
\subsection{Thermodynamic properties of the system}\label{Sec:ContEntr}
%%%%%%%%%%%%%%%%%%%%%%%%%%%%%%%%%%%%%%%%%%%%%%%%%%%%%%%%%%%%%%%%%%%%%%%%

In order to discuss the thermodynamic properties of the resistive GLM-MHD equations \eqref{resGLMMHD} we translate the concepts of the first and second law of thermodynamics into a mathematical context. To do so, we first exclusively examine the advective and non-conservative term proportional to \eqref{eq:divBIntro}. The ideal GLM-MHD equations satisfy the first law of thermodynamics, because the evolution of the total fluid energy is one of the conserved quantities. This is true for any choice of the vector $\noncon$ because \eqref{eq:divBIntro} is assumed to hold in the continuous analysis. But, on the discrete level, this is not the case as noted by many authors \cite{Derigs2017,Powell1999,Janhunen2000,Toth2000}. However, the mathematical description of the second law of thermodynamics is more subtle, because the entropy is not explicitly built into the system. Thus, we require more formalism and utilize the well-developed entropy analysis tools for hyperbolic systems, e.g. \cite{harten1983,tadmor1984,mock1980}. As such, we define a strongly convex entropy function that is then used to define an injective mapping between state space and entropy space. Note that we adopt the mathematical notation of a negative entropy function as is often done, e.g. \cite{harten1983,Tadmor2003}. Once the advective terms are accounted for in entropy space, we present the first main contribution of this work: The resistive terms are indeed consistent with the second law of thermodynamics. 

For the ideal and the resistive GLM-MHD equations, a suitable entropy function is the thermodynamical entropy density divided by the constant $(\gamma-1)$ for convenience
\begin{equation}
S(\statevec{u}) = - \frac{\rho s}{\gamma-1} ~~ \text{ with } ~~ s = \ln\left(p \rho^{-\gamma}\right),
\label{entropy}
\end{equation}
where $s$ is the thermodynamic entropy \cite{Landau1959} with the physical assumptions $\rho,p>0$.  From the entropy function we define the entropy variables to be
\begin{equation}
\statevec{w} = \frac{\partial S}{\partial \statevec{u}} = \left(\frac{\gamma-s}{\gamma-1} - \beta\left\|\spacevec{v}\right\|^2,~2\beta v_1,~2\beta v_2,~2\beta v_3,~-2\beta,~2\beta B_1,~2\beta B_2,~2\beta B_3,~2\beta \psi\right)^T
\label{entrvars}
\end{equation}
with $\beta = \frac{\rho}{2p}$, which is proportional to the inverse temperature.

For smooth solutions, it is known that when we contract the ideal GLM-MHD equations without viscous fluxes nor any source term $\statevec{r}$ on the right hand side by the entropy variables \eqref{entrvars} we obtain the entropy conservation law \cite{Derigs2017}
\begin{equation}\label{eq:PDEcontracted}
 \frac{\partial S}{\partial t} + \vec{\nabla}\cdot\spacevec{f}^{\,\ent} = 0,
\end{equation}
where the entropy fluxes are defined as
\begin{equation}
\spacevec{f}^{\,\ent} = \spacevec{v}S.
\end{equation}

Additionally, as it will be necessary in later derivations and the proof of discrete entropy stability, we define the entropy flux potential to be
\begin{equation}\label{eq:entPotential}
\spacevec{\Psi} := \statevec{w}^T\bigstatevec{f}^a - \spacevec{f}^{\,\ent}+\wDotJan \spacevec{B},
\end{equation}
where we introduce notation for the \change{Powell term from \eqref{Powell} contracted into entropy space, which is
\begin{equation}\label{eq:contractSource}
\wDotJan = \statevec{w}^T\JanC = 2\beta(\spacevec{v}\cdot\spacevec{B}). 
\end{equation}
We note, that the GLM part of the non-conservative term \change{in \eqref{NC_GLM} cancels} internally, when contracted in entropy space, i.e.
\begin{equation}\label{eq:contractSourceGLM}
\statevec{w}^T\noncon^{\supGLM} = \statevec{w}^T\GalC\cdot\spacevec{\nabla}\psi = \spacevec{0}\cdot\spacevec{\nabla}\psi = 0. 
\end{equation}}

\begin{rem}
As introduced in \eqref{eq:advective_fluxes}, we split the advective flux part into three terms to simplify the derivations and keep track of the individual contributions. This will be especially useful in the discrete entropy analysis, where we aim to mimic the continuous derivations. Hence, we analogously split the total entropy flux potential $\spacevec{\Psi}$ into Euler, ideal MHD and GLM components
\begin{equation}\label{eq:entPotential2}
\spacevec{\Psi} = \spacevec{\Psi}^{\supEuler} + \spacevec{\Psi}^{\supMHD} + \spacevec{\Psi}^{\supGLM},
\end{equation}
where
\begin{align}%\label{eq:entPotential3}
\spacevec{\Psi}^{\supEuler} &= \statevec{w}^T\bigstatevec{f}^{a,\supEuler} - \spacevec{f}^{\,\ent},\label{eq:EulerEntFluxPot}\\[0.125cm]
\spacevec{\Psi}^{\supMHD} &= \statevec{w}^T\bigstatevec{f}^{a,\supMHD} + \wDotJan \spacevec{B},\label{eq:MHDEntFluxPot}\\[0.125cm]
\spacevec{\Psi}^{\supGLM} &= \statevec{w}^T\bigstatevec{f}^{a,\supGLM}.\label{eq:GLMEntFluxPot}
\end{align}
\end{rem}

Furthermore, e.g. in case of shock discontinuities, the solution satisfies the following entropy inequality
\begin{equation} 
  S_t + \vec{\nabla}\cdot(\spacevec{v}S) \leq 0,
\label{entrineq}
\end{equation} 
which is the mathematical description of the second law of thermodynamics for the ideal GLM-MHD equations. Next, we account for the resistive terms to demonstrate the entropy behavior for the resistive GLM-MHD equations. To do so, we require a suitable representation of the resistive terms to discuss how they affect \eqref{entrineq}. \\ 

\begin{lem}[Entropy representation of viscous and resistive fluxes]\label{lemma1}~\newline
The viscous and resistive fluxes of the resistive GLM-MHD equations in \eqref{eq:viscous_fluxes} can be expressed by gradients of the entropy variables as
\begin{equation}
 \bigstatevec{f}^v(\statevec{u},\vec{\nabla} \statevec{u}) = \twentysevenMatrix{K}  \vec{\nabla} \statevec{w}
\label{Kvisc}
\end{equation}
with a block matrix $\twentysevenMatrix{K}\in\mathbb{R}^{27\times 27}$ that is symmetric and positive semi-definite, i.e,
\begin{equation}
\mathsf{q}^T\twentysevenMatrix{K}\mathsf{q}\geq 0,\quad\forall\mathsf{q}\in\mathbb{R}^{27}.
\end{equation}
\end{lem}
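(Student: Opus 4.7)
The strategy is to explicitly construct the matrix $\twentysevenMatrix{K}$ via the chain rule and to split it as $\twentysevenMatrix{K} = \twentysevenMatrix{K}^{\mathrm{NS}} + \twentysevenMatrix{K}^{\mathrm{R}}$. Here $\twentysevenMatrix{K}^{\mathrm{NS}}$ collects the Navier-Stokes contributions (viscous stress $\threeMatrix{\tau}$, the heat-flux term denoted by $\spacevec{\nabla} T$, and the $\threeMatrix{\tau}\spacevec{v}$ term in the energy equation), while $\twentysevenMatrix{K}^{\mathrm{R}}$ collects the two genuinely resistive terms, namely $-\resistivity\,(\spacevec{\nabla}\times\spacevec{B})\times\spacevec{B}$ in the energy equation and $\resistivity\,((\spacevec{\nabla}\spacevec{B})^T - \spacevec{\nabla}\spacevec{B})$ in the magnetic equations. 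The first block $\twentysevenMatrix{K}^{\mathrm{NS}}$ is already known to be symmetric and positive semi-definite from the analysis of the compressible Navier-Stokes equations in Gassner et al., so the real work is to exhibit the resistive block and verify its properties.

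To set up the explicit form, I would differentiate the defining relations $w_5 = -2\beta$, $w_{i+1} = 2\beta v_i$, $w_{i+5} = 2\beta B_i$ to obtain
\begin{equation*}
\spacevec{\nabla} B_i \;=\; \frac{1}{2\beta}\spacevec{\nabla} w_{i+5} \;+\; \frac{B_i}{2\beta}\spacevec{\nabla} w_5,
\end{equation*}
together with analogous relations for $\spacevec{\nabla} v_i$ and for the actual temperature gradient that sits inside $\spacevec{\nabla} T = -\kappa\spacevec{\nabla}(p/(R\rho))$. Substituting into \eqref{eq:viscous_fluxes} yields $\bigstatevec{f}^v = \twentysevenMatrix{K}\,\spacevec{\nabla}\statevec{w}$ with completely explicit $9\times 9$ sub-blocks $\twentysevenMatrix{K}_{ij}$ for $i,j\in\{1,2,3\}$. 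For the resistive part the nonzero slots lie only in the $(5,5)$, $(5,k+5)$, $(i+5,5)$ and $(i+5,k+5)$ positions, i.e.\ only the energy and magnetic-field entries couple.

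Symmetry of $\twentysevenMatrix{K}^{\mathrm{R}}$, meaning $(\twentysevenMatrix{K}^{\mathrm{R}}_{ij})^T = \twentysevenMatrix{K}^{\mathrm{R}}_{ji}$ across spatial indices, is the delicate point. The $(5,k+5)$ entry picks up a factor $B_k/(2\beta)$ from expanding the energy-equation contribution via the identity $(\spacevec{\nabla}\times\spacevec{B})\times\spacevec{B} = (\spacevec{B}\cdot\spacevec{\nabla})\spacevec{B} - \tfrac{1}{2}\spacevec{\nabla}\|\spacevec{B}\|^2$. The $(i+5,5)$ entry picks up the same factor $B_i/(2\beta)$ from the chain-rule correction $\tfrac{B_i}{2\beta}\spacevec{\nabla} w_5$ in $\spacevec{\nabla} B_i$ feeding the antisymmetric magnetic flux $\resistivity((\spacevec{\nabla}\spacevec{B})^T-\spacevec{\nabla}\spacevec{B})$. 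I would then check, index by index, that these two independently produced factors combine with the Levi-Civita structure to give identical coefficients in the two cross blocks, thereby establishing block-transpose symmetry.

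For positive semi-definiteness, I would contract with an arbitrary block vector $\mathsf{q}\in\mathbb{R}^{27}$. Interpreting the three $9$-dimensional sub-blocks of $\mathsf{q}$ as formal stand-ins for $\partial_x\statevec{w},\partial_y\statevec{w},\partial_z\statevec{w}$, the quadratic form $\mathsf{q}^T\twentysevenMatrix{K}\mathsf{q}$ decomposes into the familiar Navier-Stokes entropy dissipation plus a resistive contribution in which the cross-terms cancel and leave only the manifestly non-negative Ohmic term proportional to $\resistivity\|\spacevec{\nabla}\times\spacevec{B}\|^2$, now read off from the $\mathsf{q}$-components rather than genuine derivatives. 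Since $\twentysevenMatrix{K}$ depends only on $\statevec{u}$, this algebraic identity suffices to prove $\mathsf{q}^T\twentysevenMatrix{K}\mathsf{q}\ge 0$ for every $\mathsf{q}\in\mathbb{R}^{27}$. The hard part will be the symmetry step rather than the PSD step: the two independent mechanisms producing the energy--magnetic coupling must yield precisely matching coefficients, and while this is what ultimately makes the entropy framework close, verifying it requires careful component-wise bookkeeping of the Levi-Civita and chain-rule factors.
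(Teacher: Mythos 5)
Your proposal is correct and, for most of its length, retraces the paper's own path: the chain-rule relations for $\spacevec{\nabla}v_\ell$, $\spacevec{\nabla}B_\ell$, $\spacevec{\nabla}(p/\rho)$ in terms of $\spacevec{\nabla}\statevec{w}$, the explicit assembly of the nine $9\times 9$ blocks, the splitting $\twentysevenMatrix{K}=\twentysevenMatrix{K}^{\mathrm{NS}}+\twentysevenMatrix{K}^{\mathrm{R}}$, and quoting positive semi-definiteness of the Navier--Stokes block from the literature (the paper cites Dutt rather than Gassner et al., and it settles symmetry exactly the way you anticipate: by writing out all blocks in an appendix and checking $\nineMatrix{K}_{ij}=\nineMatrix{K}_{ji}^T$ by inspection, i.e.\ the same component bookkeeping you describe but actually carried out). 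The genuine divergence is the semi-definiteness of the resistive block: the paper computes the spectrum of $\twentysevenMatrix{K}^{\mathrm{R}}$ symbolically with a computer algebra system, obtaining the eigenvalues $0$, $2\resistivity p/\rho$, $\resistivity p\left(\|\spacevec{B}\|^2+2\right)/\rho$ with multiplicities $24,1,2$, whereas you argue through the quadratic form: after substituting the chain-rule relations, the cross terms between the $w_5$-gradient and the magnetic gradients coming from the energy flux and from the antisymmetric magnetic flux cancel exactly, leaving $\mathsf{q}^T\twentysevenMatrix{K}^{\mathrm{R}}\mathsf{q}=2\beta\resistivity\,\|\text{formal curl of }\spacevec{B}\|^2\geq 0$. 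This route is sound: the cancellation is pure pointwise algebra in the first-derivative placeholders (no equality of mixed partials is ever used), and since the pointwise map from primitive gradients to entropy gradients is an invertible linear map, every $\mathsf{q}\in\mathbb{R}^{27}$ is realized, so the identity proved "for gradients" is the identity for arbitrary $\mathsf{q}$; it only controls the symmetric part of $\twentysevenMatrix{K}^{\mathrm{R}}$, but you establish symmetry separately, as the lemma requires. What each approach buys: yours is CAS-free and makes the physical structure explicit -- the resistive entropy production is exactly the Ohmic term $2\beta\resistivity\|\spacevec{\nabla}\times\spacevec{B}\|^2$, which also explains why the resistive block has rank $3$ -- while the paper's eigenvalue computation delivers the explicit spectrum (consistent with your identity) at the price of symbolic computation. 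Two cautions: as in the paper, the decisive steps (symmetry and the exact cross-term cancellation) live in the component bookkeeping that you outline but do not execute; and the signs only close if the antisymmetric magnetic flux entry is read as $\resistivity(\partial_d B_i-\partial_i B_d)$ for component $i$ in direction $d$ -- with the transposed reading the "Ohmic" term comes out with the wrong sign, so this index convention must be fixed carefully before the bookkeeping is done.
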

%\enlargethispage{2cm}

\begin{proof}
We consider the viscous and resistive fluxes of the resistive GLM-MHD system in \eqref{eq:viscous_fluxes}
\begin{equation}
\bigstatevec{f}^v(\statevec{u},\vec{\nabla} \statevec{u}) = \left[\statevec{f}_1^v, \statevec{f}_2^v, \statevec{f}_3^v \right]^T\,.
\end{equation}
Using the vector of entropy variables from \eqref{entrvars} 
\begin{equation}
\statevec{w} = (w_1,\ldots,w_9)^T,
\end{equation}
we find the following relations:
\begin{align*}
\vec{\nabla} v_\ell  &= - \frac{1}{w_5} \vec{\nabla} w_{1+\ell} +  \frac{w_{1+\ell}}{w_5^2} \vec{\nabla} w_5 \,,\quad 
\vec{\nabla} B_\ell   = - \frac{1}{w_5} \vec{\nabla} w_{5+\ell} +  \frac{w_{5+\ell}}{w_5^2} \vec{\nabla} w_5 \,,\quad \ell=1,2,3\,,\\
\vec{\nabla} \left(\frac{p}{\rho}\right)  &= \frac{1}{w_5^2} \vec{\nabla} w_5 \,.
\end{align*}
With some algebraic effort we can determine the matrices $\nineMatrix{K}_{ij} \in \mathbb{R}^{9\times 9}, (i,j=1,2,3)$ to express the viscous fluxes in terms of matrices times the gradients of entropy variables:
\begin{align}
\statevec{f}_1^v & = \nineMatrix{K}_{11} \frac{\partial \statevec{w}}{\partial x} + \nineMatrix{K}_{12} \frac{\partial \statevec{w}}{\partial y} + \nineMatrix{K}_{13} \frac{\partial \statevec{w}}{\partial z} \label{fviscK1}\\
\statevec{f}_2^v & = \nineMatrix{K}_{21} \frac{\partial \statevec{w}}{\partial x} + \nineMatrix{K}_{22} \frac{\partial \statevec{w}}{\partial y} + \nineMatrix{K}_{23} \frac{\partial \statevec{w}}{\partial z} \\
\statevec{f}_3^v & = \nineMatrix{K}_{31} \frac{\partial \statevec{w}}{\partial x} + \nineMatrix{K}_{32} \frac{\partial \statevec{w}}{\partial y} + \nineMatrix{K}_{33} \frac{\partial \statevec{w}}{\partial z} \label{fviscK3}
\end{align}
We collect all these $9\times 9$ block matrices into the matrix $\twentysevenMatrix{K}\in\mathbb{R}^{27\times 27}$
\begin{equation}\label{eq:matrixK}
\twentysevenMatrix{K} = \begin{pmatrix}
\nineMatrix{K}_{11} & \nineMatrix{K}_{12} & \nineMatrix{K}_{13} \\[0.1cm]
\nineMatrix{K}_{21} & \nineMatrix{K}_{22} & \nineMatrix{K}_{23} \\[0.1cm]
\nineMatrix{K}_{31} & \nineMatrix{K}_{32} & \nineMatrix{K}_{33} \\[0.1cm]
\end{pmatrix},
\end{equation}
which clearly yields
\begin{equation}
 \bigstatevec{f}^v = \bigstatevec{f}^v(\statevec{u},\vec{\nabla} \statevec{u}) = \twentysevenMatrix{K}  \vec{\nabla} \statevec{w}.
\label{Kgradient}
\end{equation}
For clarification, we present the first matrix
\begin{equation}\label{eq:matK11}
\resizebox{0.925\hsize}{!}{$
\nineMatrix{K}_{11} = \frac{1}{w_5}\begin{pmatrix}
0 & 0 & 0 & 0 & 0 & 0 & 0 & 0 & 0 \\[0.15cm]
0 & -\frac{4\viscosity}{3} & 0 & 0 & \frac{4\viscosity w_2}{3w_5} & 0 & 0 & 0 & 0 \\[0.15cm]
0 & 0 & -\viscosity & 0 & \frac{\viscosity w_3}{w_5} & 0 & 0 & 0 & 0 \\[0.15cm]
0 & 0 & 0 & -\viscosity & \frac{\viscosity w_4}{w_5} & 0 & 0 & 0 & 0 \\[0.15cm]
0 & \frac{4\viscosity w_2}{3w_5} & \frac{\viscosity w_3}{w_5} & \frac{\viscosity w_4}{w_5} & -\frac{4\viscosity w_2^2}{3w_5^2}-\frac{\viscosity w_3^2}{w_5^2}-\frac{\viscosity w_4^2}{w_5^2}+\frac{\kappa}{Rw_5}-\frac{\resistivity w_7^2}{w_5^2}-\frac{\resistivity w_8^2}{w_5^2} & 0 & \frac{\resistivity w_7}{w_5} & \frac{\resistivity w_8}{w_5} & 0 \\[0.15cm]
0 & 0 & 0 & 0 & 0 & 0 & 0 & 0 & 0 \\[0.15cm]
0 & 0 & 0 & 0 & \frac{\resistivity w_7}{w_5} & 0 & -\resistivity & 0 & 0 \\[0.15cm]
0 & 0 & 0 & 0 & \frac{\resistivity w_8}{w_5} & 0 & 0 & -\resistivity & 0 \\[0.15cm]
0 & 0 & 0 & 0 & 0 & 0 & 0 & 0 & 0 \\[0.15cm]
\end{pmatrix}.$}
\end{equation}
The other matrices $\nineMatrix{K}_{12},\ldots,\nineMatrix{K}_{33}$ are explicitly stated in \ref{Sec:DisMatrix}. It is straightforward to verify that the matrix $\twentysevenMatrix{K}$ is symmetric by inspecting the block matrices listed in \eqref{eq:matK11} and \eqref{eq:matK12} - \eqref{eq:matK33} where the following relationships hold
\begin{equation}\label{eq:symmMatrix}
\nineMatrix{K}_{11} = \nineMatrix{K}_{11}^T,\;\nineMatrix{K}_{22} = \nineMatrix{K}_{22}^T,\;\nineMatrix{K}_{33} = \nineMatrix{K}_{33}^T,\;\nineMatrix{K}_{12} = \nineMatrix{K}_{21}^T,\;\nineMatrix{K}_{13} = \nineMatrix{K}_{31}^T,\;\nineMatrix{K}_{23} = \nineMatrix{K}_{32}^T.
\end{equation}

To show that the matrix $\twentysevenMatrix{K}$ is positive semi-definite is more involved. We first note that it is possible to split the matrix \eqref{eq:matrixK} into the viscous terms associated with the Navier-Stokes equations and the resistive terms of the magnetic fields that arise in the resistive GLM-MHD equations. We exploit this fact and rewrite the total diffusion matrix into two pieces
\begin{equation}
\twentysevenMatrix{K} = \twentysevenMatrix{K}^{\text{NS}} + \twentysevenMatrix{K}^{\text{RMHD}},
\end{equation}
where all terms with $\viscosity$ are put in $\twentysevenMatrix{K}^{\text{NS}}$ and all terms with $\resistivity$ are in $\twentysevenMatrix{K}^{\text{RMHD}}$. 
It is easy to verify that the NS and RMHD block matrices are symmetric, as both satisfy \eqref{eq:symmMatrix}. A further convenience is that the Navier-Stokes part, $\twentysevenMatrix{K}^{\text{NS}}$, is known to be positive semi-definite \cite{Dutt1988}
\begin{equation}
\mathsf{q}^T\twentysevenMatrix{K}^{\text{NS}}\mathsf{q}\geq 0,\quad \forall\mathsf{q}\in\mathbb{R}^{27}.
\end{equation}
Thus, all that remains is to demonstrate that the additional resistive dissipation matrix, $\twentysevenMatrix{K}^{\text{RMHD}}$, is positive semi-definite. To do so, we examine the eigenvalues of the system. We use the computer algebra system Maxima \cite{maxima} to find an explicit expression of the eigenvalues to be
\begin{equation}\label{eq:newEVs}
\lambda^{\text{RMHD}}_0 = 0,\;\lambda^{\text{RMHD}}_1 = \frac{2\resistivity p}{\rho},\;\lambda^{\text{RMHD}}_2 = \frac{\resistivity p\left(\|\spacevec{B}\|^2+2\right)}{\rho},\qquad\textrm{multiplicity:}\;\{24,1,2\}.
\end{equation}
Under the physical assumptions that $p,\rho>0$ and $\resistivity\geq 0$ we see that the eigenvalues \eqref{eq:newEVs} of the matrix $\twentysevenMatrix{K}^{\text{RMHD}}$ are all non-negative. Hence, the block matrix $\twentysevenMatrix{K}$ is symmetric and positive semi-definite. 
\end{proof}

With the ability to rewrite the viscous fluxes as a linear combination of the entropy variable gradients, we can summarize our first result:

\begin{thm}[Entropy inequality for the resistive GLM-MHD equations]\label{theorem:1}~\newline
Solutions of the resistive GLM-MHD equations \eqref{resGLMMHD} with the non-conservative \change{terms \eqref{Powell}, \eqref{NC_GLM} and} $\alpha\geq 0$ in \eqref{damping} satisfy the entropy inequality
\begin{equation}
\int\limits_{\Omega}S_t\,\mathrm{dV}+\int\limits_{\partial \Omega}(\spacevec{f}^{\,\ent} \cdot \spacevec{n})-\statevec{w}^T(\bigstatevec{f}^v \cdot \spacevec{n})\dS\leq 0.
\end{equation} 
\end{thm}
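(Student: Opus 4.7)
The plan is to contract the system \eqref{resGLMMHD} from the left by $\statevec{w}^T$ and assemble a pointwise entropy balance before integrating over $\Omega$. By definition of the entropy variables, the temporal term collapses to $\statevec{w}^T\statevec{u}_t = S_t$. The algebraic damping source contributes $\statevec{w}^T\statevec{r} = -2\alpha\beta\psi^2$, which is non-positive since $\beta = \rho/(2p)>0$ and $\alpha\geq 0$. The GLM non-conservative term drops out immediately by \eqref{eq:contractSourceGLM}. For the advective flux together with the Powell term, I would invoke the continuous ideal-GLM-MHD entropy identity of \cite{Derigs2017} that is already encoded by the entropy flux potential \eqref{eq:entPotential}: for smooth states one has
\[
\statevec{w}^T\bigl(\spacevec{\nabla}\cdot\bigstatevec{f}^a\bigr) + \statevec{w}^T\noncon^{\supMHD} = \spacevec{\nabla}\cdot\spacevec{f}^{\,\ent},
\]
so the advective and Powell contributions collapse into the entropy flux divergence.

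For the viscous/resistive contribution, I would apply the componentwise product rule and then invoke Lemma~\ref{lemma1}. Writing $\bigstatevec{f}^v = (\statevec{f}_1^v,\statevec{f}_2^v,\statevec{f}_3^v)$ gives
\[
-\statevec{w}^T\spacevec{\nabla}\cdot\bigstatevec{f}^v
= -\spacevec{\nabla}\cdot\bigl(\statevec{w}^T\bigstatevec{f}^v\bigr) + \sum_{i=1}^{3}(\partial_{x_i}\statevec{w})^T\statevec{f}_i^v.
\]
Substituting the representation \eqref{fviscK1}--\eqref{fviscK3} the residual quadratic form becomes $\mathsf{q}^T\twentysevenMatrix{K}\mathsf{q}$, where $\mathsf{q} = \bigl((\partial_x\statevec{w})^T,(\partial_y\statevec{w})^T,(\partial_z\statevec{w})^T\bigr)^T\in\mathbb{R}^{27}$. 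By Lemma~\ref{lemma1} this is pointwise non-negative, which is precisely what makes the resistive fluxes entropy-dissipative rather than merely entropy-consistent.

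Collecting all contributions yields the local entropy balance
\[
S_t + \spacevec{\nabla}\cdot\spacevec{f}^{\,\ent} - \spacevec{\nabla}\cdot\bigl(\statevec{w}^T\bigstatevec{f}^v\bigr)
= -\mathsf{q}^T\twentysevenMatrix{K}\mathsf{q} - 2\alpha\beta\psi^2 \leq 0.
\]
Integrating over $\Omega$ and applying the divergence theorem to both spatial divergences, using that $\statevec{w}^T\bigl(\bigstatevec{f}^v\cdot\spacevec{n}\bigr) = \sum_i n_i\,\statevec{w}^T\statevec{f}_i^v$, produces exactly the boundary expression claimed in the statement with a non-positive volume residual. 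The only step that is more than bookkeeping is the cancellation of the advective contraction with the Powell term into $\spacevec{\nabla}\cdot\spacevec{f}^{\,\ent}$; here one must track the extra $\wDotJan\spacevec{B}$ piece built into the potential \eqref{eq:entPotential} and verify via \eqref{eq:contractSource} that $\spacevec{\nabla}\cdot(\wDotJan\spacevec{B})$ really absorbs $\statevec{w}^T\noncon^{\supMHD} = \wDotJan\,(\spacevec{\nabla}\cdot\spacevec{B})$ together with the leftover terms coming from $\statevec{w}^T\spacevec{\nabla}\cdot\bigstatevec{f}^{a,\supMHD}$, rather than merely rearranging them. Once this compatibility step is in place, the inequality follows directly from the non-negativity provided by Lemma~\ref{lemma1} and by the damping source.
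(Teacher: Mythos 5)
Your proposal is correct and follows essentially the same route as the paper: contract \eqref{resGLMMHD} with $\statevec{w}$, use the known ideal GLM-MHD cancellations (Euler part giving $\spacevec{\nabla}\cdot\spacevec{f}^{\,\ent}$, Powell and GLM non-conservative contributions vanishing), treat the viscous term by the product rule/integration by parts, and conclude with the positive semi-definiteness of $\twentysevenMatrix{K}$ from Lemma~\ref{lemma1} together with $\statevec{w}^T\statevec{r}=-2\alpha\beta\psi^2\leq 0$. The only cosmetic difference is that you form the pointwise entropy balance before integrating and lump the Euler/MHD/GLM advective identities into one, whereas the paper splits them and integrates before applying integration by parts; the content is identical.
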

\begin{proof}
We start by contracting the resistive GLM-MHD system \eqref{resGLMMHD} with the entropy variables: 
\begin{equation}
 \statevec{w}^T\statevec{u}_t + \statevec{w}^T\left(\vec{\nabla} \cdot \bigstatevec{f}^a(\statevec{u}) + \noncon\right) = \statevec{w}^T\vec{\nabla} \cdot \bigstatevec{f}^v(\statevec{u},\vec{\nabla} \statevec{u}) + \statevec{w}^T\statevec{r}.
\label{weakformES}
\end{equation} 
From the definition of the entropy variables we have
\begin{equation}
\statevec{w}^T\!\statevec{u}_t = \left(\frac{\partial S}{\partial\statevec{u}}\right)^T\!\!\statevec{u}_t  = S_t.
\label{eq:wsContraction}
\end{equation}
Next, for clarity, we separate the advective flux into Euler, ideal MHD and GLM parts
\begin{equation}
\bigstatevec{f}^a(\statevec{u}) = \bigstatevec{f}^{a,\supEuler}+\bigstatevec{f}^{a,\supMHD}+\bigstatevec{f}^{a,\supGLM}.
\end{equation}
The Euler terms generate the divergence of the entropy flux, e.g., \cite{harten1983}
\begin{equation}\label{eq:fluxContractionEuler}
\statevec{w}^T\!\left( \spacevec{\nabla}\cdot\bigstatevec{f}^{a,\supEuler}\right)= \spacevec\nabla\cdot\spacevec{f}^{\,\ent},
\end{equation}
the ideal MHD and non-conservative term cancel, e.g., \cite{Barth1999,Liu2017}
\change{
\begin{equation}\label{eq:fluxContractionMHD}
\statevec{w}^T\!\left( \spacevec{\nabla}\cdot\bigstatevec{f}^{a,\supMHD} + \noncon^\supMHD\right)= 0
\end{equation}
and the GLM terms vanish as shown in \cite{Derigs2017}
\begin{equation}\label{eq:fluxContractionGLM}
\statevec{w}^T\!\left( \spacevec{\nabla}\cdot\bigstatevec{f}^{a,\supGLM} + \noncon^\supGLM\right)= 0.
\end{equation}}
The damping source term for the GLM divergence cleaning is zero in all but its ninth component, so we see
\begin{equation}
\statevec{w}^T\statevec{r} = -2\alpha\beta\psi^2.
\end{equation}
We have
\begin{equation}
S_t + \vec{\nabla} \cdot \spacevec{f}^{\,\ent} = \statevec{w}^T\vec{\nabla} \cdot \bigstatevec{f}^v-2\alpha \beta \psi^2.
\end{equation} 
Our summation-by-parts DG discretization introduced later is based on a variational formulation of the problem and our goal is to mimic the continuous derivations in the discrete sense. Thus, we seek an integral statement of the continuous entropy inequality. To do so, we integrate over the domain $\Omega$, apply Gauss' law to the entropy flux divergence, and apply integration-by-parts to the viscous and resistive flux contributions to obtain
\begin{equation}\label{eq:appliedGaussLaw}
\int\limits_{\Omega}S_t\,\textrm{dV}+\int\limits_{\partial \Omega}(\spacevec{f}^{\,\ent} \cdot \spacevec{n})-\statevec{w}^T(\bigstatevec{f}^v \cdot \spacevec{n})\dS = -\int\limits_{\Omega}(\vec{\nabla}\statevec{w})^T\bigstatevec{f}^v\,\textrm{dV} -\int\limits_{\Omega}2\alpha \beta \psi^2\,\textrm{dV} .
\end{equation} 
Using the representation of the viscous flux in entropy variable gradients from Lemma \ref{lemma1}, the viscous and resistive flux contribution in the domain become
\begin{equation}
-\int\limits_{\Omega}(\vec{\nabla}\statevec{w})^T\bigstatevec{f}^v\,\textrm{dV} = - \int\limits_{\Omega}(\vec{\nabla}\statevec{w})^T\twentysevenMatrix{K}\,\vec{\nabla}\statevec{w}\,\textrm{dV}\leq 0.
\end{equation}
Assuming the damping parameter $\alpha\geq 0$ and a positive temperature, i.e. $\beta>0$, the contribution of the damping term to the total entropy evolution is guaranteed negative $-2\alpha \beta \psi^2\leq 0$, which finalizes the proof of Theorem \ref{theorem:1}. 
\end{proof}

\begin{cor}
If we consider a closed system, e.g. periodic boundary conditions, Theorem \ref{theorem:1} shows that the total entropy is a decreasing function
\begin{equation}
\int\limits_{\Omega}S_t\,\textrm{dV}\leq 0.
\end{equation}
\end{cor}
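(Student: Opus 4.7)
The plan is straightforward: the corollary is a direct consequence of Theorem \ref{theorem:1}, and essentially requires only verifying that the two surface integrals in the integrated entropy inequality vanish for a closed system. All of the heavy analytical work (entropy conservation of the Euler flux, cancellation of the Powell and GLM non-conservative terms in entropy space, symmetry and positive semi-definiteness of $\twentysevenMatrix{K}$, and non-positivity of the damping source) has already been carried out in Lemma \ref{lemma1} and Theorem \ref{theorem:1}, so the proof reduces to disposing of the boundary contribution.

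First I would specialize to the canonical case of periodic boundary conditions. For such data, $\partial\Omega$ decomposes into pairs of opposite faces on which the traces of $\statevec{u}$, $\statevec{w}$ and $\spacevec{\nabla}\statevec{w}$ coincide, while the outward unit normals point in opposite directions. Consequently the integrands $\spacevec{f}^{\,\ent}(\statevec{u})\cdot\spacevec{n}$ and $\statevec{w}^T(\bigstatevec{f}^v(\statevec{u},\spacevec{\nabla}\statevec{u})\cdot\spacevec{n})$ on each identified pair of faces sum to zero pointwise, so the total boundary contribution collapses to
\begin{equation*}
\int\limits_{\partial \Omega}(\spacevec{f}^{\,\ent} \cdot \spacevec{n})-\statevec{w}^T(\bigstatevec{f}^v \cdot \spacevec{n})\dS = 0.
\end{equation*}
More generally, for any closed system the same conclusion follows directly from the definition: a closed system exchanges neither mass, momentum, energy, nor magnetic flux with its exterior, so every normal flux at the boundary is identically zero and both surface terms vanish term by term.

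Substituting this into the inequality of Theorem \ref{theorem:1} immediately produces $\int_{\Omega} S_t\,\textrm{dV}\leq 0$, which is the asserted non-increase of the total mathematical entropy. There is essentially no obstacle to the argument; the only points that deserve a sentence are, first, to recall that the paper adopts the negative convention $S = -\rho s/(\gamma-1)$, so that a non-increasing $\int_\Omega S\,\textrm{dV}$ corresponds to a non-decreasing thermodynamic entropy in line with the second law, and second, to verify that the traces of $\statevec{w}$ and $\spacevec{\nabla}\statevec{w}$ on $\partial\Omega$ inherit the periodicity from $\statevec{u}$, which is immediate from the smoothness of the mapping \eqref{entrvars} away from vacuum.
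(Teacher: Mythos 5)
Your argument is correct and is essentially the paper's own (implicit) reasoning: for periodic boundary conditions the paired-face traces and opposite normals make the surface integral in Theorem \ref{theorem:1} vanish, leaving $\int_\Omega S_t\,\mathrm{dV}\leq 0$ directly. The only loose point is the sweeping claim that for \emph{any} closed system every normal flux vanishes term by term (for wall-type boundaries the vanishing of $\statevec{w}^T(\bigstatevec{f}^v\cdot\spacevec{n})$ requires specific adiabatic/no-slip conditions), but this does not affect the periodic case the corollary invokes.
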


In summary, we have demonstrated that the resistive GLM-MHD equations satisfy an entropy inequality. To do so, we separated the advective contributions into Euler, ideal MHD and GLM pieces and considered the viscous contributions separately, which served to clarify how each term contributed to the entropy analysis. A major result is that it is possible to rewrite the resistive terms of the three-dimensional system in an entropy consistent way to demonstrate that those terms are entropy dissipative. We will use an identical splitting of the advective and diffusive terms in the discrete entropy stability proofs in Sec. \ref{Sec:ESDGres} to directly mimic the continuous analysis.

%%%%%%%%%%%%%%%%%%%%%%%%%%%%%%%%%%%%%%%%%%%%%%%%%%%%%%%%%%%%%%%%%%%%%%%%
\section{Curved split form discontinuous Galerkin approximation}\label{Sec:DG}
%%%%%%%%%%%%%%%%%%%%%%%%%%%%%%%%%%%%%%%%%%%%%%%%%%%%%%%%%%%%%%%%%%%%%%%%
%
In this section, we briefly introduce the building blocks of our entropy stable DGSEM with LGL nodes on three-dimensional curvilinear hexahedral meshes. 
%
%%%%%%%%%%%%%%%%%%%%%%%%%%%%%%%%%%%%%%%%%%%%%%%%%%%%%%%%%%%%%%%%%%%%%%%%
\subsection{Mapping the Equations}
%%%%%%%%%%%%%%%%%%%%%%%%%%%%%%%%%%%%%%%%%%%%%%%%%%%%%%%%%%%%%%%%%%%%%%%%
%
First, we subdivide the physical domain, $\Omega$, into $N_{\mathrm{el}}$ non-overlapping and conforming hexahedral elements, $E_{\el}$, $\el=1,2,\ldots,N_{\mathrm{el}}$, which can have curved faces if necessary to accurately approximate the geometry \cite{Hindenlang2015}. We create a transformation $\spacevec x = \spacevec X(\spacevec \xi)$ to map computational coordinates, $\spacevec\xi=(\xi,\eta,\zeta)^T$, in the reference element $E=[-1,1]^{3}$ to physical coordinates $\spacevec{x}=(x,y,z)^T$, for each element, e.g. \cite{Farrashkhalvat2003,Kopriva:2009nx} and use this mapping to transform the governing equations \eqref{resGLMMHD} into reference space. From the element mapping we define the three covariant basis vectors
\begin{equation}
\spacevec{a}_{i}=\pderivative{\spacevec{X}}{\xi^{i}}\,,\quad i=1,2,3,
\end{equation}
and (volume weighted) contra-variant vectors
\begin{equation}
J\spacevec{a}^{\,i}=\spacevec{a}_{j}\times\spacevec{a}_{k}\,, \quad (i,j,k)\;\text{cyclic}\,,
\end{equation}
where the Jacobian of the transformation is given by
\begin{equation}
J=\spacevec{a}_{i}\cdot\left(\spacevec{a}_{j}\times \spacevec{a}_{k}\right), \quad (i,j,k)\;\text{cyclic}.
\end{equation}
The basis vectors and Jacobian values will vary on curved elements, but still, the divergence of a constant flux vanishes in the reference cube, i.e., the contra-variant vectors satisfy the metric identities \cite{Kopriva:2006er}
\begin{equation}
\sum\limits_{i = 1}^3 {\frac{{\partial\!\left( J{a^i_n} \right)}}{{\partial {\xi ^i}}} = 0}\,,\quad n=1,2,3\,. 
\label{eq:MetricIdentities}
\end{equation}
We note, that our discrete entropy analysis reveals that the proper discretization of the metric identities is crucial. 

\change{
In order to express the transformation of the gradient and divergence operators, we define two different matrices dependent on the metric terms
\begin{equation}
\twentysevenMatrix{M}  = %\left[ {\begin{array}{*{20}{c}}
%  {{\nineMatrix M_{11}}}&{{\nineMatrix M_{12}}}&{{\nineMatrix M_{13}}} \\[0.075cm]
%  {{\nineMatrix M_{21}}}&{{\nineMatrix M_{22}}}&{{\nineMatrix M_{23}}} \\[0.075cm] 
%  {{\nineMatrix M_{31}}}&{{\nineMatrix M_{32}}}&{{\nineMatrix M_{33}}} \\[0.075cm]
%\end{array}} \right] = 
\left[ {\begin{array}{*{20}{c}}
  {Ja_1^1\,{\nineMatrix I_9}}&{Ja_1^2\,{\nineMatrix I_9}}&{Ja_1^3\,{\nineMatrix I_9}} \\ 
  {Ja_2^1\,{\nineMatrix I_9}}&{Ja_2^2\,{\nineMatrix I_9}}&{Ja_2^3\,{\nineMatrix I_9}} \\ 
  {Ja_3^1\,{\nineMatrix I_9}}&{Ja_3^2\,{\nineMatrix I_9}}&{Ja_3^3\,{\nineMatrix I_9}} 
\end{array}} \right],
\quad\quad
\threeMatrix{M} = 
\left[ {\begin{array}{*{20}{c}}
  {Ja_1^1\,}&{Ja_1^2\,}&{Ja_1^3\,} \\ 
  {Ja_2^1\,}&{Ja_2^2\,}&{Ja_2^3\,} \\ 
  {Ja_3^1\,}&{Ja_3^2\,}&{Ja_3^3\,} 
\end{array}} \right]
\end{equation}
with the $9\times9$ unit matrix ${\nineMatrix I_9}$, as $9$ is the size of the GLM-MHD system.
Applying these matrices, the transformation of the gradient of a state or a scalar is
\begin{equation}
\spacevec \nabla_{x}\statevec u=\left[ 
{\begin{array}{*{20}{c}}
  {{{\statevec u}_x}} \\ 
  {{{\statevec u}_y}} \\ 
  {{{\statevec u}_z}} 
\end{array}} \right] = \frac{1}{J}\twentysevenMatrix{M}\left[ {\begin{array}{*{20}{c}}
  {{{\statevec u}_\xi }} \\ 
  {{{\statevec u}_\eta }} \\ 
  {{{\statevec u}_\zeta }} 
\end{array}} \right] = \frac{1}{J} \twentysevenMatrix{M}{\spacevec\nabla _\xi }\statevec u\,, \quad 
\spacevec\nabla_{x} h =\frac{1}{J} \threeMatrix{M}{\spacevec\nabla _\xi }h\,,
\end{equation}
and the transformation of the divergence  is 
\begin{equation}\label{eq:biggDiv}
\spacevec\nabla_{x}  \cdot \bigstatevec g = \frac{1}{J}\spacevec\nabla_{\xi}\cdot\left({\twentysevenMatrix{M}^T}\bigstatevec g\right)\,,\quad 
\spacevec\nabla_{x}  \cdot \spacevec h = \frac{1}{J}\spacevec\nabla_{\xi}\cdot\left({\threeMatrix{M}^T}\spacevec h\right)\,.
\end{equation} 
It is common to define contravariant block vectors and contravariant spatial vectors, e.g. 
\begin{equation}\label{eq:contra9D}
\bigcontravec{\!g} = \twentysevenMatrix{M}^{T}\bigstatevec g=\bigstatevec g \,\threeMatrix{M}\,,\quad \contraspacevec{h} = \threeMatrix{M}^{T}\spacevec h\,.
\end{equation} }
For the discretization of the viscous and resistive terms we introduce the gradient of the entropy variables as an additional unknown  $\bigstatevec q$. Applying the transformations to the divergence and the gradient, we get the transformed resistive GLM-MHD equations 
\begin{equation}
\begin{split}
 J {{\statevec u}_t} + {\spacevec\nabla _\xi } \cdot {\,\bigcontravec{\!f}^a} + \change{\left(\spacevec{\nabla}_{\xi}\cdot \contraspacevec{B} \,\right)\JanC +  \GalCT\cdot\spacevec{\nabla}_{\xi}{\psi}} &= {\spacevec\nabla _\xi } \cdot {{\,\bigcontravec{\!f}^v}}\left( {\statevec u,{\bigstatevec q}} \right) + J \statevec{r} \\
 J \bigstatevec q &= \twentysevenMatrix{M}  {\spacevec\nabla _\xi }\statevec w.
\end{split} 
\label{eq:xFormedNSEquations}
\end{equation}

The next step is the weak formulation of the transformed equations \eqref{eq:xFormedNSEquations}, for which we multiply by test functions $\testfuncOne$ and $\testfuncTwo$. Next, we use integration by parts for the flux divergence as well as for the non-conservative term to arrive at
\begin{equation}\label{eq:DGWeakForm2}
 \begin{aligned}
  \iprod{J\statevec u_t,\testfuncOne} & + \isurfE {\testfuncOne^T\left\{ {\bigstatevec f^a- {{\bigstatevec f^v}}} \right\} \cdot \spacevec n\,\hat{s}\dS}  -  \iprod{\,\bigcontravec{\!f}^a,{\spacevec\nabla _\xi }\testfuncOne} + \change{\isurfE {\testfuncOne}^T\JanC(\spacevec{B}\cdot\spacevec{n})\hat{s}\dS - \iprod{\contraspacevec{B},\spacevec{\nabla}_{\xi}(\testfuncOne^T\JanC)}} \\
		& \change{+ \isurfE {\testfuncOne}^T(\GalC\cdot\spacevec{n})\psi\hat{s}\dS - \iprod{{\psi},\spacevec{\nabla}_{\xi}\cdot(\testfuncOne^T\GalCT)}} = -\iprod{{{\,\bigcontravec{\!f}^v}},\spacevec\nabla _\xi  \testfuncOne} + \iprod{J \statevec{r} , \testfuncOne} \hfill \\
  \iprod{J\bigstatevec q,\testfuncTwo} & = \isurfE { {\statevec w}^T \left(\testfuncTwo \cdot \spacevec{n}\right)\,\hat{s}\dS}  - \iprod{\statevec w,\spacevec\nabla _\xi   \cdot \left( {{\twentysevenMatrix M^T}\testfuncTwo } \right)}. \hfill \\ 
 \end{aligned} 
\end{equation}
Here, we introduced the inner product notation on the reference element for state and block vectors
\begin{equation}
{{{\iprod{\statevec u,\statevec v} }} = \int\limits_{E}{\statevec u^{T}\statevec v \, \mathrm{d}\xi \mathrm{d}\eta \mathrm{d}\zeta } }\quad\text{ and }\quad\iprod{\bigstatevec f,\bigstatevec g}   = \int\limits_{E} {\sum\limits_{i = 1}^3 {\statevec f_i^T{\statevec g_i}} \, \mathrm{d}\xi \mathrm{d}\eta \mathrm{d}\zeta }\,
\end{equation} 
as well as the surface element $\hat{s}$ and the outward pointing unit normal vector $\spacevec{n}$ in physical space. In particular, these are defined for all faces of the reference element as
\begin{equation}
\label{eq:surface_metrics}
\begin{aligned}
     \xi =\pm 1  \quad&:\quad            \hat{s}(\eta,\zeta) :=\left|J\spacevec{a}^{1}(\pm1, \eta,\zeta)\right|
                          \,,\quad  \spacevec{n}(\eta,\zeta) :=  \pm J\spacevec{a}^{1}(\pm1, \eta,\zeta)/\hat{s}(\eta,\zeta)
\\  \eta =\pm 1  \quad&:\quad            \hat{s}(\xi,\zeta)  :=\left|J\spacevec{a}^{2}( \xi, \pm1,\zeta)\right|
                          \,,\quad  \spacevec{n}(\xi,\zeta)  :=  \pm J\spacevec{a}^{2}( \xi, \pm1,\zeta)/\hat{s}(\xi,\zeta) 
\\ \zeta =\pm 1  \quad&:\quad            \hat{s}(\xi,\eta)   :=\left|J\spacevec{a}^{3}( \xi, \eta, \pm1)\right| 
                          \,,\quad  \spacevec{n}(\xi,\eta)   :=  \pm J\spacevec{a}^{3}( \xi, \eta, \pm1)/\hat{s}(\xi,\eta) \,.
\end{aligned}
\end{equation} 
It is important to note, that under the assumption of a conforming mesh, the surface element $\hat{s}$ is continuous across the element interface and the normal vector only changes sign. 

%%%%%%%%%%%%%%%%%%%%%%%%%%%%%%%%%%%%%%%%%%%%%%%%%%%%%%%%%%%%%%%%%%%%%%%%
\subsection{Spectral element approximation}
%%%%%%%%%%%%%%%%%%%%%%%%%%%%%%%%%%%%%%%%%%%%%%%%%%%%%%%%%%%%%%%%%%%%%%%%

Details on the general DGSEM algorithm are available in the literature, e.g. in the book by Kopriva \cite{Kopriva:2009nx} and in Hindenlang et al. \cite{Hindenlang201286}. 

For a local approximation with polynomial degree $N$, we define $N+1$ LGL nodes and weights, $\xi_i$ and $\omega_i$, $i=0,\ldots,N$ on the unit interval $[-1,1]$ and span the one-dimensional nodal Lagrange basis functions $\ell_i(\xi)$. The one-dimensional functions are extended to the three-dimensional reference element by a tensor product ansatz. We approximate the state vector, flux vectors, etc. with polynomial interpolation on the LGL nodes denoted by capital letters. Alternatively, we write the interpolation of a function $g$ through those nodes as $G=\interpolant{g}$. Moreover, these local polynomial approximations of degree $N$ are used to define the discrete derivative operator. In one spatial dimension, the derivative matrix reads as
\begin{equation}\label{eq:derMatrix}
\dmat_{ij}:=\frac{\partial\ell_j}{\partial\xi}\bigg|_{\xi=\xi_i},\qquad i,j=0,\ldots,N.
\end{equation}
Integrals in the variational form are approximated by the same LGL quadrature rule.  Due to the collocation of the LGL polynomial approximation ansatz and the quadrature \eqref{eq:quadAnsatz}, the mass matrix is discretely orthogonal. In one spatial dimension, the mass matrix is
\begin{equation}
\mmat = \text{diag}(\omega_0,\ldots,\omega_N).
\end{equation}
As mentioned above, this particular choice of the DG derivative operator yields the SBP property
\begin{equation}\label{eq:SBPProperty}
(\mmat\dmat) + (\mmat\dmat)^T = \qmat + \qmat^T = \bmat,
\end{equation}
where we introduce the notation of the SBP matrix, $\qmat$, as well as the boundary matrix
\begin{equation}\label{eq:boundaryMatrix}
\bmat = \text{diag}(-1,0,\ldots,0,1).
\end{equation}
We stress again, that the SBP property is crucial for the stability proofs presented in this work. Furthermore, we note, that it is also important for our proofs on curvilinear meshes, that the mass matrix, also known as the norm matrix in the language of SBP finite difference methods, is diagonal. Additionally, the proper computation of the metric terms is crucial to guarantee that the \textit{discrete metric identities} hold
\begin{equation}
\sum\limits_{i = 1}^3 {\frac{\partial \interpolant{Ja^i_n}}{\partial {\xi ^i}} = 0},\quad n=1,2,3.
\label{eq:DiscreteMetricIdentities}
\end{equation}
This is ensured if the metric terms are computed as curl, i.e.,
\begin{equation}
Ja_n^i =  - {{\hat x}_i} \cdot {\nabla _\xi } \times \left( {\interpolant{{{X_l}{\nabla _\xi }{X_m}} }} \right)\,,\quad i = 1,2,3,\;n = 1,2,3,\;(n,m,l)\;\text{cyclic}.
\end{equation}
This definition ensures free stream preservation discretely and has already been shown to be important for numerical stability, e.g. \cite{Kopriva2016274,wintermeyer2017,Gassner2017}. 

Tensor product extension is used to approximate integrals in multiple spatial dimensions. As such we express the discrete inner product between two functions  $f$ and $g$ in three space dimensions as 
\begin{equation}\label{eq:quadAnsatz}
\iprod{f,g}_N = \sum\limits_{n,m,l = 0}^N {{f_{nml}}{g_{nml}}{\omega_n}{\omega_m}{\omega_l}} \equiv \sum\limits_{n,m,l = 0}^N {{f_{nml}}{g_{nml}}{\omega_{nml}}}, 
\end{equation}
where  $f_{nml}=f\left(\xi_{n},\eta_{m},\zeta_{l}\right)$, etc.

A consequence of the SBP property is a discrete extended Gauss Law \cite{kopriva2017}. That is, for any $V=\interpolant{v}$
\begin{equation}
 \iprodN{\spacevec\nabla_\xi  \cdot \bigcontravec F,V} = \isurfEN {V^T\!\left(\bigcontravec F \cdot \hat n\right) \dS}  - \iprodN{\bigcontravec F,\spacevec\nabla_\xi V} = \isurfEN {V^T\!\left(\bigstatevec F \cdot \spacevec n\right)\hat{s}  \dS}  - \iprodN{\bigcontravec F,\spacevec\nabla_\xi V}\,,
 \label{eq:DiscreteGreens_DAK}
\end{equation}
which we can apply to mimic integration-by-parts in the continuous derivations. Here, the discrete surface integral is also defined via LGL quadrature, and the integrand is transformed with the collocated surface metrics from \eqref{eq:surface_metrics}, yielding
\begin{equation}
 \begin{split}
 \label{eq:discrete_surfint}
 \isurfEN {V^T\!\left(\bigcontravec F \cdot \hat n\right)\!\!\dS}  
 &=\isurfEN {V^T\left(\sum\limits_{i = 1}^3 \left(J\spacevec{a}^{\,i}\cdot \bigstatevec F\right)   \hat n_i\right)\dS}
 %  =\isurfEN {\left(\bigstatevec F \cdot \spacevec{n} \hat{s}\right)\,\dS}\\
% \\&=\isurfN {\Big [V^T \left(\bigstatevec F \cdot \spacevec{n}\right)\hat{s} \Big]_{\xi    =  \pm 1}  \,d\eta d\zeta }
%   + \isurfN {\Big [V^T \left(\bigstatevec F \cdot \spacevec{n}\right)\hat{s} \Big]_{\eta   =  \pm 1}  \,d\xi d\zeta  }
%   + \isurfN {\Big [V^T \left(\bigstatevec F \cdot \spacevec{n}\right)\hat{s} \Big]_{\zeta  =  \pm 1}  \,d\xi d\eta   }
%
\\&\equiv\sum\limits_{j,k = 0}^N\!\! {{\omega_{jk}}\,\Big[V^T\left(\bigstatevec F \cdot \spacevec{n}\right) \hat{s}\Big]_{\eta_j,\zeta_k}^{\xi=\pm 1}  } \!\!\!
       + \sum\limits_{i,k = 0}^N\!\! {{\omega_{ik}}\,\Big[V^T\left(\bigstatevec F \cdot \spacevec{n}\right) \hat{s}\Big]_{\xi_i,\zeta_k}^{\eta=\pm 1}  } \!\!\!
       + \sum\limits_{i,j = 0}^N\!\! {{\omega_{ij}}\,\Big[V^T\left(\bigstatevec F \cdot \spacevec{n}\right) \hat{s}\Big]_{\xi_i,\eta_j }^{\zeta=\pm 1}  }
\\& = \isurfEN {V^T\left(\bigstatevec F \cdot \spacevec{n}\right) \,\hat{s} \dS}\,.  
% 
%\\&= \sum\limits_{j,k = 0}^N {\left. {{\omega_{jk}}{\contrastatevec F_{1}}\left( {\xi    ,{\eta_j},{\zeta_k}} \right)} \right|_{\xi   =  - 1}^1}  
%  + \sum\limits_{i,k = 0}^N {\left. {{\omega_{ik}}{\contrastatevec F_{2}}\left( {{\xi_i},\eta    ,{\zeta_k}} \right)} \right|_{\eta  =  - 1}^1}  
%  + \sum\limits_{i,j = 0}^N {\left. {{\omega_{ij}}{\contrastatevec F_{3}}\left( {{\xi_i},{\eta_j},\zeta    } \right)} \right|_{\zeta =  - 1}^1} 
%\\&\equiv\isurfN {\left. {{\contrastatevec F_{1}}d\eta d\zeta } \right|} _{\xi    =  - 1}^1 
%       + \isurfN {\left. {{\contrastatevec F_{2}}d\xi d\zeta  } \right|} _{\eta   =  - 1}^1 
%       + \isurfN {\left. {{\contrastatevec F_{3}}d\xi d\eta   } \right|} _{\zeta  =  - 1}^1.
 \end{split}
\end{equation}

Next, we replace the Jacobian, the metric terms as well as the solution vector, its time derivative, the fluxes, the test functions and source term by polynomial interpolations in \eqref{eq:DGWeakForm2}, to obtain the \textit{discrete weak form} of the DGSEM
\begin{equation}
 \begin{aligned}
  \iprodN{\interpolant{J}\statevec U_t,\testfuncOne}
  &+ \isurfEN {\testfuncOne^{T}\left\{ {\statevec F_n^a - \statevec F_n^v} \right\} \hat s\dS}  
  - \iprodN{\bigcontravec F^a,{\spacevec\nabla _\xi }\testfuncOne}+\change{\isurfEN \testfuncOne^T\Jan B_n \hat{s} \dS - \iprodN{\contraspacevec{B},\spacevec{\nabla}_{\xi}(\testfuncOne^T\Jan)}}
	  \\&\change{+\isurfEN \testfuncOne^T\GalDs_n\psi\hat{s} \dS - \iprodN{{\psi},\spacevec{\nabla}_{\xi}\cdot\interpolant{\testfuncOne^T\GalDT}}} = -\iprodN{{\bigcontravec F^v},\spacevec\nabla_\xi \testfuncOne}\hfill + \iprodN{\interpolant{J}\statevec R,\testfuncOne}\\
  \iprodN{\interpolant{J}\bigstatevec Q, \testfuncTwo } &= 
  \isurfEN {{{\statevec W}^T}\left({\testfuncTwo } \cdot \spacevec{n}\right)\hat{s} \dS}  
  - \iprodN{\statevec W,\spacevec\nabla_\xi  \cdot \interpolant{{\twentysevenMatrix M^T}\testfuncTwo } }, 
 \end{aligned}
\end{equation}
where we introduced compact notation for normal quantities, e.g., the normal flux in physical space $\statevec F_n=\left(\bigstatevec F \cdot \spacevec{n}\right)$.

Due to the discontinuous nature of our local polynomial ansatz, values at the element interfaces are not uniquely defined. The elements are coupled through the boundary terms by way of \emph{numerical fluxes}, which we denote as $\statevec F_n^{a,*}$, $\statevec F_n^{v,*}$ and $\statevec W^{*}$. For now, we postpone the selection of these numerical flux functions to the next sections. 
\change{The non-conservative terms also couple elements through the boundary. However, for now, as the definition of the non-conservative terms at the boundary is not unique, we denote these unknowns at the element interface by $\Bstar$ and $\Psistar$ to obtain}
\begin{equation}\label{eq:weakFormDGSEM}
 \begin{aligned}
\iprodN{\interpolant{J}\statevec U_t,\testfuncOne}
&+ \isurfEN {\testfuncOne^{T}\left\{ {\statevec F_n^{a,*} - \statevec F_n^{v,*}} \right\} \hat{s} \dS}  
- \iprodN{\bigcontravec F^a,{\spacevec\nabla _\xi }\testfuncOne } + \change{\isurfEN \testfuncOne^T \Bstar \hat{s}\dS - \iprodN{\contraspacevec{B},\spacevec{\nabla}_{\xi}\left(\testfuncOne^T\Jan\right)}}\\
&\change{+ \isurfEN \testfuncOne^T \Psistar \hat{s}\dS - \iprodN{{\psi},\spacevec{\nabla}_{\xi}\cdot\interpolant{\testfuncOne^T\GalDT}}} = -\iprodN{\bigcontravec F^v,\spacevec\nabla_\xi \testfuncOne} + \iprodN{\interpolant{J}\statevec R,\testfuncOne} \hfill \\
\iprodN{\interpolant{J}\bigstatevec Q,\testfuncTwo} 
&= \isurfEN {{{\statevec W}^{*,T}}\left(\testfuncTwo  \cdot \spacevec n \right)\hat{s}\dS}  
- \iprodN{\statevec W,\spacevec\nabla_\xi  \cdot \interpolant{{\twentysevenMatrix M^T}\testfuncTwo}}. \hfill \\ 
 \end{aligned}
\end{equation}
Applying the discrete extended Gauss law \eqref{eq:DiscreteGreens_DAK} to the flux and non-conservative terms of the first equation in \eqref{eq:weakFormDGSEM} gives the \textit{strong form} of the DGSEM, which we use in this work to approximate solutions of the resistive GLM-MHD equations
\begin{equation}
 \begin{aligned}
\iprodN{\interpolant{J}\statevec U_t,\testfuncOne} 
&+\iprodN{\spacevec\nabla _\xi \cdot \interpolant{\bigcontravec F^a},\testfuncOne }
+ \isurfEN {\testfuncOne^{T}\left\{ \left(\statevec F_n^{a,*} - \statevec{F}_n^a\right) \right\} \hat s\dS} \\
&+\change{\iprodN{\Jan\spacevec\nabla _\xi\cdot\interpolant{\contraspacevec{B}},\testfuncOne}
+\isurfEN\testfuncOne^T \left\{\Bstar-\Jan \Bn \right\}\hat{s}\dS}\\
&\change{+\iprodN{\GalDT\cdot\spacevec\nabla_\xi \interpolant{{\psi}},\testfuncOne}
+\isurfEN\testfuncOne^T \left\{\Psistar-\GalDs_n \psi \right\}\hat{s}\dS}\\
&= \iprodN{\spacevec\nabla _\xi\cdot\interpolant{\bigcontravec F^v},\testfuncOne }
+\isurfEN\testfuncOne^T\left\{\statevec F_n^{v,*} - \statevec{F}_n^v \right\}\hat{s}\dS 
+ \iprodN{\interpolant{J}\statevec R,\testfuncOne} \hfill \\
\iprodN{\interpolant{J}\bigstatevec Q,\testfuncTwo } 
&= \isurfEN {{{\statevec W}^{*,T}}\left( \testfuncTwo  \cdot \spacevec{n}\right)\hat s\dS}  
- \iprodN{\statevec W,\spacevec\nabla_\xi  \cdot \interpolant{{\twentysevenMatrix M^T}\testfuncTwo } }. \hfill \\ 
 \end{aligned}
 \label{eq:StrongDGSEM}
\end{equation}

%%%%%%%%%%%%%%%%%%%%%%%%%%%%%%%%%%%%%%%%%%%%%%%%%%%%%%%%%%%%%%%%%%%%%%%%
\subsection{Split form approximation}
%%%%%%%%%%%%%%%%%%%%%%%%%%%%%%%%%%%%%%%%%%%%%%%%%%%%%%%%%%%%%%%%%%%%%%%%

The entropy stable discretization of the flux divergence term is based on the works of Fisher et al. and Carpenter et al. \cite{fisher2013_2,gassner_skew_burgers,carpenter_esdg}. We follow the notation introduced in \cite{Gassner:2016ye,Gassner2017} and present a split form DG approximation, where we have two numerical fluxes, one at the surface and one inside the volume. \change{The special split form volume integral reads
\begin{equation}
\label{eq:entropy-cons_volint}
\begin{split}
\spacevec\nabla _\xi \cdot \interpolant{\bigcontravec F^a}\approx \DD\bigcontravec F^{a,\#} = 
&\quad 2\sum_{m=0}^N \dmat_{im}\,\left(\bigstatevec{F}^{a,\#}(\statevec{U}_{ijk}, \statevec{U}_{mjk})\cdot\avg{J\spacevec{a}^{\,1}}_{(i,m)jk}\right)\\ 
&+     2\sum_{m=0}^N \dmat_{jm}\,\left(\bigstatevec{F}^{a,\#}(\statevec{U}_{ijk}, \statevec{U}_{imk})\cdot\avg{J\spacevec{a}^{\,2}}_{i(j,m)k}\right)\\ 
&+     2\sum_{m=0}^N \dmat_{km}\,\left(\bigstatevec{F}^{a,\#}(\statevec{U}_{ijk}, \statevec{U}_{ijm})\cdot\avg{J\spacevec{a}^{\,3}}_{ij(k,m)}\right)\,
\end{split}
\end{equation}
for each point $i,j,k$ of an element}. Here we introduce the two-point, symmetric volume flux $\bigstatevec{F}^{a,\#}$ and the arithmetic mean of the metric terms. The arithmetic mean in each spatial direction is written compactly, e.g., using the notation in the $\xi-$direction gives
\begin{equation}
\avg{\cdot}_{(i,m)jk} = \frac{1}{2}\left(\left(\cdot\right)_{ijk}+\left(\cdot\right)_{mjk}\right).
\end{equation}
\change{In a similar fashion, the volume contributions of the non-conservative terms are approximated by
\begin{equation}\label{eq:nonConsDerivative}
\begin{split}
\Jan\spacevec\nabla _\xi\cdot\interpolant{\contraspacevec{B}} \approx \Jan\DDncdiv\contraspacevec{B}=
&\quad \sum_{m=0}^N \dmat_{im}\,\left(\Jan_{ijk}\left(\spacevec{B}_{mjk}\cdot\avg{J\spacevec{a}^{\,1}}_{(i,m)jk}\right)\right)\\ 
&+     \sum_{m=0}^N \dmat_{jm}\,\left(\Jan_{ijk}\left(\spacevec{B}_{imk}\cdot\avg{J\spacevec{a}^{\,2}}_{i(j,m)k}\right)\right)\\ 
&+     \sum_{m=0}^N \dmat_{km}\,\left(\Jan_{ijk}\left(\spacevec{B}_{ijm}\cdot\avg{J\spacevec{a}^{\,3}}_{ij(k,m)}\right)\right),
\end{split}
\end{equation}
and 
%\begin{equation}\label{eq:nonConsGradient}
%\begin{split}
%\GalDT\cdot\spacevec\nabla_\xi \interpolant{{\psi}} \approx \GalDT\cdot\DDnvgrad{\psi} =
%& \sum_{m=0}^N \dmat_{im}\,\psi_{mjk}\left(\GalD_{ijk}\avg{J\spacevec{a}^{\,1}}_{(i,m)jk}\right)\\[0.1cm]
%&+     \sum_{m=0}^N \dmat_{jm}\,\psi_{imk}\left(\GalD_{ijk}\avg{J\spacevec{a}^{\,2}}_{i(j,m)k}\right)\\[0.1cm]
%&+     \sum_{m=0}^N \dmat_{km}\,\psi_{ijm}\left(\GalD_{ijk}\avg{J\spacevec{a}^{\,3}}_{ij(k,m)}\right),
%\end{split}
%\end{equation}
\begin{equation}\label{eq:nonConsGradient}
\begin{split}
\GalDT\cdot\spacevec\nabla_\xi \interpolant{{\psi}} \approx \GalDT\cdot\DDncgrad{\psi} =
& \quad\sum_{m=0}^N \dmat_{im}\,\left(\left(J\spacevec{a}^{\,1}_{ijk}\cdot\GalD_{ijk} \right)\psi_{mjk}\right)\\ 
&+      \sum_{m=0}^N \dmat_{jm}\,\left(\left( J\spacevec{a}^{\,2}_{ijk}\cdot\GalD_{ijk}\right)\psi_{imk}\right)\\ 
&+      \sum_{m=0}^N \dmat_{km}\,\left(\left( J\spacevec{a}^{\,3}_{ijk}\cdot\GalD_{ijk}\right)\psi_{ijm}\right),
\end{split}
\end{equation}
which introduces compact notation for the discrete divergence and gradient on the non-conservative terms.} We will show in the discrete entropy proofs, that it is important to separate the derivative on the magnetic field components and the metric terms.
Next, we address the discretization of the viscous terms in the resistive MHD equations. The volume contributions are computed in a standard DGSEM way, e.g. \cite{Hindenlang201286}, as
\begin{equation}\label{eq:viscVolume}
\begin{split}
\spacevec\nabla _\xi \cdot \interpolant{\bigcontravec F^v}\approx \DDs\bigcontravec{F}^{v} = \sum_{m=0}^N \dmat_{im}\,\left(\contrastatevec{F}^{v}_1\right)_{mjk} + \sum_{m=0}^N \dmat_{jm}\,\left(\contrastatevec{F}^{v}_2\right)_{imk} + \sum_{m=0}^N \dmat_{km}\,\left(\contrastatevec{F}^{v}_3\right)_{ijm},
\end{split}
\end{equation}
where the metric terms are included in the transformed viscous fluxes $\contrastatevec{F}^v_{l}$, $l=1,2,3$. Inserting the volume discretizations \eqref{eq:entropy-cons_volint}, \eqref{eq:nonConsDerivative}, \eqref{eq:nonConsGradient} and \eqref{eq:viscVolume} into \eqref{eq:StrongDGSEM} we obtain the split form DGSEM:
\begin{equation}
 \begin{aligned}
\iprodN{\interpolant{J}\statevec U_t,\testfuncOne} 
&+\iprodN{\DD\bigcontravec F^{a,\#},\testfuncOne }
+ \isurfEN {\testfuncOne^{T}\left\{ \statevec F_n^{a,*} - \statevec{F}_n^a \right\} \hat{s}\dS} \\
&\change{+\iprodN{\Jan\DDncdiv\contraspacevec{B},\testfuncOne}
+\isurfEN\testfuncOne^T \left\{\Bstar-\Jan \Bn \right\}\hat{s}\dS}\\
&\change{+\iprodN{\GalDT\cdot\DDncgrad\psi,\testfuncOne}
+\isurfEN\testfuncOne^T \left\{\Psistar-\GalDs_n \psi \right\}\hat{s}\dS}\\
&= \iprodN{\DDs\bigcontravec{F}^{v},\testfuncOne }
+\isurfEN\testfuncOne^T\left\{\statevec F_n^{v,*} - \statevec{F}_n^v \right\}\hat{s}\dS   
+ \iprodN{\interpolant{J}\statevec R,\testfuncOne} \hfill \\
\iprodN{\interpolant{J}\bigstatevec Q,\testfuncTwo } 
&= \isurfEN {{{\statevec W}^{*,T}}\left(\testfuncTwo \cdot \spacevec n\right)\hat{s}\dS}  
- \iprodN{\statevec W,\spacevec\nabla_\xi  \cdot \interpolant{{\twentysevenMatrix M^T}\testfuncTwo } }. \hfill \\ 
 \end{aligned}
 \label{eq:schemeFinal}
\end{equation}

%%%%%%%%%%%%%%%%%%%%%%%%%%%%%%%%%%%%%%%%%%%%%%%%%%%%%%%%%%%%%%%%%%%%%%%%
\section{Entropy stable DG scheme for resistive GLM-MHD} \label{Sec:ESDGres}
%%%%%%%%%%%%%%%%%%%%%%%%%%%%%%%%%%%%%%%%%%%%%%%%%%%%%%%%%%%%%%%%%%%%%%%%

Much work in the numerics community has been invested over the years to develop approximations of non-linear hyperbolic PDE systems that remain thermodynamically consistent, e.g. \cite{Chandrashekar2015,Fjordholm2011,gassner_skew_burgers,tadmor2016,wintermeyer2017,Winters2016}. This began with the pioneering work of Tadmor \cite{tadmor1984,Tadmor1987_2} to develop low-order finite volume approximations. Extension to higher spatial order was recently achieved in the context of DG methods for the compressible Navier-Stokes equations \cite{carpenter_esdg,Gassner:2016ye} as well as the ideal MHD equations \cite{Liu2017,Valencia2017}. Remarkably, Carpenter et al. showed that the conditions to develop entropy stable approximations at low-order immediately apply to high-order methods provided the derivative approximation satisfies the SBP property \cite{carpenter_esdg,fisher2013,fisher2013_2}. 

In order to retain entropy stability, we start with the derived split form DG approximation \eqref{eq:schemeFinal} and contract into entropy space by replacing the first test function with the interpolant of the entropy variables and the second one with the interpolant of the viscous fluxes to obtain:
\begin{equation}\label{eq:schemeFinalcontr}
%\resizebox{0.95\textwidth}{!}{$
\begin{aligned}
&\iprodN{\interpolant{J}\statevec{U}_t, \statevec{W}}  = 
- \iprodN{\DD \bigcontravec{F}^{a,\#}, \statevec{W}} 
\change{- \iprodN{\JanD \DDncdiv \contraspacevec{B}, \statevec{W}} 
-\iprodN{\GalDT\cdot\DDncgrad\psi,\statevec{W}}}
+ \iprodN{\interpolant{J} \statevec{R}, \statevec{W}} \\[0.125cm]
&  -  \isurfEN \statevec{W}^T \left[\statevec{F}_n^{a,\ast}-\statevec{F}_n^a \right]\hat{s} \dS 
\change{- \isurfEN  \statevec{W}^T \left[\Bstar - \JanD \Bn \right]\hat{s} \dS  -\isurfEN\statevec{W}^T \left\{\Psistar-\GalDs_n \psi \right\}\hat{s}\dS}\\[0.125cm]
&+ \iprodN{\DDs \bigcontravec{F}^v, \statevec{W}} 
       + \isurfEN \statevec{W}^T \left[\statevec{F}_n^{v,\ast}-\statevec{F}_n^v \right] \hat{s} \dS \\[0.125cm]
&\iprodN{\interpolant{J} \bigstatevec{Q},\bigcontravec{F}^v} 
 = \isurfEN \statevec{W}^{\ast,T}\left(\bigstatevec{F}^v\cdot\spacevec{n}\right)\hat{s} \dS - \iprodN{\statevec{W},\DDs \bigcontravec{F}^v} 
\end{aligned}%$}
\end{equation}
Here, we have intentionally arranged the advective plus non-conservative volume parts, the advective plus non-conservative surface parts and the viscous parts of the first equation into separate rows.

The time derivative term in \eqref{eq:schemeFinalcontr} is the rate of change of the entropy in the element. Assuming that the chain rule with respect to differentiation in time holds (time continuity), we use the contraction property of the entropy variable \eqref{eq:wsContraction} at each LGL node within the element to see that on each element $\el=1,\ldots,N_\mathrm{el}$ we have
\begin{equation}\label{totalEntr}
\iprodN{\interpolant{J}\statevec{U}_t,\statevec{W}} = \sum\limits_{i,j,k=0}^N J_{ijk}\omega_i \omega_j\omega_k \statevec{W}^T_{ijk}\frac{d \statevec{U}_{ijk}}{d t} = \sum\limits_{i,j,k=0}^N J_{ijk}\omega_{ijk} \frac{d S_{ijk}}{d t} = \iprodN{\interpolant{J}S_t,1}.
\end{equation}
To obtain the time derivative of the total discrete entropy we sum over all elements
\begin{equation}
\frac{d \overline{S}}{d t}\equiv\sum\limits_{\el=1}^{N_{\mathrm{el}}}\iprodN{J^\el S^\el_t,1}.
\end{equation}
The final goal of this section is to demonstrate the entropy stability of the contracted DG approximation \eqref{eq:schemeFinalcontr} for the resistive GLM-MHD system.  That is, we want the discrete total entropy in a closed system (periodic boundary conditions) to be a decreasing function
\begin{equation}\label{eq:whatWeWant}
\frac{d \overline{S}}{d t} \leq 0.
\end{equation}

To get the result \eqref{eq:whatWeWant} we examine each row in the first equation of \eqref{eq:schemeFinalcontr} incrementally. In Sec.~\ref{Sec:AdvParts}, we demonstrate the behavior of the advective and non-conservative volume as well as interface contributions. Throughout this section, we highlight how the metric terms and the GLM divergence cleaning parts affect the approximation. Then, in Sec.~\ref{Sec:ResParts}, we assess the contribution of the viscous and resistive terms by using the results of Lemma~\ref{lemma1} and a proof of entropy stability for the BR1 scheme presented in Gassner et al. \cite{Gassner2017}. 

%%%%%%%%%%%%%%%%%%%%%%%%%%%%%%%%%%%%%%%%%%%%%%%%%%%%%%%%%%%%%%%%%%%%%%%%
\subsection{Analysis of the advective parts}\label{Sec:AdvParts}
%%%%%%%%%%%%%%%%%%%%%%%%%%%%%%%%%%%%%%%%%%%%%%%%%%%%%%%%%%%%%%%%%%%%%%%%

This section focuses on the advective parts in the contracted DG approximation \eqref{eq:schemeFinalcontr}. First, we select the specific form of the advective interface and volume numerical fluxes in Sec.~\ref{Sec:Fluxes}. In the next section, \ref{Sec:Vol}, we show that the volume contributions of the entropy conservative flux of the Euler terms become the entropy flux at the surfaces, the ideal MHD terms cancel and the GLM terms vanish. By splitting the entropy conservative flux into three terms we explicitly see how the discrete contraction into entropy space mimics the results of the continuous analysis, i.e., \eqref{eq:fluxContractionEuler}, \eqref{eq:fluxContractionMHD} and \eqref{eq:fluxContractionGLM}. Next, with the knowledge that the volume contributions move to the interfaces, Sec.~\ref{Sec:Surf} addresses all the surface contributions and we select the form of the coupling for the non-conservative terms. By summing over all the elements and applying the definition of the entropy conservative fluxes we cancel all the remaining advective and non-conservative terms for a closed system (periodic boundary conditions). 

%%%%%%%%%%%%%%%%%%%%%%%%%%%%%%%%%%%%%%%%%%%%%%%%%%%%%%%%%%%%%%%%%%%%%%%%
\subsubsection{Numerical advective fluxes}\label{Sec:Fluxes}
%%%%%%%%%%%%%%%%%%%%%%%%%%%%%%%%%%%%%%%%%%%%%%%%%%%%%%%%%%%%%%%%%%%%%%%%

A consistent, symmetric numerical flux function, which is entropy conservative for the ideal GLM-MHD equations, is derived in the finite volume context \cite{Derigs2017} and serves as the backbone for the high-order entropy stable DGSEM considered in this work. First, we define the notation for the jump operator, arithmetic and logarithmic means between a left and right state, $a_L$ and $a_R$, respectively
\begin{equation}
\jump{a} := a_R-a_L, ~~~~~~~~~ \avg{a} := \frac{1}{2}(a_L+a_R), ~~~~~~~~ a^{\ln} := \jump{a}/\jump{\ln(a)},
\label{means}
\end{equation}
where a numerically stable procedure to evaluate the logarithmic mean is given in \cite{IsmailRoe2009}. We present the entropy conserving (EC) numerical flux in the first spatial direction to be
\begin{equation}\label{ECFlux}
\statevec{f}_1^{\ec}(\statevec u_L,\statevec u_R) =
\begin{pmatrix} 
\rho^{\ln} \avg{v_1} \\
\rho^{\ln} \avg{v_1}^2 - \avg{B_1}^2 + \overline{p} + \frac{1}{2} \Big(\avg{B_1 B_1} + \avg{B_2 B_2} + \avg{B_3 B_3}\Big)\\ 
\rho^{\ln} \avg{v_1} \avg{v_2} - \avg{B_1} \avg{B_2} \\
\rho^{\ln} \avg{v_1} \avg{v_3} - \avg{B_1} \avg{B_3} \\
f_{1,5}^{\ec} \\
c_h \avg{\psi} \\
\avg{v_1}\avg{B_2} - \avg{v_2}\avg{B_1}\\
\avg{v_1}\avg{B_3} - \avg{v_3}\avg{B_1}\\
c_h \avg{B_1} 
\end{pmatrix}
\end{equation}
with 
\begin{equation}
\begin{split}
f_{1,5}^{\ec} = & f_{1,1}^{\ec}\bigg[\frac{1}{2 (\gamma-1) \beta^{\ln}} - \frac{1}{2} \left(\avg{v_1^2} + \avg{v_2^2} + \avg{v_3^2}\right) \bigg] + f_{1,2}^{\ec} \avg{v_1} + f_{1,3}^{\ec} \avg{v_2} + f_{1,4}^{\ec} \avg{v_3} \\
 & + f_{1,6}^{\ec} \avg{B_1} + f_{1,7}^{\ec} \avg{B_2} + f_{1,8}^{\ec} \avg{B_3} + f_{1,9}^{\ec} \avg{\psi} - \frac{1}{2} \big(\avg{v_1 B_1^2}+\avg{v_1 B_2^2}+\avg{v_1 B_3^2}\big)\\
 & + \avg{v_1 B_1} \avg{B_1}+\avg{v_2 B_2} \avg{B_1}+\avg{v_3 B_3} \avg{B_1} - c_h \avg{B_1 \psi}
\end{split}
\end{equation}
and
\begin{equation*}
\overline{p} = \frac{\avg{\rho}}{2\avg{\beta}}.
\end{equation*}
This particular choice of flux satisfies the discrete entropy conservation condition \cite{Chandrashekar2015,Derigs2017,Liu2017,Valencia2017}
\begin{equation}\label{discECcond}
\jump{\statevec{w}}^T\statevec{f}_\ell^\ec(\statevec u_L,\statevec u_R) = \jump{\Psi_\ell} - \avg{B_\ell}\jump{\wDotJan} \,,\quad \ell=1,2,3
\end{equation}
with the entropy flux potential $\spacevec{\Psi}$ \eqref{eq:entPotential} and the contracted non-conservative state vector $\wDotJan$ \eqref{eq:contractSource}.
 
Moreover, in the presence of shocks or discontinuities, we must add dissipation to the interface fluxes in terms of the entropy variables  to ensure we do not violate the entropy inequality \eqref{entrineq}. In order to create such an entropy stable scheme, we use the EC flux in \eqref{ECFlux} as a baseline flux and add a general form of numerical dissipation at the interfaces to get an entropy stable (ES) numerical flux that is applicable to arbitrary flows
\begin{equation}\label{ES_flux}
	\bigstatevec{f}^{\,\es}\cdot\spacevec{n} =  \bigstatevec{f}^{\,\ec}\cdot \spacevec{n} - \frac{1}{2} \nineMatrix{\Lambda} \nineMatrix{H} \jump{\statevec{w}},
\end{equation}
where $\nineMatrix{H}$ is the entropy Jacobian and $\nineMatrix{\Lambda}$ the dissipation matrix. Note that for strong shocks or high Mach number flows a careful evaluation of the dissipation matrix at the interface is needed, e.g. \cite{Derigs2016_2,Winters2017}. It is straightforward to derive similar EC and ES fluxes for the $y-$ and $z-$directions. Full details can be found in \cite{Derigs2017}.

In the contracted DG approximation \eqref{eq:schemeFinalcontr} we select both the two point volume fluxes $\bigstatevec{F}^{a,\#}$ and the advective surface fluxes $\statevec{F}_n^{a,\ast}$ to be the EC fluxes
\begin{equation}\label{ECVolSurfluxes}
\bigstatevec{F}^{a,\#}(\statevec{U}_{ijk},\statevec{U}_{mjk}) = \bigstatevec{f}^{\,\ec}(\statevec{U}_{ijk},\statevec{U}_{mjk})\,,\quad\statevec{F}_n^{a,\ast} = \bigstatevec{F}^{\ec}\cdot\spacevec{n}=\bigstatevec{f}^{\,\ec}(\statevec U_L,\statevec U_R) \cdot\spacevec{n}\,,
\end{equation}
whereas the latter can also include stabilization terms as in \eqref{ES_flux}.

Again, as in the continuous analysis, we split the advective EC numerical flux function into three components
\begin{equation}\label{ECfluxsplit}
\bigstatevec{F}^\ec = \bigstatevec{F}^{\ec,\supEuler} + \bigstatevec{F}^{\ec,\supMHD} + \bigstatevec{F}^{\ec,\supGLM},
\end{equation}
as well as the appropriate entropy conservation conditions for the numerical flux functions at each surface LGL node,
\begin{align}
\jump{\statevec{W}}^T\bigstatevec{F}^{\ec,\supEuler} &= \jump{\spacevec{\Psi}^{\supEuler}}\label{eq:EulerPartDiscrete},\\[0.1cm]
\jump{\statevec{W}}^T\bigstatevec{F}^{\ec,\supMHD} &= \jump{\spacevec{\Psi}^{\supMHD}} - \avg{\spacevec{B}}\jump{\wDotJan}\label{eq:MHDPartDiscrete},\\[0.1cm]
\jump{\statevec{W}}^T\bigstatevec{F}^{\ec,\supGLM} &= \jump{\spacevec{\Psi}^{\supGLM}}\label{eq:GLMPartDiscrete},
\end{align}
where we use the previously defined split entropy flux potential \eqref{eq:entPotential2}. Respectively, \eqref{eq:EulerPartDiscrete} contains the hydrodynamic contributions, \eqref{eq:MHDPartDiscrete} contains the magnetic field parts and \eqref{eq:GLMPartDiscrete} contains the GLM components. 

%%%%%%%%%%%%%%%%%%%%%%%%%%%%%%%%%%%%%%%%%%%%%%%%%%%%%%%%%%%%%%%%%%%%%%%%
\subsubsection{Volume contributions}\label{Sec:Vol}
%%%%%%%%%%%%%%%%%%%%%%%%%%%%%%%%%%%%%%%%%%%%%%%%%%%%%%%%%%%%%%%%%%%%%%%%

Next, we focus on the advective volume discretizations as well as on the non-conservative volume terms in the first row of \eqref{eq:schemeFinalcontr}. Using \eqref{ECfluxsplit} we split the advective fluxes to determine the contributions from the Euler, MHD and GLM parts, separately. Since the contribution of the curvilinear Euler components has been investigated in the DG context, see e.g. \cite{carpenter_esdg,Gassner2017}, we address the curvilinear ideal MHD and GLM parts first. Due to the inclusion of the complete three-dimensional curved framework, both proofs are quite lengthy and, thus, can be found in the appendices.  
\change{
\begin{lem}[Entropy contribution of the curvilinear ideal MHD volume terms]\label{lem:MHD_EC_vol}~\newline
The curvilinear volume contributions of the ideal MHD equations in \eqref{eq:schemeFinalcontr} cancel in entropy space. That is,
\begin{equation}\label{eq:MHDVolTerms}
\iprodN{\DD\bigcontravec F^{\ec,\supMHD},\statevec W} + \iprodN{\Jan\DDncdiv\contraspacevec{B},\statevec{W}} = 0.
\end{equation}
\end{lem}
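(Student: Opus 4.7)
The plan is to mimic the pointwise continuous identity \eqref{eq:fluxContractionMHD} --- $\statevec{w}^T(\spacevec\nabla\cdot\bigstatevec{f}^{a,\supMHD} + \noncon^\supMHD) = 0$ --- at the discrete level. Discretely, this says that the combined contribution of the split-form EC volume integral and the direct discretization of the Powell term should vanish identically when contracted against the interpolated entropy variables, and the proof will be a discrete analog that manufactures this cancellation through SBP manipulations.

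First, I would expand $\iprodN{\DD \bigcontravec F^{\ec,\supMHD}, \statevec W}$ using the split-form definition \eqref{eq:entropy-cons_volint} restricted to the MHD component of the EC flux. After absorbing the quadrature weights $\omega_i$ into the SBP matrix via $\qmat = \mmat\dmat$, and exploiting the symmetry $\bigstatevec{F}^{\ec,\supMHD}(\statevec U_a,\statevec U_b) = \bigstatevec{F}^{\ec,\supMHD}(\statevec U_b,\statevec U_a)$ together with the symmetry of the metric averages $\avg{J\spacevec{a}^{\,\ell}}_{(i,m)}$, I would apply the standard Fisher--Carpenter antisymmetrization trick in each of the three reference directions. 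This rewrites the contracted volume integral as a double sum of $\qmat$-weighted jumps of $\statevec W$ paired with the two-point MHD EC flux.

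Next I would invoke the discrete EC condition for the MHD flux \eqref{eq:MHDPartDiscrete}, $\jump{\statevec W}^T \bigstatevec{F}^{\ec,\supMHD} = \jump{\spacevec\Psi^{\supMHD}} - \avg{\spacevec B}\jump{\wDotJan}$, which produces two classes of terms per reference direction: entropy-flux-potential jumps $\jump{\spacevec\Psi^{\supMHD}}$, which telescope via the SBP identity $\qmat + \qmat^T = \bmat$ to boundary-node contributions, and residual terms proportional to $\avg{\spacevec B}\jump{\wDotJan}$ that arise purely from the non-conservative character of ideal MHD. For the Powell contribution $\iprodN{\JanD \DDncdiv \contraspacevec{B}, \statevec W}$, the key observation is the pointwise contraction identity $\statevec W^T \JanC = \wDotJan$ from \eqref{eq:contractSource}, which reduces the inner product to $\iprodN{\wDotJan, \DDncdiv \contraspacevec{B}}$. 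I would then expand $\DDncdiv \contraspacevec{B}$ via \eqref{eq:nonConsDerivative}, convert its derivative matrices to $\qmat$, and apply the same antisymmetrization using the symmetry of the metric averages. After regrouping I expect this piece to generate exactly the same boundary-node contributions and the same $\avg{\spacevec B}\jump{\wDotJan}$ volume terms as the EC flux contribution, but with the opposite sign.

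The main obstacle will be the bookkeeping required to verify that the residual $\avg{\spacevec B}\jump{\wDotJan}$ terms coming out of the split-form MHD contraction match exactly the corresponding terms from the Powell discretization in each of the three curvilinear reference directions, and that the boundary-node pieces produced by $\qmat + \qmat^T = \bmat$ likewise cancel between the two contributions. The discrete metric identities \eqref{eq:DiscreteMetricIdentities} should be essential here, since they guarantee that the $\qmat$-weighted sums of $J\spacevec{a}^{\,i}$ collapse correctly, which is precisely what underwrites the pointwise-like cancellation predicted by the continuous identity. I expect the delicate part to be tracking the nine MHD components of the EC flux against the block structure of $\JanC$ in a fully three-dimensional curvilinear setting, where the interplay between the averaged metrics and the two-point averages in $\avg{\spacevec B}$ and $\avg{\wDotJan}$ must be aligned so that the cancellation is exact rather than merely approximate.
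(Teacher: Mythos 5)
Your plan follows the paper's route in all its essentials: split-form expansion, absorbing the weights into $\qmat$, the SBP antisymmetrization $2\qmat = \qmat-\qmat^T+\bmat$ exploiting the symmetry of $\bigstatevec{F}^{\ec,\supMHD}$ and of $\avg{J\spacevec{a}^{\,i}}$, the discrete EC condition \eqref{eq:MHDPartDiscrete}, the pointwise contraction $\statevec{W}^T\Jan=\wDotJan$ for the Powell term, and the discrete metric identities \eqref{eq:DiscreteMetricIdentities} at the end. However, two points of the cancellation structure you predict are not what actually happens, and the second would stall a literal execution of your plan. First, on a curvilinear element the jumps $\jump{\spacevec{\Psi}^{\supMHD}}$ do \emph{not} telescope to boundary-node contributions only: because they are paired with the averaged metrics $\avg{J\spacevec{a}^{\,i}}$, the SBP manipulation leaves, besides $\bmat$-terms, genuine volume residues of the form $\sum_{i,m}\qmat_{im}\,\spacevec{\Psi}^{\supMHD}_{ijk}\cdot\left(J\spacevec{a}^{\,1}\right)_{mjk}$ (and $\eta$-, $\zeta$-analogues), which are killed only at the very end by \eqref{eq:DiscreteMetricIdentities}, together with a companion residue $-\tfrac12\wDotJan_{ijk}\spacevec{B}_{ijk}$ from the non-conservative part of the EC condition.

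Second, the Powell volume term should be left in its one-sided form: after the contraction $\statevec{W}^T_{ijk}\Jan_{ijk}=\wDotJan_{ijk}$ it reads $\sum_{i,m}\qmat_{im}\,\wDotJan_{ijk}\left(\spacevec{B}_{mjk}\cdot\avg{J\spacevec{a}^{\,1}}_{(i,m)jk}\right)$ and cancels \emph{verbatim} one specific leftover of the EC-flux manipulation, while the boundary-matrix terms cancel internally on the EC-flux side alone (the $\bmat$-term produced by the antisymmetrization against those produced when the $\spacevec{\Psi}^{\supMHD}$ and $\wDotJan\spacevec{B}$ pieces are reindexed). If you antisymmetrize the Powell sum as you propose, note that the summand $\wDotJan_{ijk}\spacevec{B}_{mjk}$ is not symmetric in $(i,m)$, so you obtain volume terms proportional to $\tfrac12\left(\avg{\wDotJan}\jump{\spacevec{B}}-\avg{\spacevec{B}}\jump{\wDotJan}\right)$ plus its own boundary terms: the $\avg{\spacevec{B}}\jump{\wDotJan}$ piece carries only half the weight needed to cancel the EC-side residue, an extra $\avg{\wDotJan}\jump{\spacevec{B}}$ piece appears with no counterpart on the EC side (that combination only shows up later, at element \emph{interfaces}, in Lemma \ref{lem:nonConsSurf} and Corollary \ref{cor:EC_surf}), and the boundary pieces $\tfrac12\bmat_{ii}\wDotJan_{i}\spacevec{B}_{i}$ do not match the EC-side ones $\bmat_{ii}\left(\spacevec{\Psi}^{\supMHD}_{i}-\wDotJan_{i}\spacevec{B}_{i}\right)$. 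The identity you are proving is of course still true, but the ``equal and opposite'' term-by-term matching will not materialize; closing the argument requires the consistency manipulations of the paper (rows of $\qmat$ summing to zero to split the averaged metrics into one-sided pieces) and, finally, the metric-identity cancellation of the $\spacevec{\Psi}^{\supMHD}-\tfrac12\wDotJan\spacevec{B}$ residues.
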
}
\begin{proof}
See \ref{Sec:MHD_vol}.
\end{proof}

\begin{rem}\label{MHD_rem}
In the proof we use the SBP property \eqref{eq:SBPProperty} repeatedly as well as the discrete version of the MHD entropy potential condition \eqref{eq:MHDPartDiscrete}. Moreover, a crucial condition to obtain the desired result on curved elements is that the discrete metric identities \eqref{eq:DiscreteMetricIdentities} are satisfied.
\end{rem}

\change{
\begin{lem}[Entropy contribution of the curvilinear GLM volume terms]\label{lem:GLM_EC_vol}~\newline
The curvilinear GLM volume contributions of \eqref{eq:schemeFinalcontr} reduce to zero in entropy space. That is,
\begin{equation}\label{eq:volInProof}
\iprodN{\DD\bigcontravec F^{\ec,\supGLM},\statevec W} + \iprodN{\GalDT\cdot\DDncgrad\psi,\statevec{W}}= 0.
\end{equation}
\end{lem}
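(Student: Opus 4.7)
The approach mimics the strategy used for the MHD volume contributions in Lemma~\ref{lem:MHD_EC_vol}, but exploits the special algebraic structure of the GLM non-conservative block vector to decouple the two inner products in \eqref{eq:volInProof}. Both contributions will be shown to vanish independently, which makes the GLM case strictly simpler than its MHD counterpart.

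First I would handle the non-conservative term by a pointwise argument. The nontrivial components of $\GalCs_\ell = (0,0,0,0,v_\ell \psi, 0,0,0, v_\ell)^T$ contract with the entropy variables \eqref{entrvars} to give
\begin{equation*}
\statevec W^T \GalCs_\ell = (-2\beta)(v_\ell \psi) + (2\beta \psi)(v_\ell) = 0, \quad \ell=1,2,3,
\end{equation*}
at every LGL node, which is the nodal analogue of the continuous identity \eqref{eq:contractSourceGLM}. In the definition \eqref{eq:nonConsGradient} the factors $J\spacevec a^i_{ijk}\cdot\GalD_{ijk}$ are frozen at the collocation node $ijk$ while the derivative matrix acts only on the $\psi_{mjk}$ factor, so the scalar $\statevec W^T_{ijk}(J\spacevec a^i_{ijk}\cdot \GalD_{ijk})=0$ can be pulled out of every inner sum. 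Hence $\iprodN{\GalDT\cdot\DDncgrad\psi,\statevec W}=0$ term by term, without any SBP manipulation.

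Next I would show that $\iprodN{\DD \bigcontravec F^{\ec,\supGLM},\statevec W}=0$ using the standard SBP manipulation employed in the proof of Lemma~\ref{lem:MHD_EC_vol}. In each reference direction, the SBP property $\qmat+\qmat^T=\bmat$ combined with the symmetry of the two-point flux $\bigstatevec F^{\ec,\supGLM}$ and of the arithmetic metric average $\avg{J\spacevec a^i}_{(i,m)jk}$ rewrites the split-form contribution \eqref{eq:entropy-cons_volint} as an interior sum involving $\qmat_{im}\jump{\statevec W}^T(\bigstatevec F^{\ec,\supGLM}\cdot\avg{J\spacevec a^i})$ plus boundary evaluations weighted by $\bmat$. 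Substituting the GLM entropy conservation condition \eqref{eq:GLMPartDiscrete}, $\jump{\statevec W}^T\bigstatevec F^{\ec,\supGLM}=\jump{\spacevec\Psi^{\supGLM}}$, turns the interior contributions into telescoping sums whose residual pieces collapse by the discrete metric identities \eqref{eq:DiscreteMetricIdentities}. The remaining boundary contributions, by consistency of the two-point flux $\bigstatevec F^{\ec,\supGLM}(\statevec U,\statevec U)=\bigstatevec f^{a,\supGLM}(\statevec U)$, reduce to nodal combinations of the form $\statevec W^T\bigstatevec f^{a,\supGLM}-\spacevec\Psi^{\supGLM}$, which vanish identically by the very definition \eqref{eq:GLMEntFluxPot} of the GLM entropy flux potential.

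The crucial simplification compared with the MHD case is that no $\wDotJan\spacevec B$ correction appears in $\spacevec\Psi^{\supGLM}$, so the surface residue closes by itself and the non-conservative term need not actively compensate it. The main obstacle in writing out the details will be the three-dimensional curvilinear bookkeeping: the arithmetic metric averages $\avg{J\spacevec a^i}_{(i,m)jk}$ interact nontrivially with the SBP manipulations, and the discrete metric identities have to be invoked at the right moment so that the interior telescoping really does collapse. Once this algebraic chore is carried out and combined with the pointwise vanishing of the non-conservative term, identity \eqref{eq:volInProof} follows.
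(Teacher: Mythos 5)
Your proposal follows essentially the same route as the paper's proof in \ref{Sec:GLM_vol}: the non-conservative GLM term vanishes pointwise because $\statevec W^T\GalCs_\ell=0$ at every LGL node (no SBP manipulation needed), while the $\bigcontravec F^{\ec,\supGLM}$ contribution is handled by the SBP split, the condition \eqref{eq:GLMPartDiscrete}, cancellation of the $\bmat$ boundary pieces via consistency of the two-point flux with $\spacevec\Psi^{\supGLM}=\statevec w^T\bigstatevec f^{a,\supGLM}$, and finally the discrete metric identities \eqref{eq:DiscreteMetricIdentities}. The argument and its key ingredients coincide with the paper's, so no further comparison is needed.
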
}
\begin{proof}
See \ref{Sec:GLM_vol}.
\end{proof}

\begin{rem}\label{GLM_rem}
Again, the most important requirements for the proof are the SBP property \eqref{eq:SBPProperty}, the discrete GLM entropy flux condition \eqref{eq:GLMPartDiscrete} and the discrete metric identities \eqref{eq:DiscreteMetricIdentities}.
\end{rem}

\begin{rem}\label{ES_damp}
If we also take the damping source term of the GLM divergence cleaning into account, the statement of Lemma \ref{lem:GLM_EC_vol} becomes an inequality, i.e.
\begin{equation}\label{eq:volInProofDamp}
-\iprodN{\DD\bigcontravec F^{\ec,\supGLM},\statevec W} \change{-\iprodN{\GalDT\cdot\DDncgrad\psi,\statevec{W}}}+ \iprodN{ \interpolant{J} \statevec{R}, \statevec{W}} \leq 0,
\end{equation}
since
\begin{equation}
 \iprodN{ \interpolant{J} \statevec{R},\statevec{W} }=  - \sum\limits_{i,j,k=0}^N J_{ijk} \omega_{ijk} \left(2 \alpha \beta_{ijk} \psi_{ijk}^2 \right)\leq  0,
\end{equation}
for $\alpha,\beta_{ijk} \geq 0$. This result corresponds to discrete entropy stability instead of conservation and will be excluded for the following discussion of the remaining advective parts.
\end{rem}

All together, this leads us to the following result:
\begin{cor}[Entropy contribution of the curvilinear advective volume terms]\label{cor:EC_vol}~\newline
For each element the sum of all curvilinear advective volume contributions plus the non-conservative volume terms in \eqref{eq:schemeFinalcontr} yields
\begin{equation}
\change{\iprodN{\DD \bigcontravec{F}^\ec, \statevec{W}} + \iprodN{\Jan\DDncdiv\contraspacevec{B},\statevec{W}} + \iprodN{\GalDT\cdot\DDncgrad \psi,\statevec{W}}} = \isurfEN \left(\spacevec{F}^\ent\cdot\spacevec{n}\right) \hat{s}\dS.
\end{equation}
\end{cor}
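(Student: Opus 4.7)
The plan is to decompose the advective EC volume flux into its three pieces via \eqref{ECfluxsplit} and exploit the two preceding lemmas to kill the ideal MHD and GLM blocks outright. Explicitly, Lemma~\ref{lem:MHD_EC_vol} gives
\begin{equation*}
\iprodN{\DD\bigcontravec F^{\ec,\supMHD},\statevec W} + \iprodN{\Jan\DDncdiv\contraspacevec{B},\statevec{W}} = 0,
\end{equation*}
and Lemma~\ref{lem:GLM_EC_vol} gives
\begin{equation*}
\iprodN{\DD\bigcontravec F^{\ec,\supGLM},\statevec W} + \iprodN{\GalDT\cdot\DDncgrad\psi,\statevec{W}} = 0.
\end{equation*}
After applying both, only $\iprodN{\DD\bigcontravec F^{\ec,\supEuler},\statevec W}$ remains and I must show that it equals the surface integral of $\spacevec F^{\ent}\cdot\spacevec n$.

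For the leftover Euler piece I would mirror the strategy of the two appendix proofs: insert the split-form volume definition \eqref{eq:entropy-cons_volint} specialized to $\bigstatevec F^{\ec,\supEuler}$ and expand the discrete inner product with $\statevec W$ node by node. The symmetry of the two-point flux together with relabelling lets me pair opposing $(i,m)$ contributions so that each pair is contracted against $\jump{\statevec W}$. Invoking the Euler entropy conservation condition \eqref{eq:EulerPartDiscrete} rewrites every such pairing as a jump of the Euler entropy flux potential $\spacevec\Psi^{\supEuler}$ weighted by the arithmetic mean of the metrics $\avg{J\spacevec a^{\,i}}$. Using the SBP property \eqref{eq:SBPProperty} these telescope into a surface contribution of $\spacevec\Psi^{\supEuler}$ plus residual volume contributions proportional to the discrete divergence of the metric coefficients, which vanish exactly by the discrete metric identities \eqref{eq:DiscreteMetricIdentities}. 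Finally, consistency $\bigstatevec F^{\ec,\supEuler}(\statevec U,\statevec U)=\bigstatevec f^{a,\supEuler}(\statevec U)$ together with the definition $\spacevec\Psi^{\supEuler}=\statevec w^T\bigstatevec f^{a,\supEuler}-\spacevec f^{\,\ent}$ in \eqref{eq:EulerEntFluxPot} collapses the boundary contribution to $\isurfEN (\spacevec F^{\ent}\cdot\spacevec n)\,\hat s\dS$, as desired. Summing the three contributions produces the claimed identity.

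The genuine obstacle is the same bookkeeping that already powers the two volume lemmas: on a curvilinear element the SBP manipulation generates cross terms in which the derivative matrix hits the metric coefficients, and these must be shown to cancel or vanish. For the MHD and GLM blocks the cancellation is partly achieved by the Powell and Galilean non-conservative terms and partly by the discrete metric identities. For the Euler block there is no non-conservative partner, so the metric-identity cancellation must do the entire job by itself. Fortunately, the Euler sub-system is essentially the compressible Euler case already handled on curved grids in \cite{Gassner2017,carpenter_esdg}, so the argument reduces to a direct transcription of those manipulations with the entropy conservation condition \eqref{eq:EulerPartDiscrete} replacing its Navier--Stokes counterpart. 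The corollary then follows from adding the Euler result to the two lemma statements.
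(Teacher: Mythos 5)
Your proposal is correct and follows essentially the same route as the paper: split the EC volume flux via \eqref{ECfluxsplit}, cancel the ideal MHD and GLM blocks together with their non-conservative partners using Lemmas~\ref{lem:MHD_EC_vol} and \ref{lem:GLM_EC_vol}, and reduce the remaining Euler block to the surface entropy flux. The only difference is that you sketch the Euler manipulation (SBP, condition \eqref{eq:EulerPartDiscrete}, discrete metric identities, flux consistency) explicitly, whereas the paper simply cites \cite{fisher2013,Gassner2017} for that step; your sketch is consistent with those references.
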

\begin{proof}
Again we first split the volume flux in Euler, MHD and GLM parts according to \eqref{ECfluxsplit}. From Lemmas \ref{lem:MHD_EC_vol} and \ref{lem:GLM_EC_vol} we know, that the curvilinear MHD and GLM volume terms together with the non-conservative terms vanish. Moreover, we know from \cite{fisher2013,Gassner2017}, that the volume contributions of the Euler components become the entropy flux evaluated at the boundary 
\begin{equation}\label{eq:volSurfTerms}
\iprodN{\DD \bigcontravec F^{\ec,\supEuler},\statevec W} 
= \isurfEN \left(\spacevec{F}^\ent\cdot\spacevec{n}\right) \hat{s} \dS,
\end{equation}
which is equivalent to the steps \eqref{eq:fluxContractionEuler} and \eqref{eq:appliedGaussLaw} in the continuous analysis with $\spacevec{F}^\ent$ being the discrete evaluation of the entropy flux.
\end{proof}

The results of Lemmas \ref{lem:MHD_EC_vol}, \ref{lem:GLM_EC_vol} and Corollary \ref{cor:EC_vol} demonstrate that many of the volume contributions cancel in entropy space and the remaining terms move to the interfaces of the contracted DG approximation. Thus, in the next section we include this additional interface contribution containing the entropy fluxes.

%%%%%%%%%%%%%%%%%%%%%%%%%%%%%%%%%%%%%%%%%%%%%%%%%%%%%%%%%%%%%%%%%%%%%%%%
\subsubsection{Surface contributions}\label{Sec:Surf}
%%%%%%%%%%%%%%%%%%%%%%%%%%%%%%%%%%%%%%%%%%%%%%%%%%%%%%%%%%%%%%%%%%%%%%%%

We are now prepared to examine the advective surface terms of the contracted DG approximation \eqref{eq:schemeFinalcontr} incorporating the now known additional surface part that comes from the volume terms due to the result of Corollary \ref{cor:EC_vol}. On each element the surface terms are given in compact notation as
\begin{equation}\label{eq:surfaceTermsOnK}
\begin{aligned}
\Gamma_\nu = 
\isurfEnuN  \statevec{W}^T \left[\statevec{F}_n^{\ec}-\statevec{F}_n^a \right]&\hat{s} \dS \change{+ \isurfEnuN   \statevec{W}^T \left[\Bstar - \JanD \Bn\right]\hat{s} \dS} 
\\& \change{+ \isurfEnuN   \statevec{W}^T \left[\Psistar - \GalDn\psi\right]\hat{s} \dS}  +  \isurfEnuN  \left(\spacevec{F}^\ent\cdot\spacevec{n}\right) \hat{s}\dS.
\end{aligned}
\end{equation}
To determine the total surface contributions from the advective and non-conservative terms in the contracted DG approximation \eqref{eq:schemeFinalcontr} we sum over all elements, $\nu=1,\ldots,N_{\mathrm{el}}$ similar to Gassner et al. \cite{Gassner2017}. We introduce notation for states at the LGL node of the one side of the  interface between two elements to be a primary ``$\ma$'' and complement the notation with a secondary ``$\sl$'' to denote the value at the LGL nodes on the opposite side. This allows us to define the orientated jump and the arithmetic mean at the interfaces to be
\begin{equation}\label{eq:jumpNotation}
\jump{\cdot} = (\cdot)^{\sl} - (\cdot)^{\ma},\quad \avg{\cdot} = \frac{1}{2}\left((\cdot)^{\sl}+(\cdot)^{\ma}\right).
\end{equation}
When applied to vectors, the average and jump operators are evaluated separately for each vector component. The physical normal vector $\spacevec{n}$ is then defined uniquely to point from the ``$\ma$'' to the ``$\sl$'' side, so that $\spacevec{n}=(\spacevec{n})^\ma=-(\spacevec{n})^\sl$. 

We consider the discrete total entropy evolution in a closed system and thus focus on fully periodic domains, so that all interfaces in the domain have two adjacent elements. We investigate the total surface contributions from \eqref{eq:surfaceTermsOnK} term by term. The sum over all elements for the first term generates jumps in the fluxes and entropy variables, where we also use the uniqueness of the numerical surface flux function, $\statevec F_n^{\ec}=\bigstatevec{F}^{\ec}\cdot\spacevec{n}$, yielding
\begin{equation}\label{eq:surfaceTermsOnK2}
\sum\limits_{\nu=1}^{N_{\mathrm{el}}}\;\;\; \isurfEnuN  \statevec{W}^T \left(\bigstatevec{F}^{\ec}-\bigstatevec{F}^a \right)\cdot \spacevec{n} \,\hat{s} \dS 
=-\sum\limits_{\interiorfaces}\isurfN  \left(\jump{\statevec{W}}^T\left(\bigstatevec{F}^{\ec}\cdot\spacevec n\right) - \jump{\statevec{W}^T\bigstatevec{F}^a}\cdot\spacevec{n}\right)\,\hat{s}\dS.
\end{equation}
%Note that the normal vector from the primary side and from the secondary side are identical except for the opposite sign. Technically, it doesn't matter for the unique normal vector in the interface contribution if we use either the primary $\spacevec{n}^\ma$ or the secondary $\spacevec{n}^\sl$. Hence, we just denote the normal by $\spacevec{n}$ for convenience.

Next, we examine the behavior of the GLM part of the entropy conservative flux at the interfaces that come from \eqref{eq:surfaceTermsOnK}. \change{Also, we account for the surface contribution of the GLM non-conservative  term \eqref{NC_GLM}. 
\begin{lem}[Entropy contribution of GLM surface terms]\label{lem:GLM_EC_surf}~\newline
The contribution from the GLM part of the entropy conservative scheme vanishes at element interfaces, i.e.,
\begin{equation}\label{eq:GLMsurfFlux}
\isurfN  \left(\jump{\statevec W}^T \bigstatevec{F}^{\ec,\supGLM} 
- \jump{\statevec{W}^T\,\bigstatevec{F}^{a,\supGLM}}\right)\cdot\spacevec{n}\,\hat{s}\dS = 0.
\end{equation}
For the non-conservative term surface contribution we define the interface coupling as
\begin{equation}\label{eq:GalInterface}
\Psistar = \left(\left(\GalD\right)^{\ma}\cdot\spacevec{n}\right)\avg{\psi}
\end{equation}
to ensure that the associated non-conservative terms from \eqref{eq:surfaceTermsOnK} vanish locally at each element face.
\end{lem}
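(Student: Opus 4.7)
The plan is to handle the two claims separately, exploiting that both reduce to pointwise identities at each interface LGL node rather than global summation arguments.

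For the vanishing of \eqref{eq:GLMsurfFlux}, I would appeal to the discrete GLM entropy flux potential condition \eqref{eq:GLMPartDiscrete}, which states $\jump{\statevec{W}}^T\bigstatevec{F}^{\ec,\supGLM} = \jump{\spacevec{\Psi}^{\supGLM}}$ pointwise along the face. Since $\spacevec{n}$ is a single shared vector at the interface (by the convention of pointing from $\ma$ to $\sl$), it commutes with the scalar jump operator. Dotting with $\spacevec{n}$ therefore yields
\begin{equation*}
\jump{\statevec{W}}^T\bigstatevec{F}^{\ec,\supGLM}\cdot\spacevec{n} \;=\; \jump{\spacevec{\Psi}^{\supGLM}\cdot\spacevec{n}}.
\end{equation*}
Substituting the definition of the GLM entropy flux potential from \eqref{eq:GLMEntFluxPot} converts the right-hand side into exactly $\jump{\statevec{W}^T\bigstatevec{F}^{a,\supGLM}}\cdot\spacevec{n}$, so the integrand in \eqref{eq:GLMsurfFlux} is identically zero pointwise and the surface integral vanishes.

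For the non-conservative surface contribution, the strategy is to substitute the proposed $\Psistar = ((\GalD)^{\ma}\cdot\spacevec{n})\avg{\psi}$ into the element-local integrand $\statevec{W}^T[\Psistar - \GalDs_n\psi]$. Viewing each element as the ``interior'' (i.e. identifying its own trace with the $\ma$ state and its outward normal with $\spacevec{n}$), the two terms share the common factor $(\GalD\cdot\spacevec{n})$ evaluated on the interior trace, leaving
\begin{equation*}
\statevec{W}^T\!\left[\Psistar - \GalDs_n \psi\right] \;=\; \tfrac{1}{2}\,\bigl(\statevec{W}^T\GalD\bigr)\cdot\spacevec{n}\,\jump{\psi}.
\end{equation*}
The final step is to invoke the pointwise kernel identity $\statevec{W}^T\GalD = \spacevec{0}$, which is nothing but the discrete counterpart of \eqref{eq:contractSourceGLM}: inspecting the nonzero entries of $\GalCs_\ell$ (in slots five and nine) against \eqref{entrvars} collapses $w_5 v_\ell \psi + w_9 v_\ell = -2\beta v_\ell\psi + 2\beta\psi v_\ell$ to zero node-by-node. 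Hence the local integrand vanishes at every interface LGL node, so the cancellation is genuinely local at each face and requires no coupling to the neighboring element.

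The main obstacle is purely one of orientation bookkeeping: one has to be careful that the $\ma$ label and the normal $\spacevec{n}$ appearing in $\Psistar$ really correspond to the element whose surface integral is being evaluated, so that $(\GalD\cdot\spacevec{n})$ factors cleanly out of the difference $\Psistar - \GalDs_n\psi$ on each side. Once that is settled, the argument needs no summation-by-parts, no metric identities, and no structure from the two-point volume flux, in sharp contrast to the more involved proofs of Lemmas~\ref{lem:MHD_EC_vol} and~\ref{lem:GLM_EC_vol}.
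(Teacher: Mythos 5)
Your proposal is correct and follows essentially the same route as the paper: the flux part vanishes pointwise by combining the discrete GLM entropy conservation condition \eqref{eq:GLMPartDiscrete} with the definition \eqref{eq:GLMEntFluxPot} of $\spacevec{\Psi}^{\supGLM}$, and the non-conservative part vanishes locally after substituting \eqref{eq:GalInterface}, factoring out $\left(\GalD\right)^{\ma}\cdot\spacevec{n}$ to produce $\tfrac12\jump{\psi}$, and invoking the nodal kernel identity $\left(\statevec{W}^{\ma}\right)^T\left(\GalDs_\ell\right)^{\ma}=0$, which you verify explicitly just as the paper asserts. No gaps.
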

\begin{proof}
The proof of \eqref{eq:GLMsurfFlux} follows directly from the definition of the GLM components of the entropy conservative flux \eqref{eq:GLMPartDiscrete} 
\begin{equation}
\left(\jump{\statevec W}^T \bigstatevec{F}^{\ec,\supGLM}  
- \jump{\statevec{W}^T\,\bigstatevec{F}^{a,\supGLM}}\right)\cdot\spacevec{n}
= \left(\jump{\statevec W}^T \bigstatevec{F}^{\ec,\supGLM} - \jump{\spacevec{\Psi}^{\supGLM}}\right)\cdot\spacevec{n} = 0.
\end{equation}
To demonstrate the behavior of the non-conservative term on each element face we examine the appropriate part from \eqref{eq:surfaceTermsOnK} and substitute the coupling term \eqref{eq:GalInterface} at a single interface to find
\begin{equation}
\begin{aligned}
\statevec{W}^T \left[\Psistar - \GalDn\psi\right] &= \left(\statevec{W}^{\ma}\right)^T\left[\left(\left(\GalD\right)^{\ma}\cdot\spacevec{n}\right)\avg{\psi} - \left(\left(\GalD\right)^{\ma}\cdot\spacevec{n}\right)\psi^{\ma}\right] \\
&= \left(\left[\left(\statevec{W}^{\ma}\right)^T\left(\GalD\right)^{\ma}\right]\cdot\spacevec{n}\right)\frac{1}{2}\left(\psi^{\sl} - \psi{\ma}\right)\\
&= \left(\left[\left(\statevec{W}^{\ma}\right)^T\left(\GalD\right)^{\ma}\right]\cdot\spacevec{n}\right)\frac{1}{2}\jump{\psi}.
\end{aligned}
\end{equation}
It is straightforward to verify that each part of $\left(\GalD\right)^{\ma}$ contracts to zero in entropy space, i.e.,
\begin{equation}
\left(\statevec{W}^{\ma}\right)^T\left(\GalDs_\ell\right)^{\ma}=0,\quad \ell=1,2,3,
\end{equation}
such that
\begin{equation}
\begin{aligned}
\statevec{W}^T \left[\Psistar - \GalDn\psi\right] &= \left(\left[\left(\statevec{W}^{\ma}\right)^T\left(\GalD\right)^{\ma}\right]\cdot\spacevec{n}\right)\frac{1}{2}\jump{\psi}\\
&= \left(\spacevec{0}\cdot\spacevec{n}\right)\frac{1}{2}\jump{\psi}\\
&=0.
\end{aligned}
\end{equation}
Thus, the surface contribution of the GLM non-conservative terms directly vanish at each element face. 
\end{proof}}

\change{Before we investigate the remaining contributions of the Euler and ideal MHD components, we define} $\Bstar$ and examine the contribution of the second term from \eqref{eq:surfaceTermsOnK}. What we will find is that the surface contribution of the MHD non-conservative terms generates an additional boundary term that cancels an extraneous term left over from the analysis of the ideal MHD part of the advective fluxes.
\begin{lem}[Discretization of the non-conservative \change{ideal MHD} surface term]\label{lem:nonConsSurf}~\newline
For the second term in \eqref{eq:surfaceTermsOnK} we define 
\begin{equation}\label{eq:surfNonCons}
\Bstar = \left(\JanD\right)^{\ma}\avg{\spacevec{B}}\cdot\spacevec{n},
\end{equation}
to obtain the total contribution of the non-conservative \change{ideal MHD} surface terms
\begin{equation}
 \sum\limits_{\nu=1}^{N_{\mathrm{el}}}\;\;\; \isurfEnuN   \statevec{W}^T \left(\Bstar - \JanD\Bn\right) \hat{s} \dS 
 =  \sum\limits_{\interiorfaces}\isurfN  \avg{\wDotJan}\jump{\spacevec{B}}\cdot\spacevec{n}\,\hat{s}\dS.
\end{equation}
\end{lem}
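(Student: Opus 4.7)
The plan is to substitute the proposed coupling \eqref{eq:surfNonCons} into the left-hand side and reduce the sum over elements to a sum over shared interior faces. On a single element face with outward normal $\spacevec{n}$ (treating the element under consideration as the ``$\ma$'' side and its neighbor as ``$\sl$''), the integrand can be simplified using the contraction identity $\statevec{W}^T\JanD = \wDotJan$ from \eqref{eq:contractSource}:
\begin{equation*}
\statevec{W}^T\left[\Bstar - \JanD\,\Bn\right]
= \left(\statevec{W}^{\ma}\right)^T(\JanD)^{\ma}\left[\avg{\spacevec{B}} - \spacevec{B}^{\ma}\right]\cdot\spacevec{n}
= \tfrac{1}{2}\,\wDotJan^{\ma}\,\jump{\spacevec{B}}\cdot\spacevec{n},
\end{equation*}
where I have used $\avg{\spacevec{B}} - \spacevec{B}^{\ma} = \tfrac{1}{2}\jump{\spacevec{B}}$ with the orientation convention in \eqref{eq:jumpNotation}.

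Next, I would exploit the assumption of a closed (periodic) domain so that the surface sum $\sum_\nu\isurfEnuN$ can be reorganized into a sum over interior faces, each visited twice — once from each adjacent element. When the same face is processed from the neighbor, the roles of ``$\ma$'' and ``$\sl$'' swap and the outward normal flips sign, so that the neighbor's contribution becomes
\begin{equation*}
\left(\statevec{W}^{\sl}\right)^T(\JanD)^{\sl}\left[\avg{\spacevec{B}} - \spacevec{B}^{\sl}\right]\cdot(-\spacevec{n})
= \wDotJan^{\sl}\left(-\tfrac{1}{2}\jump{\spacevec{B}}\right)\cdot(-\spacevec{n})
= \tfrac{1}{2}\,\wDotJan^{\sl}\,\jump{\spacevec{B}}\cdot\spacevec{n}.
\end{equation*}
Adding the two sides at each interior face gives $\tfrac{1}{2}\left(\wDotJan^{\ma}+\wDotJan^{\sl}\right)\jump{\spacevec{B}}\cdot\spacevec{n} = \avg{\wDotJan}\,\jump{\spacevec{B}}\cdot\spacevec{n}$, and applying the LGL surface quadrature yields exactly the stated right-hand side.

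The main subtlety, rather than the algebra itself, is the careful bookkeeping required by the asymmetric definition of $\Bstar$: it uses the local element's own $(\JanD)^{\ma}$ rather than a symmetric average, so the crucial check is that the sign flip of the normal on the neighbor side combines correctly with the sign flip in $\avg{\spacevec{B}}-\spacevec{B}^{\sl} = -\tfrac{1}{2}\jump{\spacevec{B}}$ to symmetrize the two one-sided contributions into an arithmetic mean $\avg{\wDotJan}$. Periodicity ensures that no unpaired boundary faces are left behind, and the contraction identity \eqref{eq:contractSource} is what makes the magnetic-field jump visible through the entropy variables.
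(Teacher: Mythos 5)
Your proposal is correct and follows essentially the same route as the paper's proof: substitute the one-sided definition of $\Bstar$, contract with the local entropy variables via $\left(\statevec{W}^{\ma}\right)^T\left(\JanD\right)^{\ma}=\wDotJan^{\ma}$ to obtain one half of the magnetic-field jump, and then pair the two one-sided face contributions (with the sign flip of the normal) to produce $\avg{\wDotJan}\jump{\spacevec{B}}\cdot\spacevec{n}$ on each interior face. The bookkeeping you flag as the main subtlety is exactly the step the paper performs in its own summation over elements, so no gap remains.
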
 
\begin{proof}
We first substitute the definition \eqref{eq:surfNonCons} into the second term of \eqref{eq:surfaceTermsOnK}, where, for clarity, we explicitly state that values from the current element $E_\el$ to be primary (``$\ma$''), since $\spacevec{n}$ is outward pointing
\begin{equation}\label{eq:firstStepNonCons}
\isurfEnuN   \statevec{W}^T \left(\Bstar - \JanD\Bn\right)\hat{s} \dS 
= \isurfEnuN   \left(\statevec{W}^{\ma}\right)^T \left(\left(\JanD\right)^{\ma}\avg{\spacevec{B}} - \left(\JanD\right)^{\ma} \spacevec{B}^{\ma} \right)\cdot\spacevec{n}\, \hat{s}\dS.
\end{equation}
Note that the values of $\statevec{W}$ and $\JanD$ in the contribution \eqref{eq:firstStepNonCons} are evaluated from the current element, so we have a discrete version of the property \eqref{eq:contractSource}
\begin{equation}
\left(\statevec{W}^{\ma}\right)^T\left(\JanD\right)^{\ma} = \wDotJan^{\ma}.
\end{equation}
Thus,
\begin{equation}\label{eq:secondStepNonCons}
\isurfEnuN   \left(\statevec{W}^{\ma}\right)^T \left(\left(\JanD\right)^{\ma}\avg{\spacevec{B}} - \left(\JanD\right)^{\ma} \spacevec{B}^{\ma}\right)\cdot\spacevec{n}\, \hat{s} \dS
= \isurfEnuN   \wDotJan^{\ma} \left(\avg{\spacevec{B}} - \spacevec{B}^{\ma}\right)\cdot\spacevec{n}\, \hat{s}\dS.
\end{equation}
Next, we expand the arithmetic mean to get
\begin{equation}\label{eq:thirdStepNonCons}
\isurfEnuN   \wDotJan^{\ma} \left(\avg{\spacevec{B}} -  \spacevec{B}^{\ma}\right)\cdot\spacevec{n}\, \hat{s} \dS
=\frac{1}{2}\isurfEnuN  \wDotJan^{\ma} \left(\spacevec{B}^{\sl} - \spacevec{B}^{\ma}\right)\cdot\spacevec{n} \,\hat{s}\dS.
\end{equation}
The total surface contribution of \eqref{eq:thirdStepNonCons} requires delicate consideration due to the inherent non-uniqueness of the non-conservative term at the interface. Each interface actually contributes twice to the contracted DG approximation and it is important to choose, again, a unique normal vector for each interface $\spacevec{n}$. The sum over all elements gives for an arbitrary interface contribution of the integrand 
\begin{equation}\label{eq:singleFaceTerm}
\frac{1}{2}\left(\wDotJan^{\ma}\left(\spacevec{B}^{\sl}-\spacevec{B}^{\ma}\right)\cdot\spacevec{n}\right)
+\frac{1}{2}\left(\wDotJan^{\sl}\left(\spacevec{B}^{\ma}-\spacevec{B}^{\sl}\right)\cdot(-\spacevec{n})\right)
=\avg{\wDotJan}\jump{\spacevec{B}}\cdot\spacevec{n}\,,
\end{equation}
yielding the desired result
\begin{equation}
 \sum\limits_{\nu=1}^{N_{\mathrm{el}}}\;\;\;\isurfEnuN   \statevec{W}^T \left(\Bstar - \JanD\Bn\right)\hat{s} \dS 
 =  \sum\limits_{\interiorfaces}\isurfN  \avg{\wDotJan}\jump{\spacevec{B}}\cdot\spacevec{n}\,\hat{s}\dS.
\end{equation}

\end{proof}

\begin{rem}
The prescription of non-conservative surface contributions for high-order DG methods have been previously investigated by Cheng and Shu \cite{Cheng2007} in the context of the Hamilton-Jacobi equations. Recently, these methods from the Hamilton-Jacobi community have been applied to approximate the solution of the ideal MHD equations at high-order on two-dimensional Cartesian meshes \cite{Liu2017,Valencia2017}. The current work built upon these previous results to fully generalize the extension of the non-conservative surface contributions into the three-dimensional, unstructured, curvilinear hexahedral mesh framework and re-contextualize the non-conservative surface term discretization based on specific non-conservative numerical surface approximations.
\end{rem}

The sum over all elements on the third term in \eqref{eq:surfaceTermsOnK} generates a jump in the entropy fluxes
\begin{equation}\label{eq:surfEntFluxes}
 \sum\limits_{\nu=1}^{N_{\mathrm{el}}}\;\;\;\isurfEnuN  \left(\spacevec{F}^\ent\cdot\spacevec{n}\right) \hat{s} \dS = -\sum\limits_{\interiorfaces}\isurfN \jump{\spacevec{F}^{\ent}}\cdot\spacevec{n}\,\hat{s}\dS.
\end{equation}
Now, with the results of Lemmas~\ref{lem:GLM_EC_surf} and \ref{lem:nonConsSurf} as well as the results \eqref{eq:surfaceTermsOnK2} and \eqref{eq:surfEntFluxes} we can address the remaining contributions of the Euler and ideal MHD components at the surface:
\begin{cor}[Entropy contributions of total advective surface terms]\label{cor:EC_surf}~\newline
Summing over all elements in \eqref{eq:surfaceTermsOnK} shows that the contribution of the curvilinear advective and non-conservative terms on the surface cancel, meaning
\begin{equation}
\sum_{\nu=1}^{N_{\mathrm{el}}} \Gamma_\nu = 0.
\end{equation}
\end{cor}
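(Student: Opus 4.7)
The plan is to split the entropy-conservative surface flux exactly as in \eqref{ECfluxsplit}, $\bigstatevec{F}^{\ec} = \bigstatevec{F}^{\ec,\supEuler} + \bigstatevec{F}^{\ec,\supMHD} + \bigstatevec{F}^{\ec,\supGLM}$, and partition the physical advective flux analogously, so that the three discrete entropy-conservation conditions \eqref{eq:EulerPartDiscrete}--\eqref{eq:GLMPartDiscrete} can be invoked independently. The GLM contributions are handled immediately by Lemma~\ref{lem:GLM_EC_surf}: both the GLM part of $\statevec{F}_n^{\ec}-\statevec{F}_n^{a}$ and the entire non-conservative $\Psistar$ surface term already vanish locally, so I would drop them from $\Gamma_\nu$ before summing.

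Next, I would assemble the remaining pieces of $\sum_{\nu=1}^{N_{\mathrm{el}}} \Gamma_\nu$ using \eqref{eq:surfaceTermsOnK2} on the Euler-plus-MHD EC-flux term, Lemma~\ref{lem:nonConsSurf} on the Powell non-conservative surface term, and \eqref{eq:surfEntFluxes} on the entropy-flux term, producing a single sum over interior faces of the integrand
\begin{equation*}
\mathcal{I} := -\jump{\statevec{W}}^{T}\bigl(\bigstatevec{F}^{\ec,\supEuler}+\bigstatevec{F}^{\ec,\supMHD}\bigr)\cdot\spacevec{n}
+ \jump{\statevec{W}^{T}(\bigstatevec{F}^{a,\supEuler}+\bigstatevec{F}^{a,\supMHD})}\cdot\spacevec{n}
+ \avg{\wDotJan}\jump{\spacevec{B}}\cdot\spacevec{n}
- \jump{\spacevec{F}^{\ent}}\cdot\spacevec{n}.
\end{equation*}
The goal then reduces to showing $\mathcal{I}=0$ pointwise on every interior face. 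To accomplish this, I would rewrite $\jump{\statevec{W}}^{T}\bigstatevec{F}^{\ec,\supEuler}$ using \eqref{eq:EulerPartDiscrete} and $\jump{\statevec{W}}^{T}\bigstatevec{F}^{\ec,\supMHD}$ using \eqref{eq:MHDPartDiscrete}, then substitute the defining relations \eqref{eq:EulerEntFluxPot}--\eqref{eq:MHDEntFluxPot}, which combine to $\spacevec{\Psi}^{\supEuler}+\spacevec{\Psi}^{\supMHD}=\statevec{w}^{T}(\bigstatevec{f}^{a,\supEuler}+\bigstatevec{f}^{a,\supMHD})-\spacevec{f}^{\,\ent}+\wDotJan\spacevec{B}$, and expand $\jump{\wDotJan\spacevec{B}}$ by the symmetric product-jump identity $\jump{ab}=\avg{a}\jump{b}+\jump{a}\avg{b}$. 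After these substitutions the Euler contributions collapse the $\statevec{W}^{T}\bigstatevec{F}^{a}$ and $\spacevec{F}^{\ent}$ terms, while the MHD contributions produce precisely $-\avg{\wDotJan}\jump{\spacevec{B}}\cdot\spacevec{n}$ (cancelling the Powell term) plus a residue $\avg{\spacevec{B}}\cdot\spacevec{n}\,\jump{\wDotJan}$ that is exactly offset by the $-\jump{\wDotJan}\avg{\spacevec{B}}\cdot\spacevec{n}$ spawned by the product-jump identity, leaving $\mathcal{I}=0$.

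The main obstacle is the delicate pairing between the leftover $-\avg{\spacevec{B}}\jump{\wDotJan}$ inherited from the MHD discrete entropy condition \eqref{eq:MHDPartDiscrete} and the Powell-surface contribution $+\avg{\wDotJan}\jump{\spacevec{B}}$ supplied by Lemma~\ref{lem:nonConsSurf}: their mutual cancellation occurs only through the product-jump identity applied to $\wDotJan\spacevec{B}$ and relies critically on the specific choice \eqref{eq:surfNonCons} of $\Bstar$. A secondary concern is to verify the orientation convention of \eqref{eq:jumpNotation}, so that the two occurrences of every interior face from the $\sum_\nu\Gamma_\nu$ bookkeeping assemble into the single signed jump used above; this follows because $\spacevec{n}$ is the outward normal of the primary element and $\spacevec{n}^{\sl}=-\spacevec{n}^{\ma}$. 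Under the assumed periodic boundary conditions there are no residual physical-boundary face terms, so the pointwise identity $\mathcal{I}=0$ yields $\sum_{\nu=1}^{N_{\mathrm{el}}}\Gamma_\nu = 0$.
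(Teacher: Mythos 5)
Your proposal is correct and follows essentially the same route as the paper's proof: GLM terms removed via Lemma~\ref{lem:GLM_EC_surf}, the Euler part cancelled through \eqref{eq:EulerPartDiscrete} together with \eqref{eq:EulerEntFluxPot}, and the ideal MHD part reduced via \eqref{eq:MHDPartDiscrete}, the potential \eqref{eq:MHDEntFluxPot} and the product-jump identity \eqref{eq:jumpProperty} to the residual $\avg{\wDotJan}\jump{\spacevec{B}}\cdot\spacevec{n}$, which is cancelled by the Powell surface contribution of Lemma~\ref{lem:nonConsSurf}. The only cosmetic difference is that you phrase the cancellation as a single pointwise face identity $\mathcal{I}=0$, whereas the paper tracks the same terms group by group; the algebra and the reliance on the choice \eqref{eq:surfNonCons} of $\Bstar$ are identical.
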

\begin{proof}
We note that from Lemma~\ref{lem:GLM_EC_surf} we have accounted for the cancellation of the GLM terms. Similar to the volume term analysis in Corollary~\ref{cor:EC_vol} we again separate the contributions of the Euler and ideal MHD terms. It is immediate that the Euler terms drop out from the definition of the entropy flux potential for the Euler part \eqref{eq:EulerEntFluxPot} and the separation of the entropy conserving flux condition \eqref{eq:EulerPartDiscrete}
\begin{equation}
\left(\jump{\statevec W}^T \bigstatevec{F}^{\ec,\supEuler} - \jump{\left(\statevec{W}\right)^T\,\bigstatevec{F}^{a,\supEuler}} + \jump{\spacevec{F}^\ent}\right)\cdot\spacevec{n} = 
\left(\jump{\statevec W}^T \bigstatevec{F}^{\ec,\supEuler}  - \jump{\spacevec{\Psi}^{\supEuler}}\right)\cdot\spacevec{n} = 0.
\end{equation}
For the ideal MHD contributions we make use of the entropy flux potential \eqref{eq:MHDEntFluxPot} to write
\begin{equation}\label{eq:manipulateMHD}
\resizebox{0.925\hsize}{!}{$
\begin{aligned}
\left(\jump{\statevec W}^T \bigstatevec{F}^{\ec,\supMHD}  
- \jump{\statevec{W}^T\,\bigstatevec{F}^{a,\supMHD}}\right)\cdot\spacevec{n} &=
\left(\jump{\statevec W}^T\left(\bigstatevec{F}^{\ec,\supMHD}\right) 
- \jump{\spacevec{\Psi}^{\supMHD}} + \jump{\wDotJan \spacevec{B} }\right)\cdot\spacevec{n}\\
&= \left(\jump{\statevec W}^T\left(\bigstatevec{F}^{\ec,\supMHD}\right) 
- \jump{\spacevec{\Psi}^{\supMHD}} + \jump{\wDotJan}\avg{\spacevec{B}} + \avg{\wDotJan}\jump{\spacevec{B}}\right)\cdot\spacevec{n},
\end{aligned} $}
\end{equation}
where we use a property of the jump operator
\begin{equation}\label{eq:jumpProperty}
\jump{ab} = \avg{a}\jump{b} + \avg{b}\jump{a}.
\end{equation}
We see that the first three terms on the last line of \eqref{eq:manipulateMHD} are the entropy conservative flux condition of the magnetic field components \eqref{eq:MHDPartDiscrete} and cancel. This leaves the remainder term
\begin{equation}\label{eq:remainderTermMHD}
-\sum\limits_{\interiorfaces}\isurfN \left(\jump{\statevec W}^T\bigstatevec{F}^{\ec,\supMHD}  
- \jump{\statevec{W}^T\,\bigstatevec{F}^{a,\supMHD}}\right)\cdot\spacevec{n} \hat{s}\dS
= -\sum\limits_{\interiorfaces}\isurfN \avg{\wDotJan}\jump{\spacevec{B}}\cdot\spacevec{n}\hat{s}\dS.
\end{equation}
This term is identical to the surface contribution of the non-conservative term from Lemma \ref{lem:nonConsSurf} but with opposite sign. Thus the final two terms cancel and we get the desired result
\begin{equation}
\sum_{\nu=1}^{N_{\mathrm{el}}} \Gamma_\nu = 0.
\end{equation}
\end{proof}

%%%%%%%%%%%%%%%%%%%%%%%%%%%%%%%%%%%%%%%%%%%%%%%%%%%%%%%%%%%%%%%%%%%%%%%%
\subsection{Analysis of the viscous and resistive parts}\label{Sec:ResParts}
%%%%%%%%%%%%%%%%%%%%%%%%%%%%%%%%%%%%%%%%%%%%%%%%%%%%%%%%%%%%%%%%%%%%%%%%

Lastly, since the discussion of the curvilinear advective and non-conservative parts is now complete, we focus on the resistive parts, namely the last row of the first equation in \eqref{eq:schemeFinalcontr}. Again, we first have to select appropriate numerical fluxes at the interfaces. Thus, we use the computationally simple Bassi-Rebay (BR1) type approximation \cite{Bassi&Rebay:1997:B&F97} in terms of the discrete entropy variables and gradients \cite{Gassner2017}
\begin{equation}\label{eq:BR1Fluxes}
\statevec F_n^{v,*} = \avg{\bigstatevec{F}^v}\cdot\spacevec{n},\quad \statevec{W}^*=\avg{\statevec{W}}.
\end{equation}
With the results from the previous section we are able to prove the main result of this work,

\begin{thm}[Discrete entropy stability of the curvilinear DGSEM for the resistive GLM-MHD equations]\label{thm_discrete}
The curvilinear DGSEM for the resistive GLM-MHD equations \eqref{eq:schemeFinal} with 
\begin{equation}
\change{\Psistar = \left(\left(\GalD\right)^{\ma}\cdot\spacevec{n}\right)\avg{\psi}},\quad 
\Bstar = \left(\JanD\right)^{\ma}\avg{\spacevec{B}}\cdot\spacevec{n}, \quad
\statevec{F}_n^{a,\ast} = \bigstatevec{F}^{a,\#}\cdot\spacevec{n} = \bigstatevec{F}^\ec\cdot\spacevec{n}\quad
\end{equation}
and the viscous interface fluxes \eqref{eq:BR1Fluxes} is entropy stable, i.e. for a closed system (periodic boundary conditions) the discrete total entropy is a decreasing function in time
\begin{equation}
\frac{d\bar{S}}{dt}\leq 0.
\end{equation}
\end{thm}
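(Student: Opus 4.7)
The plan is to contract the strong-form split DG scheme \eqref{eq:schemeFinalcontr} against the interpolated entropy variables, sum the resulting local identities over all elements, and show that every contribution except the viscous/resistive dissipation vanishes or is non-positive. The chain rule together with collocation at the LGL nodes (identity \eqref{totalEntr}) immediately converts $\iprodN{\interpolant{J}\statevec{U}_t,\statevec{W}}$ on element $\nu$ into $\iprodN{\interpolant{J^\nu} S^\nu_t,1}$, so that after summing over $\nu=1,\ldots,N_{\mathrm{el}}$ the left-hand side becomes the discrete total entropy rate $d\bar S/dt$.

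First I would bundle the advective volume, the non-conservative volume, and all advective/non-conservative surface contributions. Corollary \ref{cor:EC_vol} already shows that on each element the sum of the curvilinear Euler, ideal MHD and GLM volume terms plus the two non-conservative volume terms collapses to the surface integral of $\spacevec{F}^{\,\ent}\cdot\spacevec{n}\,\hat s$. This means the entire advective plus non-conservative block reduces to the element-wise surface expression $\Gamma_\nu$ in \eqref{eq:surfaceTermsOnK}. Summing $\Gamma_\nu$ over all elements and invoking Corollary \ref{cor:EC_surf} (which itself rests on Lemmas \ref{lem:GLM_EC_surf} and \ref{lem:nonConsSurf} to dispose of the GLM surface terms and to pair the Powell non-conservative surface coupling with the leftover $\avg{\wDotJan}\jump{\spacevec B}\cdot\spacevec n$ from the ideal MHD entropy-conservation condition) yields exactly zero under periodic boundary conditions. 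The damping source term is then handled directly by Remark \ref{ES_damp}: for $\alpha\ge 0$ and $\beta_{ijk}>0$ its contracted contribution $-2\alpha\beta_{ijk}\psi_{ijk}^2$ is pointwise non-positive.

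The last and heaviest step is the viscous/resistive block, consisting of the volume term $\iprodN{\DDs\bigcontravec F^v,\statevec W}$, its interface correction $\isurfEN\statevec W^T[\statevec F^{v,*}_n-\statevec F^v_n]\hat s\,\mathrm{dS}$, and the auxiliary equation for $\bigstatevec Q$ with $\statevec W^*=\avg{\statevec W}$. Here I would follow the BR1 analysis of Gassner et al.\ on curvilinear grids: apply the SBP property \eqref{eq:SBPProperty} to $\iprodN{\DDs\bigcontravec F^v,\statevec W}$ to trade the volume derivative onto $\statevec W$, so that the element-local viscous contribution rearranges into $-\iprodN{\bigcontravec F^v,\spacevec\nabla_\xi\statevec W}$ plus boundary terms in $\avg{\statevec W}$ and $\statevec W^{\ma,\sl}$. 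Using the second equation of \eqref{eq:schemeFinalcontr} to identify $\spacevec\nabla\statevec W$ with $\bigstatevec Q$ (again via SBP, which moves the derivative in the $\bigstatevec Q$ equation back onto $\statevec W$), the volume integrand becomes $\bigstatevec Q^T\twentysevenMatrix{K}\bigstatevec Q$. By Lemma \ref{lemma1} this integrand is pointwise $\ge 0$, so its negative appears in $d\bar S/dt$. What remains is to sum the BR1 boundary contributions over all interfaces and check that the jump-mean identities telescope so that each interior face produces only a non-positive jump quadratic form (this is the content of Gassner et al.\ \cite{Gassner2017} and is where the choice $\statevec W^* = \avg{\statevec W}$, $\statevec F_n^{v,*}=\avg{\bigstatevec F^v}\cdot\spacevec n$ is essential). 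Combining everything gives $d\bar S/dt\le 0$.

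The main obstacle I expect is bookkeeping the curvilinear BR1 interface terms: one must verify that after SBP on both the primal and auxiliary equations and after summing over elements the cross terms involving $\avg{\statevec W}$ and $\avg{\bigstatevec F^v}\cdot\spacevec n$ combine into a single non-positive quadratic form (controlled by $\twentysevenMatrix{K}$) plus purely conservative fluxes that cancel under periodicity. The diagonal LGL mass matrix, the discrete metric identities \eqref{eq:DiscreteMetricIdentities}, and the symmetry/positivity of $\twentysevenMatrix{K}$ established in Lemma \ref{lemma1} are exactly the three ingredients that make this bookkeeping go through, and with them one concludes $d\bar S/dt\le 0$, completing the proof.
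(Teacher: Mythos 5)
Your proposal is correct and takes essentially the same route as the paper: Corollaries~\ref{cor:EC_vol} and \ref{cor:EC_surf} together with Remark~\ref{ES_damp} remove the advective, non-conservative and damping contributions, and the viscous block is handled by combining the primal equation with the auxiliary equation tested against the viscous flux so that Lemma~\ref{lemma1} supplies the non-negative volume quadratic form, with the BR1 interface terms treated via \cite{Gassner2017}. The only minor difference is in the bookkeeping of the interfaces: with $\statevec{W}^{*}=\avg{\statevec{W}}$ and $\statevec F_n^{v,*}=\avg{\bigstatevec F^v}\cdot\spacevec n$ the summed surface terms cancel identically under periodicity rather than producing a genuinely non-positive jump quadratic form, so all dissipation comes from the volume term $\iprodN{\interpolant{J}\bigstatevec{Q},\twentysevenMatrix{K}\,\bigstatevec{Q}}\geq 0$.
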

\begin{proof}
From Corollaries~\ref{cor:EC_vol} and \ref{cor:EC_surf} we know that the volume, surface and non-conservative terms of the advective portions of the resistive GLM-MHD equations cancel in entropy space. The remaining parts of the contracted DG approximation are
\begin{equation}\label{eq:schemeFinalInProof}
 \begin{aligned}
\iprodN{\interpolant J\statevec U_t,\statevec{W} } 
&= \iprodN{\DDs\bigcontravec F^v,\statevec{W} }
+\isurfEN\statevec{W}^T\left(\statevec F_n^{v,*} - \statevec F_n^{v} \right) \hat{s}\dS + \iprodN{\interpolant J \statevec{R},\statevec{W} }\hfill, \\
\iprodN{\interpolant J\bigstatevec Q,\bigcontravec F^v  }
&= \isurfEN {{{\statevec W}^{*,T}}\left( {\bigstatevec F^v }  \cdot \spacevec{n}\right)\hat{s}\dS}  
- \iprodN{\statevec W, {\DDs \bigcontravec F^v } }. \hfill \\ 
 \end{aligned}
\end{equation}
We consider the first term of the second equation and insert the alternate form of the viscous flux rewritten in terms of the gradient of the entropy variables as in the continuous analysis \eqref{Kgradient}. We use the known property that the viscous and resistive coefficient matrix $\twentysevenMatrix{K}$ is symmetric positive semi-definite for the resistive MHD equations to see that
\begin{equation}
\label{eq:disc_visc_volint_estimate}
\iprodN{\interpolant{J}\bigstatevec{Q},\bigcontravec{F}_v} = \iprodN{\interpolant{J}\bigstatevec{Q},\twentysevenMatrix{K}\,\bigstatevec{Q}}\geqslant \min\limits_{E,N}(\interpolant{J}) \iprodN{\bigstatevec{Q},\twentysevenMatrix{K}\,\bigstatevec{Q}}\geq 0,
\end{equation}
Next, we insert the second equation of \eqref{eq:schemeFinalInProof} into the first and use the estimate \eqref{eq:disc_visc_volint_estimate} to get
\begin{equation}\label{eq:schemeFinalInProof2}
\iprodN{\interpolant J\statevec U_t,\statevec{W} } 
\leq \isurfEN\left(\statevec{W}^T\left(\statevec F_n^{v,*} - \statevec F_n^{v} \right) + {\statevec W^{*,T}}{\statevec F_n^v }  \right)\hat{s}\dS + \iprodN{\interpolant J \statevec{R},\statevec{W} }
\end{equation}
From Remark \ref{ES_damp} we know, that we can also ignore the discrete damping source term without violating the inequality. After summing over all elements we
can replace the left hand side by the total entropy derivative according to \eqref{totalEntr} and obtain
\begin{equation}\label{eq:schemeFinalInProof3}
\frac{d \overline{S}}{d t}
\leq -\sum\limits_\interiorfaces \isurfN \left(\jump{\statevec{W}}^T\,\statevec F_n^{v,*} 
- \jump{\statevec{W}^T\bigstatevec F^{v}}\cdot\spacevec{n}  
+ \left(\statevec{W}^{*}\right)^T\jump{\bigstatevec F^v}\cdot\spacevec{n}\right)\hat{s}\dS.
\end{equation}

In Gassner et al. \cite{Gassner2017} it is shown that for the BR1 choice \eqref{eq:BR1Fluxes} all the surface terms in \eqref{eq:schemeFinalInProof3} vanish identically to zero if periodic boundary conditions are considered, yielding our desired result
\begin{equation}\label{eq:entr_stab_bounds}
\frac{d \overline{S}}{d t} \leq 0,
\end{equation}
which shows that the discrete total entropy is decreasing in time, i.e. that the DGSEM for the resistive GLM-MHD equations is entropy stable.
\end{proof}

\begin{rem}
It is desirable to introduce additional upwind-type dissipation for advection dominated problems through the choice of the numerical advection fluxes by replacing the EC fluxes at element interfaces with e.g. the ES fluxes \eqref{ES_flux}.
\end{rem}

%%%%%%%%%%%%%%%%%%%%%%%%%%%%%%%%%%%%%%%%%%%%%%%%%%%%%%%%%%%%%%%%%%%%%%%%
\section{Numerical verification}\label{Sec:NumVer}
%%%%%%%%%%%%%%%%%%%%%%%%%%%%%%%%%%%%%%%%%%%%%%%%%%%%%%%%%%%%%%%%%%%%%%%%
In this section, we present numerical tests to validate the theoretical findings of the previous sections. We start with a demonstration of the high-order accuracy for the resistive GLM-MHD system by a manufactured solution and the entropy conservation for the ideal GLM-MHD system, both on curved meshes. \change{Next, we verify the GLM divergence cleaning capability of the scheme.} Finally, we provide an example, in which every piece of the presented numerical solver is exercised, to demonstrate the increased robustness and applicability of the entropy stable DG approximation for the resistive GLM-MHD equations. Specifically, we use a three-dimensional, viscous version of the well-known Orszag-Tang vortex, e.g. \cite{Elizarova2015} to show the value of the entropy stable framework in conjunction with GLM hyperbolic divergence cleaning in providing numerical stability.

All numerical computations in this work are performed with the open source, three-dimensional curvilinear split form DG framework for the resistive MHD equations FLUXO (\url{www.github.com/project-fluxo}). FLUXO is written in modern Fortran and its non-blocking pure MPI parallelization shows excellent strong and weak scaling on modern HPC architectures. The three-dimensional high order meshes for the simulations are generated with the open source tool HOPR (\url{www.hopr-project.org}). All simulation results are obtained with an explicit five stage, fourth order accurate low storage Runge-Kutta scheme \cite{Carpenter&Kennedy:1994}, where a stable time step is computed according to the adjustable coefficient CFL$\in(0,1]$, the local maximum wave speed, the viscous eigenvalues, and the relative grid size, e.g. \cite{gassner2011}. Unless otherwise stated, we set the damping parameter $\alpha=0$ and the GLM propagation speed $c_h$ to be proportional to the maximum advective wave speed.

%%%%%%%%%%%%%%%%%%%%%%%%%%%%%%%%%%%%%%%%%%%%%%%%%%%%%%%%%%%%%%%%%%%%%%%%
\subsection{Curved periodic meshes}\label{Sec:periodic_meshes}
%%%%%%%%%%%%%%%%%%%%%%%%%%%%%%%%%%%%%%%%%%%%%%%%%%%%%%%%%%%%%%%%%%%%%%%%
In this work, all numerical results are obtained on fully periodic curved meshes. The discussion on entropy stable boundary conditions for the resistive MHD equations is ongoing research. To generate the curved mesh, we first define the high order element nodes on a standard cartesian mesh with periodic boundary conditions, in the space variables $\spacevec{\chi}=(\chi_1,\chi_2,\chi_3)^T$. 
The curved mesh  is then generated by applying a transformation function to all high order nodes mapping them to physical space $\spacevec{x}$ defined by
\begin{equation}
 \label{eq:meshtransform}
 x_\ell = \chi_\ell+0.1\sin(\pi\chi_1)\sin(\pi\chi_2)\sin(\pi\chi_3)\,,\quad \ell=1,2,3\,.
\end{equation} 
As shown in Figure~\ref{fig:meshes}, two mesh types will be considered throughout this section. Type (a) with flat periodic boundaries and curved element faces inside, using $\vec{\chi}\!\in\![0,1]^3$ and type (b) with curved element interfaces, using $\spacevec{\chi}\!\in\![-0.6,1.4]\!\times\![-0.8,1.2]\!\times\![-0.7,1.3]$, being still fully periodic and conforming.
\begin{figure}[h!]
\begin{center}
\begin{minipage}[b]{0.48\linewidth}\centering
   \includegraphics[height=0.25\textheight]{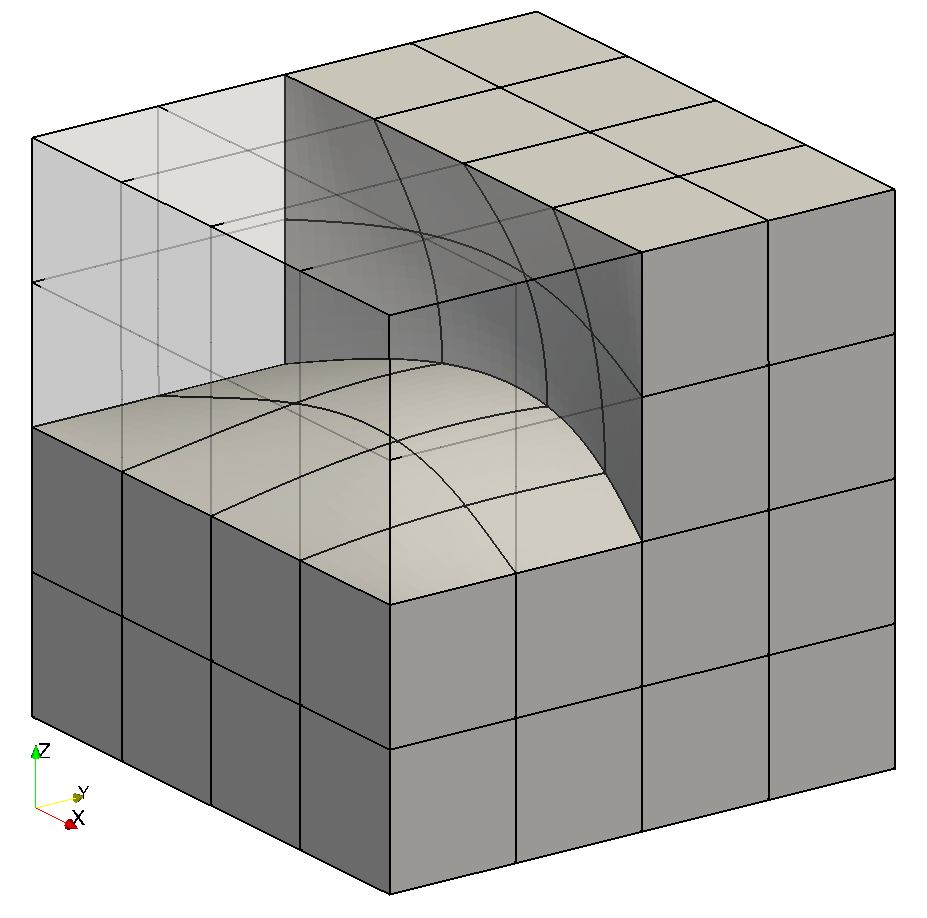}\\[-1ex]
   \textbf{(a) } \\ $\vec{\chi}\!\in\![0,1]^3$
 \end{minipage}
 \begin{minipage}[b]{0.48\textwidth}   \centering
   \includegraphics[height=0.25\textheight]{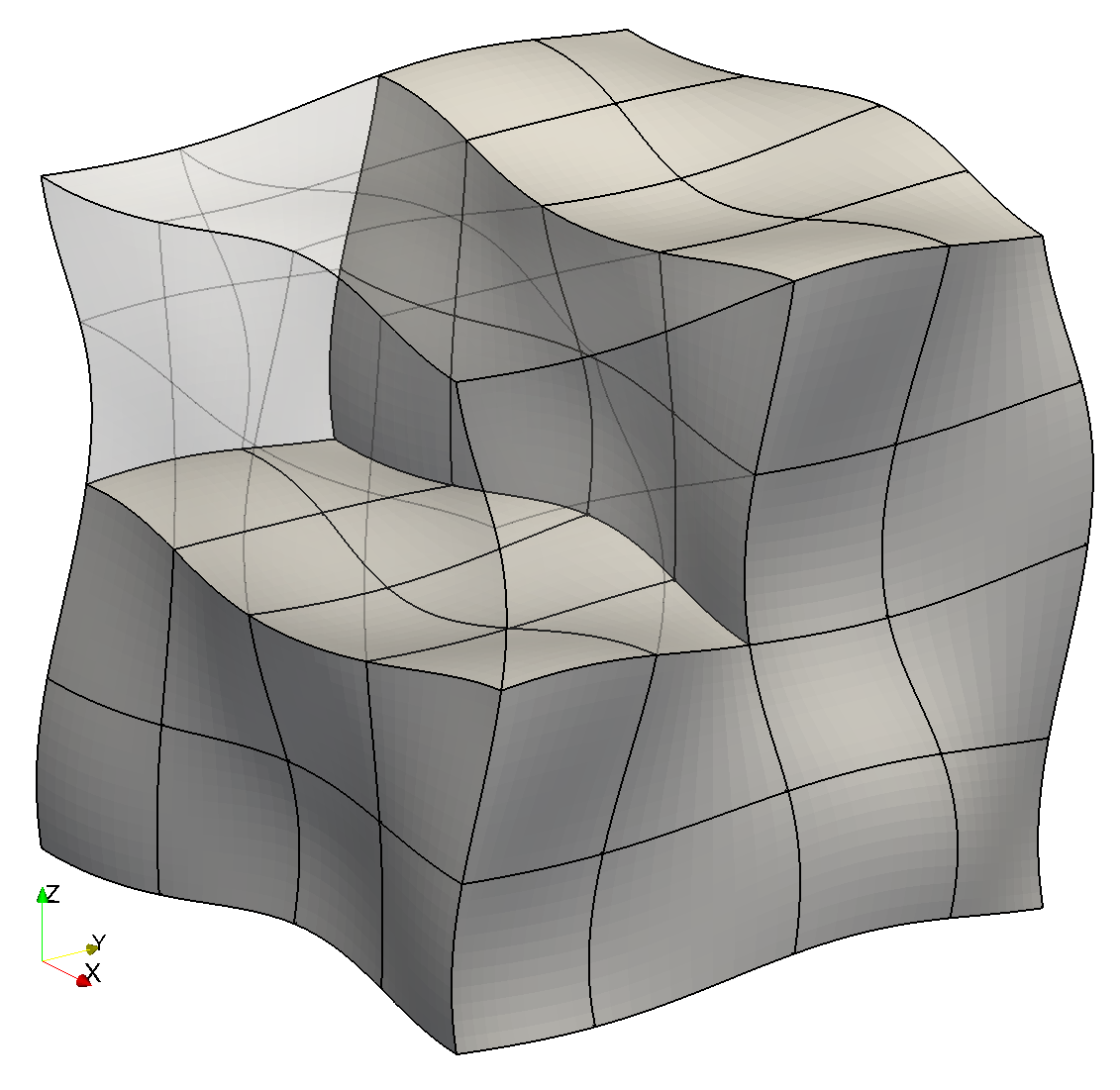}\\[-1ex]
   \textbf{(b)} \\$\spacevec{\chi}\!\in\![-0.6,1.4]\!\times\![-0.8,1.2]\!\times\![-0.7,1.3]$
 \end{minipage}
\caption{Two types of fully periodic curved meshes, shown for $4^3$ elements, generated via the transformation function \eqref{eq:meshtransform}. }
\label{fig:meshes}
\end{center}
\end{figure}

%%%%%%%%%%%%%%%%%%%%%%%%%%%%%%%%%%%%%%%%%%%%%%%%%%%%%%%%%%%%%%%%%%%%%%%%
\subsection{High-Order Convergence on 3D Curvilinear Meshes}\label{Sec:Conv_MS}
%%%%%%%%%%%%%%%%%%%%%%%%%%%%%%%%%%%%%%%%%%%%%%%%%%%%%%%%%%%%%%%%%%%%%%%%

In order to verify the high-order approximation of the entropy stable DG discretization \eqref{eq:schemeFinal} for the resistive GLM-MHD system \eqref{resGLMMHD}, we run a convergence test with the method of manufactured solutions. To do so, we assume a solution of the form
\begin{equation}
\statevec{u} = \left[h,h,h,0,2h^2+h,h,-h,0,0\right]^T \text{ with } h = h(x,y,z,t) = 0.5 \sin(2\pi(x+y+z-t))+2.
\label{manufac}
\end{equation}
The main advantage of this choice is, that it is symmetric and spatial derivatives cancel with temporal derivatives, i.e.
\begin{equation}
h_x=h_y=h_z= -h_t. 
\end{equation} 
Hence, the additional residual for the resistive GLM-MHD system reads
\begin{equation}
\statevec{u}_t + \vec{\nabla} \cdot \bigstatevec{f}^a(\statevec{u}) - \vec{\nabla} \cdot \bigstatevec{f}^v(\statevec{u},\vec{\nabla} \statevec{u}) = \begin{pmatrix} h_x \\ h_x + 4hh_x \\ h_x + 4hh_x \\ 4hh_x \\ h_x + 12hh_x - 6\resistivity(h_x^2+h h_{xx})-6\viscosity h_{xx}/\text{Pr} \\ h_x - 3\resistivity h_{xx} \\ -h_x + 3\resistivity h_{xx} \\ 0 \\ 0 \end{pmatrix}
\label{residual}
\end{equation}
for $\gamma = 2$ and $\text{Pr}\!=\!0.72$. In order to solve the inhomogeneous problem, we subtract the residual from the approximate solution in each Runge-Kutta step. Moreover, we run the test case up to the final time $T\!=\!1.0$ and we set $\viscosity=\resistivity\!=\!0.005$. For all computations we use mesh type (b) from Figure~\ref{fig:meshes} with a varying number of elements.

Finally, we obtain the convergence results illustrated in Tables \ref{EOCres3} and \ref{EOCres4} that confirm the high-order accuracy of the scheme on curvilinear meshes in three spatial dimensions. The errors of $v_2$ and $B_2$ are similar to the ones of $v_1$ and $B_1$ and, thus, not presented in the tables.

\newcommand\stent{\rule[-1ex]{0pt}{3.2ex}}
\begin{table}[h!]
	\centering
		\begin{tabular}{|c|c|c|c|c|c|c|c|}\hline
			 \small $N_{\text{el}}$ &  $L^2(\rho)$ & $L^2(v_1)$ & $L^2(v_3)$ & $L^2(p)$ & $L^2(B_1)$ & $L^2(B_3)$ & $L^2(\psi)$ \stent\\
			\hline 
			$4^3$ & 1.62E-01  & 1.74E-01 &  1.42E-01 &  3.42E-01 &  1.19E-01 &  1.65E-02 &  2.03E-02\stent\\
			\hline
			$8^3$ & 6.11E-03 &  8.38E-03  &  6.13E-03 &  1.59E-02 &  3.51E-03 &  2.18E-03 & 1.19E-03\stent\\
			\hline
			$16^3$ & 2.40E-04 &  5.02E-04 & 3.40E-04 &  1.18E-03 &  1.39E-04 &  1.09E-04 &  6.06E-05\stent\\
			\hline
			$32^3$ & 1.93E-05 &  2.51E-05 & 1.54E-05 &  7.42E-05 &  7.56E-06 &  2.80E-06 &  3.40E-06\stent\\
			\hline
			\hline 	
			avg EOC & 4.34	& 4.25	&	4.39	& 4.06 &	4.65 &	4.18	& 4.18 \stent\\
			\hline 
		\end{tabular}
		\caption{$L^2$-errors and EOC of manufactured solution test for resistive GLM-MHD and $N\!=\!3$.}\label{EOCres3}
\end{table}
\begin{table}[h!]
	\centering
		\begin{tabular}{|c|c|c|c|c|c|c|c|}\hline
			 \small $N_{\text{el}}$ & $L^2(\rho)$ & $L^2(v_1)$ & $L^2(v_3)$ & $L^2(p)$ & $L^2(B_1)$ & $L^2(B_3)$ & $L^2(\psi)$ \stent\\
			\hline 
			$4^3$ & 1.11E-01 &  1.66E-01 &  1.06E-01 &  2.76E-01 &  7.50E-02 & 5.95E-02 &  1.87E-02\stent\\
			\hline
			$8^3$ & 2.49E-03  & 5.01E-03  &  3.96E-03 &  1.17E-02 &  2.39E-03 &  2.02E-03 &  8.44E-04\stent\\
			\hline
			$16^3$ & 7.88E-05 &  1.45E-04 & 1.04E-04 &  3.92E-04 &  5.84E-05 & 4.52E-05 &  3.03E-05\stent\\
			\hline
			$32^3$ & 2.75E-06  & 3.51E-06 &  2.44E-06 &  1.17E-05 &  1.46E-06 & 7.18E-07 &  1.50E-06\stent\\
			\hline
			\hline 	
			avg EOC & 5.10	& 5.18	& 5.13	& 4.84	& 5.21 &	5.45	& 4.54 \stent\\
			\hline 
		\end{tabular}
		\caption{$L^2$-errors and EOC of manufactured solution test for resistive GLM-MHD and $N\!=\!4$.}\label{EOCres4}
\end{table}

We note, if we apply an entropy conservative approximation to this test case, the convergence order would exhibit an odd/even effect depending on the polynomial degree. This phenomenon of optimal convergence order for even and sub-optimal convergence order for odd polynomial degrees has been previously observed for entropy conservative DG approximations, e.g. \cite{gassner_skew_burgers,Gassner:2016ye}.

%%%%%%%%%%%%%%%%%%%%%%%%%%%%%%%%%%%%%%%%%%%%%%%%%%%%%%%%%%%%%%%%%%%%%%%%
\subsection{Entropy Conservation on 3D Curvilinear Meshes}\label{Sec:EC}
%%%%%%%%%%%%%%%%%%%%%%%%%%%%%%%%%%%%%%%%%%%%%%%%%%%%%%%%%%%%%%%%%%%%%%%%
Next, we demonstrate the discrete entropy conservation of the newly proposed method, both with and without the GLM terms. As such, we deactivate the numerical dissipation introduced by the interface stabilization terms \eqref{ES_flux} and set $\viscosity=\resistivity\!=\!0$ to remove the resistive terms, because entropy conservation only applies to the \textit{ideal} equations.

We choose mesh type (b) from Figure~\ref{fig:meshes} with a resolution of $7^3\!=\!343$ elements. As well-resolved smooth solutions would result in very small changes of total entropy anyway, we purposely select a more challenging test case initializing a moving spherical blast wave in a constant magnetic field. The inner and outer states are given in Table~\ref{tab:ECtest_states} and are blended over a distance of approximately $\delta_0$ with the blending function 
\begin{equation}
 \label{eq:ECtest_exfunc}
 \statevec{u}=\frac{\statevec{u}_\text{inner}+\lambda \statevec{u}_\text{outer}}{1+\lambda}\,, \quad \lambda=\exp\left[\frac{5}{\delta_0}(r-r_0)\right]\,,\quad r = \|\spacevec{x}-\spacevec{x}_c\|\,.
\end{equation} 
The parameters are $\spacevec{x}_c=(0.3,0.4,0.2),r_0=0.3,\delta_0=0.1$ and we set $\gamma\!=\! 5/3$.  A visualization of the initial state is shown in Figure~\ref{fig:ECtest_visu}.
\begin{table}[htbp!]
	\centering
		\begin{tabular}{|c|c|c|c|c|c|c|c|c|c|}\hline
			     & $\rho$ 	& $v_1$	& $v_2$	& $v_3$	& $p$ 	& $B_1$	& $B_2$	& $B_3$	& $\psi$ \stent\\\hline
			inner & $1.2$	& $0.1$	& $0.0$	& $0.1$	&$0.9$	& $1.0$	& $1.0$	& $1.0$	& $0.0$ \stent\\\hline
			outer & $1.0$	& $0.2$	& $-0.4$& $0.2$	&$0.3$	& $1.0$	& $1.0$	& $1.0$	& $0.0$ \stent\\\hline 
		\end{tabular}
		\caption{Inner and outer primitive states for the entropy conservation test.}
\label{tab:ECtest_states}
\end{table}

We know, that the entropy conservative scheme is essentially dissipation-free by construction. Therefore, even in the case of discontinuous solutions, the only dissipation introduced into the approximation is through the time integration scheme. Hence, we can use the error in the total entropy as a measure of the temporal convergence of the method. In particular, the total entropy at time $t$ is computed in the discrete approximation by \eqref{totalEntr}. We know that by reducing the CFL number, the dissipation of the time integration scheme decreases and entropy conservation is captured more accurately, e.g. \cite{Fjordholm2011}. Figure \ref{fig:ECtest_dt_convergence} confirms this precise behavior, for $N\!=\!4$ and $N\!=\!5$ using a fourth order Runge-Kutta time integrator. For small enough time steps, the EC scheme conserves entropy discretely up to numerical roundoff even for this discontinuous test case. The same behavior is observed for the EC scheme without the GLM terms. We also demonstrate, that the entropy change for the entropy stable scheme including additional dissipation through the numerical surface fluxes \change{is independent of the time step.}
\begin{figure}[htbp!]
\begin{center}
\includegraphics[width=0.75\textwidth]{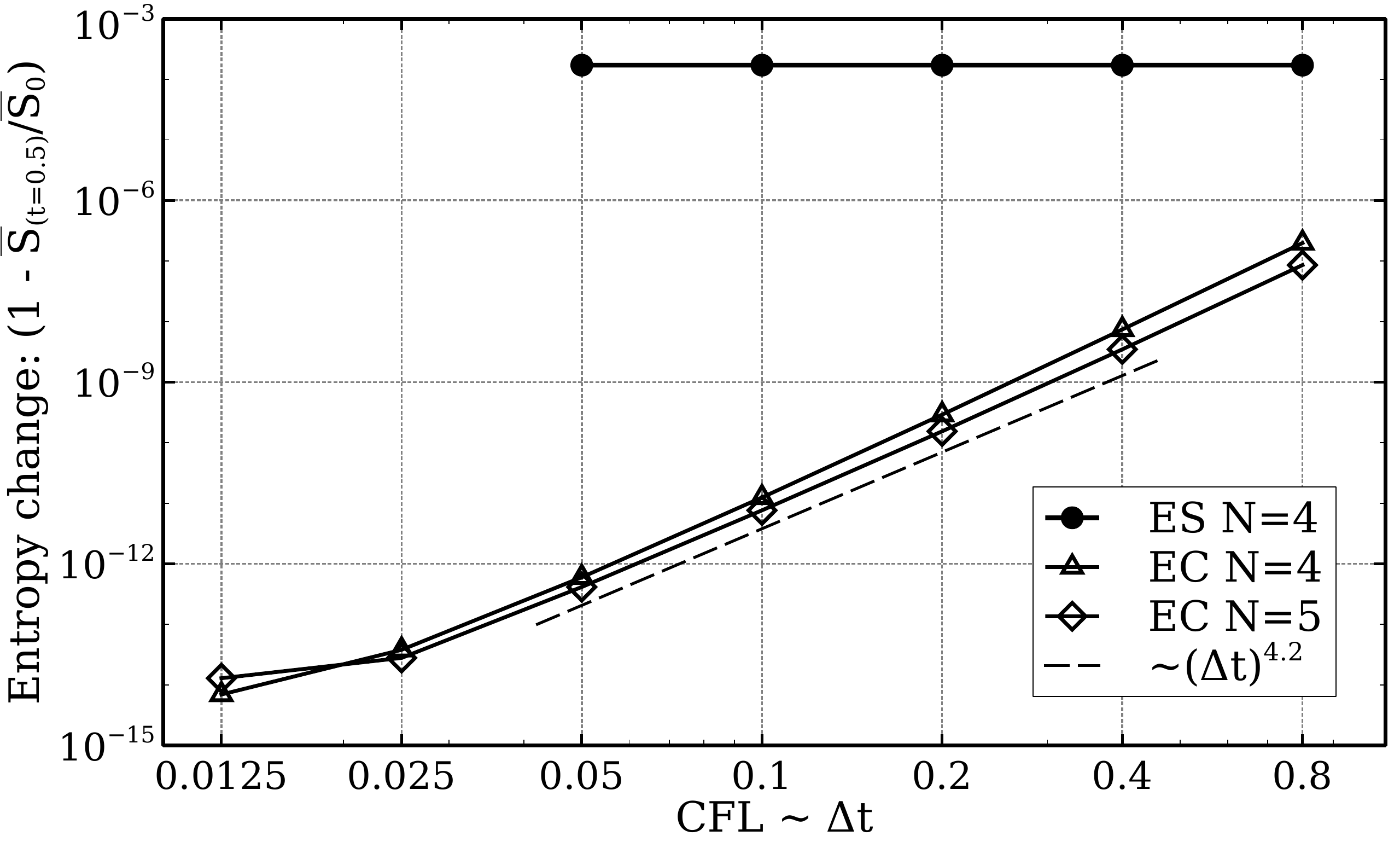}
\caption{Log-log plot of entropy change from the initial entropy $\overline{S}_0$ to $\overline{S}(t=0.5)$,  over the timestep.}
\label{fig:ECtest_dt_convergence}
\end{center}
\end{figure}

As the EC scheme has virtually no built-in dissipation, all EC simulations crash due to negative pressure, independently of the CFL number, after $T\!\approx \!0.72,0.61$ for $N\!=\!4,5$ respectively. In Figure~\ref{fig:ECtest_visu}, the $N\!=\!4$ simulation results at $T\!=\!0.5$ of the EC and ES scheme are visualized and clearly underline the difference between the EC and the ES results, although dissipation is only added at the element interfaces. 
\begin{figure}[htbp!]
\begin{center}
   \includegraphics[width=0.98\textwidth]{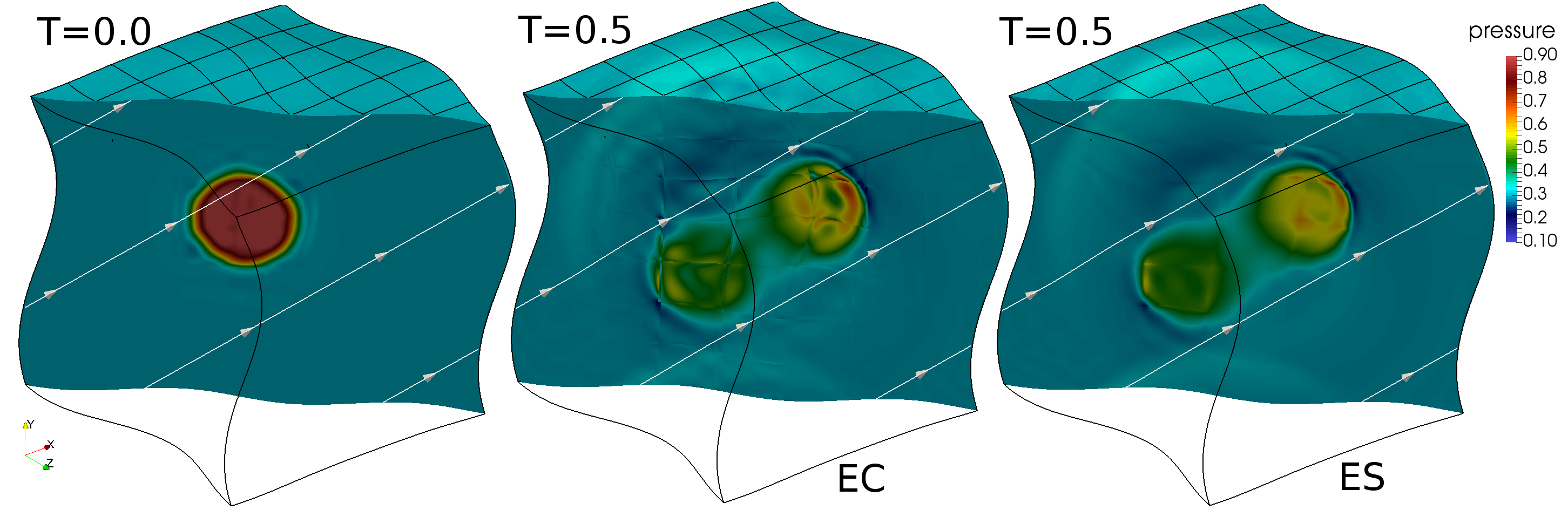}
\caption{Entropy conserving test of ideal GLM-MHD equations for $N\!=\!4$: Pressure distribution, left at initialization, the EC scheme in the middle  at $T=0.5$  and the ES scheme on the right. }
\label{fig:ECtest_visu}
\end{center}
\end{figure}

In all simulations, we use the discretely divergence-free metric terms computed via the discrete curl form \cite{Kopriva:2006er}, a necessary condition pointed out in the entropy conservation proofs. We note that, when using the $N\!=\!4$ EC scheme with cross-product metrics instead, the discrete entropy conservation property is broken with an absolute entropy change $>10^{-8}$ and even with the wrong sign for CFL numbers $\leq 0.4$.

%%%%%%%%%%%%%%%%%%%%%%%%%%%%%%%%%%%%%%%%%%%%%%%%%%%%%%%%%%%%%%%%%%%%%%%%
\change{\subsection{GLM Divergence Cleaning}}\label{Sec:Gauss}
%%%%%%%%%%%%%%%%%%%%%%%%%%%%%%%%%%%%%%%%%%%%%%%%%%%%%%%%%%%%%%%%%%%%%%%%

\change{In order to demonstrate the reduction of the divergence error in the magnetic field, we use a maliciously chosen non-divergence-free initialization in $\Omega = \left[0,1\right]^3$ defined by a Gaussian pulse in the $x-$component of the magnetic field proposed in \cite{altmann2012}:
\begin{equation*}
 \rho(x,y,0) = 1, ~~~ E(x,y,0) = 6, ~~~ B_1(x,y,0) = \exp\left(-\frac{1}{8} ~ \frac{(x-0.5)^2+(y-0.5)^2+(z-0.5)^2}{0.0275^2}\right).
\end{equation*}
The other initial values are set to zero and the boundaries are periodic. Again we set $\gamma = 5/3$ and turn off physical viscosity. In Figure \ref{DivError} we illustrate the time evolution of the normalized discrete divergence error measured in terms of $\|\vec{\nabla} \cdot \spacevec{B}\|_{L^2(\Omega)}$ for $N=3$ and mesh type (a) with $20^3$ elements. Here, we show the simulation results of the ideal GLM-MHD approximation, in which the divergence error is solely propagated through the domain. However, due to the periodic nature of the boundaries, it is known that the divergence errors will simply advect back into the domain \cite{Dedner2002} with only minimal damping due to the high-order nature of the DG scheme. As such, we provide a study comparing the no divergence cleaning case ($c_h=0$) against the GLM with additional damping and varying the value of $\alpha$ in \eqref{damping}. We see in Figure \ref{DivError} that without GLM divergence cleaning the simulation even crashes at $t\approx 3.4$ and for $\alpha>0$ the divergence error decays over time.\\% For this periodic test problem we see that extra damping is necessary to control errors in Figure \ref{DivError}.
}

\begin{figure}[h!]
\begin{center}
\includegraphics[width=9cm]{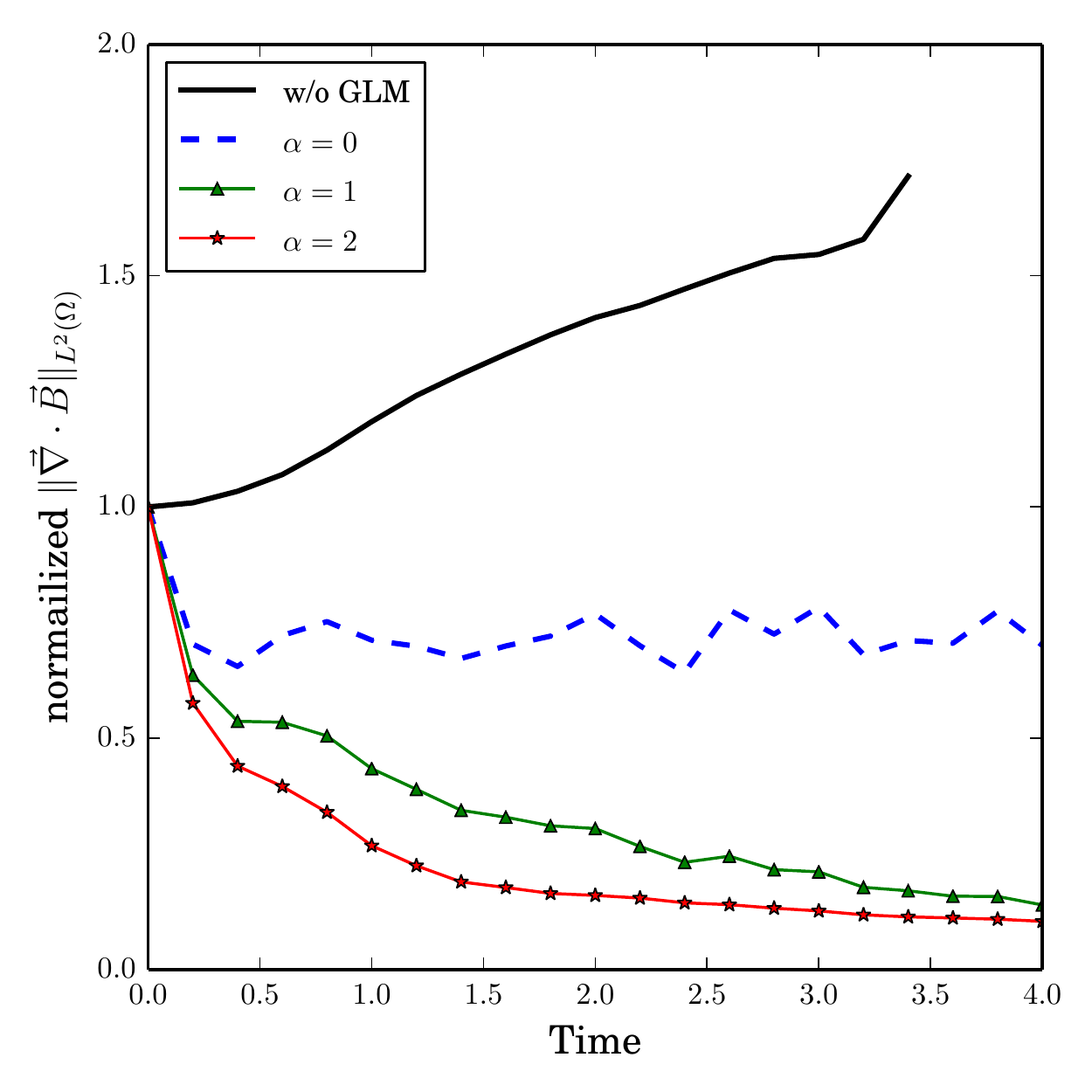}
\caption{Temporal evolution of the normalized discrete $L^2$ error in the divergence-free condition for $N=3$ in each spatial direction on $20^3$ curved elements.}
\label{DivError}
\end{center}
\end{figure}

%%%%%%%%%%%%%%%%%%%%%%%%%%%%%%%%%%%%%%%%%%%%%%%%%%%%%%%%%%%%%%%%%%%%%%%%
\subsection{Robustness test}\label{Sec:Robustness}
%%%%%%%%%%%%%%%%%%%%%%%%%%%%%%%%%%%%%%%%%%%%%%%%%%%%%%%%%%%%%%%%%%%%%%%%

Lastly, we use the Orszag-Tang vortex to demonstrate the increased robustness of the entropy stable approximation including GLM divergence cleaning. To do so, we use a three-dimensional extension of this test case proposed by Elizarova and Popov \cite{Elizarova2015} as well as consider a viscous version of the standard test problem, similar to Altmann \cite{altmann2012}. This test is particularly challenging because the initial conditions are simple and smooth, but evolve to contain complex structures and energy exchanges between the velocity and magnetic fields. 
 %\begin{equation}
% \begin{pmatrix}
% \rho\\[0.1cm]
% v_1\\[0.1cm]
% v_2\\[0.1cm]
% v_3\\[0.1cm]
% p\\[0.1cm]
% B_1\\[0.1cm]
% B_2\\[0.1cm]
% B_3\\[0.1cm]
% \psi\\[0.1cm]
% \end{pmatrix}
% =
% \begin{pmatrix}
% \frac{25}{36\pi}\\[0.1cm]
% -\sin(2\pi z)\\[0.1cm]
% \sin(2\pi x)\\[0.1cm]
% \sin(2\pi y)\\[0.1cm]
% \frac{5}{12\pi}\\[0.1cm]
% -\frac{1}{4\pi}\sin(2\pi z)\\[0.1cm]
% \frac{1}{4\pi}\sin(4\pi x)\\[0.1cm]
% \frac{1}{4\pi}\sin(4\pi y)\\[0.1cm]
% 0\\[0.1cm]
% \end{pmatrix},
%\end{equation}

\begin{table}[htbp!]
	\change{\centering
		\begin{tabular}{|c|c|c|c|c|c|c|c|c|}\hline
		 $\rho$ 	& $v_1$	& $v_2$	& $v_3$	& $p$ 	& $B_1$	& $B_2$	& $B_3$	& $\psi$ \stent\\\hline
	$\frac{25}{36\pi}$	& $-\sin(2\pi z)$	& $\sin(2\pi x)$	& $\sin(2\pi y)$	&$\frac{5}{12\pi}$	& $-\frac{1}{4\pi}\sin(2\pi z)$	& $\frac{1}{4\pi}\sin(4\pi x)$	& $\frac{1}{4\pi}\sin(4\pi y)$	& $0.$ \stent\\\hline
		\end{tabular}
		\caption{Initial condition the the 3D Orzag-Tang vortex in in primivtive variables.}
\label{tab:OTVtest_prim}}
\end{table} 

The domain is $\Omega = [0,1]^3$ with periodic boundary conditions \change{ and the initial data is given in Table \ref{tab:OTVtest_prim}. We choose $\gamma = 5/3$ and a Prandtl number of $\mathrm{Pr}\!=\!0.72$, and set the viscosity and resistivity parameters to
\begin{equation}\label{eq:resParameters}
\viscosity = 10^{-3},\quad\resistivity = 6\times 10^{-4}\,
\end{equation}
which correspond} to a kinematic Reynolds number (Re) of $1000$ and a magnetic Reynolds number ($\mathrm{Re_m}$) of approximately $1667$. The initial conditions evolve to a final time of $T\!=\!0.5$. The simulation uses a polynomial order $N\!=\!7$ in each spatial direction on a $10\!\times\!10\!\times\! 10$ element internally curved hexahedral mesh, Fig. \ref{fig:meshes}(a). %We note that, to the best of our knowledge, there is no reference solution available for this test case setup.

We successfully run the entropy stable DGSEM for the resistive GLM-MHD equations up to the final time. The magnetic energy, $\frac{1}{2}\|\spacevec{B}\|^2$, is visualized in Fig. \ref{fig:OTV} for the initialization, time $T\!=\!0.25$ as well as the final time $T\!=\!0.5$.
\begin{figure}[!ht]
   \centering
   \includegraphics[width=1.0\textwidth]{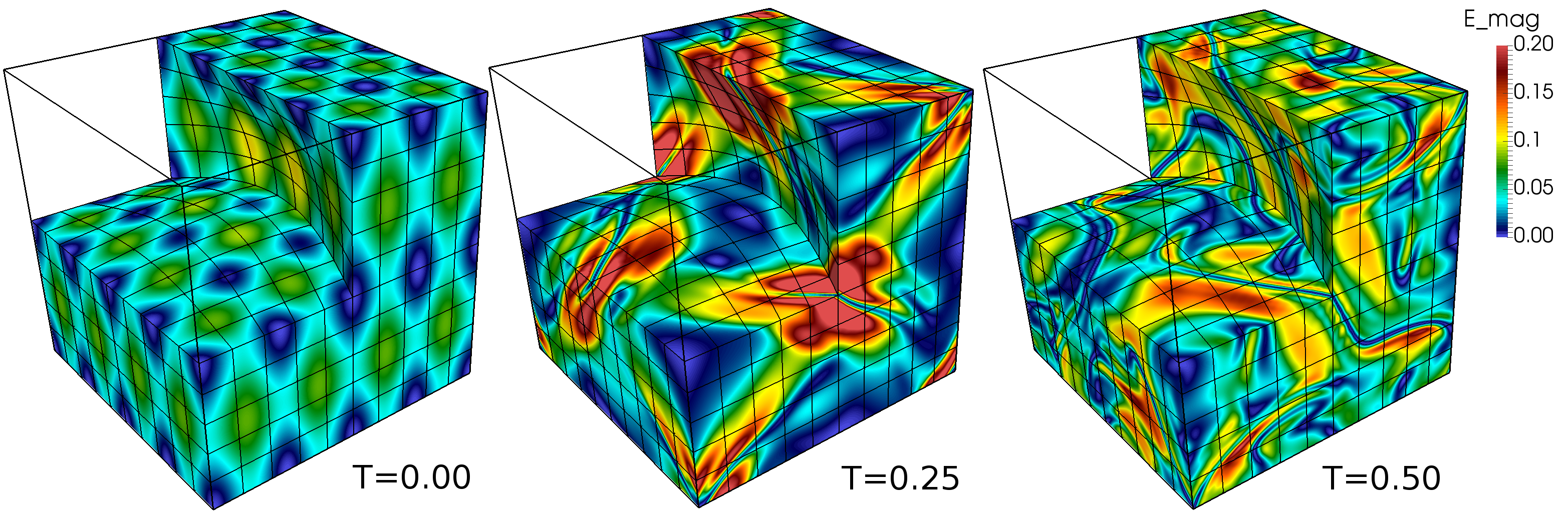}
%     \subfloat[$T=0.25$]
%     {
%         \includegraphics[width=0.5\textwidth]{pics/OTV_N7_persp_Emag_t0250.png}
%     }
%     \subfloat[$T=0.375$]
%     {
%         \includegraphics[width=0.5\textwidth]{pics/OTV_N7_persp_Emag_t0375.png}
%     }
%         \\
%     \subfloat[$T=0.5$]
%     {
%         \includegraphics[width=0.5\textwidth]{pics/OTV_N7_persp_Emag_t0500.png}
%     }
    \caption{Visualization of the time evolution of the magnetic energy for the three-dimensional viscous Orszag-Tang vortex with polynomial order $N\!=\!7$ in each spatial direction on a $10\!\times\!10\!\times\! 10$ internally curved hexahedral mesh, e.g. Fig. \ref{fig:meshes}(a). The upper left part of the mesh is not shown to visualize interior element boundaries.}
    \label{fig:OTV}
\end{figure}
Further, we find that the standard DGSEM, e.g. \cite{Hindenlang201286}, for this resistive GLM-MHD model crashes at $T\!\approx\! 0.42$ due to the generation of negative pressure values. Also, we shrank the time step for the entropy stable as well as the standard DG runs and find the same numerical stability results. This reinforces that the numerical instabilities in the approximate flow are caused by errors other than those introduced by the time integration scheme. These results demonstrate the strong benefits of using an entropy stable formulation for such a complex configuration.  %We also provide a time-dependent plot of the normalized total entropy for the case \texttt{CFL}=\texttt{DFL}=0.5 to show the entropy stability of the entire approximation in Figure \ref{EE}.

%We note that the high-order entropy stable approximation is not guaranteed to be oscillation-free near shocks, e.g. \cite{wintermeyer2017}. Thus, there are still observable, albeit small, numerical artifacts in the approximate solution (Figure \ref{fig:OTV}). But, due to the viscous and resistive terms, the entropy stable DGSEM applied to this setup of the three-dimensional Orszag-Tang vortex can run without any additional shock capturing method.

\section{Conclusions}\label{Sec:Concl}

In this work, we have presented a novel entropy stable nodal DG scheme for the resistive MHD equations including GLM divergence cleaning on curvilinear unstructured hexahedral meshes. First, we have analyzed the continuous entropic properties of the underlying system in order to demonstrate that the resistive GLM-MHD equations satisfy the entropy inequality. This also provided guidance for the semi-discrete analysis. We carefully have split the different terms in the DGSEM and investigated the individual discrete entropy contributions step by step. 

We have validated our theoretical analysis with several numerical results. In doing so, we have introduced the open source framework FLUXO (\url{www.github.com/project-fluxo}) that implements a high-order, entropy stable, nodal DGSEM for the resistive GLM-MHD equations. In particular, we have shown with the method of manufactured solutions, that the entropy stable DGSEM solver described in this work is high-order accurate on curvilinear meshes. Next, we have verified the entropy conservative nature \change{as well as the GLM divergence cleaning} of the underlying scheme for the ideal part of the equations, also on three-dimensional curvilinear meshes. Finally, the last numerical test of a three-dimensional viscous Orszag-Tang vortex reveals that the entropy stable discretization with hyperbolic divergence cleaning significantly improves the robustness compared to the standard DGSEM.

%%%%%%%%%%%%%%%%%%%%%%%%%%%%%%%%%%%%%%%%%%%%%%%%%%%%%%%%%%%%%%%%%%%%%%%%
\section*{Acknowledgements}
%%%%%%%%%%%%%%%%%%%%%%%%%%%%%%%%%%%%%%%%%%%%%%%%%%%%%%%%%%%%%%%%%%%%%%%%
Gregor Gassner thanks the European Research Council for funding through the ERC Starting Grant ``An Exascale aware and Un-crashable Space-Time-Adaptive Discontinuous Spectral Element Solver for Non-Linear Conservation Laws'' (\texttt{Extreme}), ERC grant agreement no. 714487.
Dominik Derigs acknowledges the support of the Bonn-Cologne Graduate School for Physics and Astronomy (BCGS), which is funded through the Excellence Initiative, as well as the Sonderforschungsbereich (SFB) 956 on the ``Conditions and impact of star formation''.
Florian Hindenlang thanks Eric Sonnendr\"ucker and the Max-Planck Institute for Plasma Physics in Garching for their constant support.
This work was partially performed on the Cologne High Efficiency Operating Platform for Sciences (CHEOPS) at the Regionales Rechenzentrum K\"oln (RRZK).
%Stefanie Walch thanks the Deutsche Forschungsgemeinschaft (DFG) for funding through the SPP 1573 ``The physics of the interstellar medium'' and the funding from the European Research Council via the ERC Starting Grant ``The radiative interstellar medium'' (\texttt{RADFEEDBACK}).

%%%%%%%%%%%%%%%%%%%%%%%%%%%%%%%%%%%%%%%%%%%%%%%%%%%%%%%%%%%%%%%%%%%%%%%%
\section*{References}
%%%%%%%%%%%%%%%%%%%%%%%%%%%%%%%%%%%%%%%%%%%%%%%%%%%%%%%%%%%%%%%%%%%%%%%%

\bibliography{Paper_ESDGSEM_GLMMHD_Theory}

%%%%%%%%%%%%%%%%%%%%%%%%%%%%%%%%%%%%%%%%%%%%%%%%%%%%%%%%%%%%%%%%%%%%%%%%
\appendix
%%%%%%%%%%%%%%%%%%%%%%%%%%%%%%%%%%%%%%%%%%%%%%%%%%%%%%%%%%%%%%%%%%%%%%%%

%%%%%%%%%%%%%%%%%%%%%%%%%%%%%%%%%%%%%%%%%%%%%%%%%%%%%%%%%%%%%%%%%%%%%%%%
\section{Dissipation matrices for entropy variables}\label{Sec:DisMatrix}
%%%%%%%%%%%%%%%%%%%%%%%%%%%%%%%%%%%%%%%%%%%%%%%%%%%%%%%%%%%%%%%%%%%%%%%%

In this section we explicitly state the missing block matrices necessary to define the diffusion terms for the entropy stable approximation of the resistive GLM-MHD equations from Lemma \ref{lemma1}:
\begin{equation}\label{eq:matK12}
\nineMatrix{K}_{12} = \frac{1}{w_5}\begin{pmatrix}
0 & 0 & 0 & 0 & 0 & 0 & 0 & 0 & 0 \\[0.15cm]
0 & 0 & \frac{2\viscosity}{3} & 0 & -\frac{2\viscosity w_3}{3w_5} & 0 & 0 & 0 & 0 \\[0.15cm]
0 & -\viscosity & 0 & 0 & \frac{\viscosity w_2}{w_5} & 0 & 0 & 0 & 0 \\[0.15cm]
0 & 0 & 0 & 0 & 0 & 0 & 0 & 0 & 0 \\[0.15cm]
0 & \frac{\viscosity w_3}{w_5} & -\frac{2\viscosity w_2}{3w_5} & 0 & -\frac{\viscosity w_2 w_3}{3w_5^2} & 0 & 0 & 0 & 0 \\[0.15cm]
0 & 0 & 0 & 0 & \frac{\resistivity w_6 w_7}{w_5^2} & -\frac{\resistivity w_7}{w_5} & 0 & 0 & 0 \\[0.15cm]
0 & 0 & 0 & 0 & 0 & 0 & 0 & 0 & 0 \\[0.15cm]
0 & 0 & 0 & 0 & -\frac{\resistivity w_6}{w_5} & \resistivity & 0 & 0& 0 \\[0.15cm]
0 & 0 & 0 & 0 & 0 & 0 & 0 & 0 & 0 \\[0.15cm]
\end{pmatrix}
\end{equation}

\begin{equation}
\nineMatrix{K}_{13} = \frac{1}{w_5}\begin{pmatrix}
0 & 0 & 0 & 0 & 0 & 0 & 0 & 0 & 0 \\[0.15cm]
0 & 0 & 0 & \frac{2\viscosity}{3} & -\frac{2\viscosity w_4}{3w_5} & 0 & 0 & 0 & 0 \\[0.15cm]
0 & 0 & 0 & 0 & 0 & 0 & 0 & 0 & 0 \\[0.15cm]
0 & -\viscosity & 0 & 0 & \frac{\viscosity w_2}{w_5} & 0 & 0 & 0 & 0 \\[0.15cm]
0 & \frac{\viscosity w_4}{w_5} & 0 & -\frac{2\viscosity w_2}{3 w_5} & -\frac{\viscosity w_2 w_4}{3w_5^2}+\frac{\resistivity w_6 w_8}{w_5^2} & -\frac{\resistivity w_8}{w_5} & 0 & 0 & 0 \\[0.15cm]
0 & 0 & 0 & 0 & 0 & 0 & 0 & 0 & 0 \\[0.15cm]
0 & 0 & 0 & 0 & 0 & 0 & 0 & 0 & 0 \\[0.15cm]
0 & 0 & 0 & 0 & -\frac{\resistivity w_6}{w_5} & \resistivity & 0 & 0 & 0 \\[0.15cm]
0 & 0 & 0 & 0 & 0 & 0 & 0 & 0 & 0 \\[0.15cm]
\end{pmatrix}
\end{equation}

\begin{equation}
\nineMatrix{K}_{21} = \frac{1}{w_5}\begin{pmatrix}
0 & 0 & 0 & 0 & 0 & 0 & 0 & 0 & 0 \\[0.15cm]
0 & 0 & -\viscosity & 0 & \frac{\viscosity w_3}{w_5} & 0 & 0 & 0 & 0 \\[0.15cm]
0 & \frac{2\viscosity}{3} & 0 & 0 & -\frac{2\viscosity w_2}{3 w_5} & 0 & 0 & 0 & 0 \\[0.15cm]
0 & 0 & 0 & 0 & 0 & 0 & 0 & 0 & 0 \\[0.15cm]
0 & -\frac{2\viscosity w_3}{3w_5} & \frac{\viscosity w_2}{w_5} & 0 & -\frac{\viscosity w_2 w_3}{3w_5^2} & 0 & 0 & 0 & 0 \\[0.15cm]
0 & 0 & 0 & 0 & \frac{\resistivity w_6 w_7}{w_5^2} & 0 & -\frac{\resistivity w_6}{w_5} & 0 & 0 \\[0.15cm]
0 & 0 & 0 & 0 & -\frac{\resistivity w_7}{w_5} & 0 & \resistivity & 0 & 0 \\[0.15cm]
0 & 0 & 0 & 0 & 0 & 0 & 0 & 0 & 0 \\[0.15cm]
0 & 0 & 0 & 0 & 0 & 0 & 0 & 0 & 0 \\[0.15cm]
\end{pmatrix}
\end{equation}

\begin{equation}
\resizebox{0.925\hsize}{!}{$
\nineMatrix{K}_{22} = \frac{1}{w_5}\begin{pmatrix}
0 & 0 & 0 & 0 & 0 & 0 & 0 & 0 & 0 \\[0.15cm]
0 & -\viscosity & 0 & 0 & \frac{\viscosity w_2}{w_5} & 0 & 0 & 0 & 0 \\[0.15cm]
0 & 0 & -\frac{4\viscosity}{3} & 0 & \frac{4\viscosity w_3}{3 w_5} & 0 & 0 & 0 & 0 \\[0.15cm]
0 & 0 & 0 & -\viscosity & \frac{\viscosity w_4}{w_5} & 0 & 0 & 0 & 0 \\[0.15cm]
0 & \frac{\viscosity w_2}{w_5} & \frac{4\viscosity w_3}{3 w_5} & \frac{\viscosity w_4}{w_5} & -\frac{\viscosity w_2^2}{w_5^2}-\frac{4\viscosity w_3^2}{3 w_5^2}-\frac{\viscosity w_4^2}{w_5^2}+\frac{\kappa}{R w_5}-\frac{\resistivity w_6^2}{w_5^2}-\frac{\resistivity w_8^2}{w_5^2} & \frac{\resistivity w_6}{w_5} & 0 & \frac{\resistivity w_8}{w_5} & 0 \\[0.15cm]
0 & 0 & 0 & 0 & \frac{\resistivity w_6}{w_5} & -\resistivity & 0 & 0 & 0 \\[0.15cm]
0 & 0 & 0 & 0 & 0 & 0 & 0 & 0 & 0 \\[0.15cm]
0 & 0 & 0 & 0 & \frac{\resistivity w_8}{w_5} & 0 & 0 & -\resistivity & 0 \\[0.15cm]
0 & 0 & 0 & 0 & 0 & 0 & 0 & 0 & 0 \\[0.15cm]
\end{pmatrix}$}
\end{equation}

\begin{equation}
\nineMatrix{K}_{23} = \frac{1}{w_5}\begin{pmatrix}
0 & 0 & 0 & 0 & 0 & 0 & 0 & 0 & 0 \\[0.15cm]
0 & 0 & 0 & 0 & 0 & 0 & 0 & 0 & 0 \\[0.15cm]
0 & 0 & 0 & \frac{2\viscosity}{3} & -\frac{2\viscosity w_4}{3 w_5} & 0 & 0 & 0 & 0 \\[0.15cm]
0 & 0 & -\viscosity & 0 & \frac{\viscosity w_3}{w_5} & 0 & 0 & 0 & 0 \\[0.15cm]
0 & 0 & \frac{\viscosity w_4}{w_5} & -\frac{2\viscosity w_3}{3 w_5} & -\frac{\viscosity w_3 w_4}{3 w_5^2}+\frac{\resistivity w_7 w_8}{w_5^2} & 0 & -\frac{\resistivity w_8}{w_5} & 0 & 0 \\[0.15cm]
0 & 0 & 0 & 0 & 0 & 0 & 0 & 0 & 0 \\[0.15cm]
0 & 0 & 0 & 0 & 0 & 0 & 0 & 0 & 0 \\[0.15cm]
0 & 0 & 0 & 0 & -\frac{\resistivity w_7}{w_5} & 0 & \resistivity & 0 & 0 \\[0.15cm]
0 & 0 & 0 & 0 & 0 & 0 & 0 & 0 & 0 \\[0.15cm]
\end{pmatrix}
\end{equation}

\begin{equation}
\nineMatrix{K}_{31} = \frac{1}{w_5}\begin{pmatrix}
0 & 0 & 0 & 0 & 0 & 0 & 0 & 0 & 0 \\[0.15cm]
0 & 0 & 0 & -\viscosity & \frac{\viscosity w_4}{w_5} & 0 & 0 & 0 & 0 \\[0.15cm]
0 & 0 & 0 & 0 & 0 & 0 & 0 & 0 & 0 \\[0.15cm]
0 & \frac{2\viscosity}{3} & 0 & 0 & -\frac{2\viscosity w_2}{3 w_5} & 0 & 0 & 0 & 0 \\[0.15cm]
0 & -\frac{2\viscosity w_4}{3 w_5} & 0 & \frac{\viscosity w_2}{w_5} & -\frac{\viscosity w_2 w_4}{3w_5^2}+\frac{\resistivity w_6 w_8}{w_5^2} & 0 & 0 & -\frac{\resistivity w_6}{w_5} & 0 \\[0.15cm]
0 & 0 & 0 & 0 & -\frac{\resistivity w_8}{w_5} & 0 & 0 & \resistivity & 0 \\[0.15cm]
0 & 0 & 0 & 0 & 0 & 0 & 0 & 0 & 0 \\[0.15cm]
0 & 0 & 0 & 0 & 0 & 0 & 0 & 0 & 0 \\[0.15cm]
0 & 0 & 0 & 0 & 0 & 0 & 0 & 0 & 0 \\[0.15cm]
\end{pmatrix}
\end{equation}

\begin{equation}
\nineMatrix{K}_{32} = \frac{1}{w_5}\begin{pmatrix}
0 & 0 & 0 & 0 & 0 & 0 & 0 & 0 & 0 \\[0.15cm]
0 & 0 & 0 & 0 & 0 & 0 & 0 & 0 & 0 \\[0.15cm]
0 & 0 & 0 & -\viscosity & \frac{\viscosity w_4}{w_5} & 0 & 0 & 0 & 0 \\[0.15cm]
0 & 0 & \frac{2\viscosity}{3} & 0 & -\frac{2\viscosity w_3}{3 w_5} & 0 & 0 & 0 & 0 \\[0.15cm]
0 & 0 & -\frac{2\viscosity w_4}{3 w_5} & \frac{\viscosity w_3}{w_5} & -\frac{\viscosity w_3 w_4}{3 w_5^5}+\frac{\resistivity w_7 w_8}{w_5^2} & 0 & 0 & -\frac{\resistivity w_7}{w_5} & 0 \\[0.15cm]
0 & 0 & 0 & 0 & 0 & 0 & 0 & 0 & 0 \\[0.15cm]
0 & 0 & 0 & 0 & -\frac{\resistivity w_8}{w_5} & 0 & 0 & \resistivity & 0 \\[0.15cm]
0 & 0 & 0 & 0 & 0 & 0 & 0 & 0 & 0 \\[0.15cm]
0 & 0 & 0 & 0 & 0 & 0 & 0 & 0 & 0 \\[0.15cm]
\end{pmatrix}
\end{equation}

\begin{equation}\label{eq:matK33}
\resizebox{0.925\hsize}{!}{$
\nineMatrix{K}_{33} = \frac{1}{w_5} \begin{pmatrix}
0 & 0 & 0 & 0 & 0 & 0 & 0 & 0 & 0 \\[0.15cm]
0 & -\viscosity & 0 & 0 & \frac{\viscosity w_2}{w_5} & 0 & 0 & 0 & 0 \\[0.15cm]
0 & 0 & -\viscosity & 0 & \frac{\viscosity w_3}{w_5} & 0 & 0 & 0 & 0 \\[0.15cm]
0 & 0 & 0 & -\frac{4\viscosity}{3} & \frac{4\viscosity w_4}{3 w_5} & 0 & 0 & 0 & 0 \\[0.15cm]
0 & \frac{\viscosity w_2}{w_5} & \frac{\viscosity w_3}{w_5} & \frac{4\viscosity w_4}{3 w_5} & -\frac{\viscosity w_2^2}{w_5^2}-\frac{\viscosity w_3^2}{w_5^2}-\frac{4\viscosity w_4^2}{3w_5^2}+\frac{\kappa}{Rw_5}-\frac{\resistivity w_6^2}{w_5^2}-\frac{\resistivity w_6^2}{w_5^2} & \frac{\resistivity w_6}{w_5} & \frac{\resistivity w_7}{w_5} & 0 & 0 \\[0.15cm]
0 & 0 & 0 & 0 & \frac{\resistivity w_6}{w_5} & -\resistivity & 0 & 0 & 0 \\[0.15cm]
0 & 0 & 0 & 0 & \frac{\resistivity w_7}{w_5} & 0 & -\resistivity & 0 & 0 \\[0.15cm]
0 & 0 & 0 & 0 & 0 & 0 & 0 & 0 & 0 \\[0.15cm]
0 & 0 & 0 & 0 & 0 & 0 & 0 & 0 & 0 \\[0.15cm]
\end{pmatrix}$}
\end{equation}

%%%%%%%%%%%%%%%%%%%%%%%%%%%%%%%%%%%%%%%%%%%%%%%%%%%%%%%%%%%%%%%%%%%%%%%%
\section{Proof of Lemma \ref{lem:MHD_EC_vol} (Entropy contribution of the curvilinear ideal MHD volume terms)}\label{Sec:MHD_vol}
%%%%%%%%%%%%%%%%%%%%%%%%%%%%%%%%%%%%%%%%%%%%%%%%%%%%%%%%%%%%%%%%%%%%%%%%
In this section we show that the volume contributions of the ideal MHD equations and the non-conservative terms cancel in entropy space. In order to do so, we first expand each of the volume contribution from the advective terms
\begin{equation}\label{eq:F_EC_vol}
\begin{aligned}
\iprodN{\DD\bigcontravec F^{\ec,\supMHD},\statevec{W}} = 
 \sum\limits_{i,j,k=0}^{N}\omega_{ijk}\statevec{W}^T_{ijk}&\left[2\sum_{m=0}^N \dmat_{im}\!\left(\bigstatevec{F}^{\ec,\supMHD}(\statevec{U}_{ijk}, \statevec{U}_{mjk})\cdot\avg{J\spacevec{a}^{\,1}}_{(i,m)jk}\right)\right.\\[0.1cm]
&+     2\sum_{m=0}^N \dmat_{jm}\!\left(\bigstatevec{F}^{\ec,\supMHD}(\statevec{U}_{ijk}, \statevec{U}_{imk})\cdot\avg{J\spacevec{a}^{\,2}}_{i(j,m)k}\right)\\[0.1cm]
&+     \left.2\sum_{m=0}^N \dmat_{km}\!\left(\bigstatevec{F}^{\ec,\supMHD}(\statevec{U}_{ijk}, \statevec{U}_{ijm})\cdot\avg{J\spacevec{a}^{\,3}}_{ij(k,m)}\right)\right]
\end{aligned}
\end{equation}
and the non-conservative term
\begin{equation}\label{eq:nonConsProof}
\begin{aligned}
\iprodN{\Jan\DDncdiv\contraspacevec{B},\statevec{W}}=
\sum\limits_{i,j,k=0}^{N}\omega_{ijk}\statevec{W}^T_{ijk}&\left[ \sum_{m=0}^N \dmat_{im}\!\Jan_{ijk}\!\left(\spacevec{B}_{mjk}\cdot\avg{J\spacevec{a}^{\,1}}_{(i,m)jk}\right)\right.\\[0.1cm]
&+     \sum_{m=0}^N \dmat_{jm}\!\Jan_{ijk}\!\left(\spacevec{B}_{imk}\cdot\avg{J\spacevec{a}^{\,2}}_{i(j,m)k}\right)\\[0.1cm]
&+     \left.\sum_{m=0}^N \dmat_{km}\!\Jan_{ijk}\!\left(\spacevec{B}_{ijm}\cdot\avg{J\spacevec{a}^{\,3}}_{ij(k,m)}\right)\right].
\end{aligned}
\end{equation}
We then focus on the $\xi-$direction term of the volume integral approximations, which greatly simplifies the analysis (as the other directions are done in an analogous manner). The sum of \eqref{eq:F_EC_vol} can be written in terms of the SBP matrix \eqref{eq:SBPProperty}, $\qmat_{im}=\omega_{i}\dmat_{im}$,
\begin{equation}
\begin{aligned}
\sum\limits_{i,j,k=0}^{N}\omega_{ijk}\statevec{W}^T_{ijk}&2\sum_{m=0}^N \dmat_{im}\!\left(\bigstatevec{F}^{\ec,\supMHD}(\statevec{U}_{ijk}, \statevec{U}_{mjk})\cdot\avg{J\spacevec{a}^{\,1}}_{(i,m)jk}\right)\\[0.05cm]
=&\sum\limits_{j,k=0}^{N}\omega_{jk}\sum\limits_{i=0}^N\statevec{W}^T_{ijk}\sum_{m=0}^N 2\omega_i\dmat_{im}\!\left(\bigstatevec{F}^{\ec,\supMHD}(\statevec{U}_{ijk}, \statevec{U}_{mjk})\cdot\avg{J\spacevec{a}^{\,1}}_{(i,m)jk}\right)\\[0.05cm]
=&\sum\limits_{j,k=0}^{N}\omega_{jk}\sum\limits_{i=0}^N\statevec{W}^T_{ijk}\sum_{m=0}^N 2\qmat_{im}\!\left(\bigstatevec{F}^{\ec,\supMHD}(\statevec{U}_{ijk}, \statevec{U}_{mjk})\cdot\avg{J\spacevec{a}^{\,1}}_{(i,m)jk}\right).
\end{aligned}
\end{equation}
We use the summation-by-parts property $2\qmat_{im} = \qmat_{im} - \qmat_{mi} + \bmat_{im}$, perform a reindexing of $i$ and $m$ to subsume the $\qmat_{mi}$ term and use the facts that $\bigstatevec{F}^{\ec,\supMHD}(\statevec{U}_{ijk}, \statevec{U}_{mjk})$ and the average operator of the metric term $\avg{Ja^1}_{(i,m)jk}$ are symmetric with respect to the index $i$ and $m$ to rewrite the $\xi-$direction contribution to the volume integral approximation as
\begin{equation}\label{eq:summ1}
\begin{split}
\sum\limits_{i=0}^N\statevec W^T_{ijk}\sum_{m=0}^N& 2\qmat_{im}\!\left(\bigstatevec F^{\ec,\supMHD}(\statevec{U}_{ijk}, \statevec{U}_{mjk})\cdot\avg{J\spacevec{a}^{\,1}}_{(i,m)jk}\right) \\
= \sum_{i,m=0}^N &\statevec W^T_{ijk}\,(\qmat_{im} - \qmat_{mi} + \bmat_{im})\left( 
\bigstatevec F^{\ec,\supMHD}(\statevec{U}_{ijk},\statevec{U}_{mjk})\cdot\avg{J\spacevec{a}^{\,1}}_{(i,m)jk}\right)\\
= \sum_{i,m=0}^N &\qmat_{im}\!\left(\statevec W_{ijk} - \statevec W_{mjk}\right)^T\!\left(
\bigstatevec F^{\ec,\supMHD}(\statevec{U}_{ijk},\statevec{U}_{mjk})\cdot\avg{J\spacevec{a}^{\,1}}_{(i,m)jk}\right)\, \\
&+ \bmat_{im}\!\statevec W^T_{ijk}\!\left(\bigstatevec F^{\ec,\supMHD}(\statevec{U}_{ijk},\statevec{U}_{mjk})\cdot\avg{J\spacevec{a}^{\,1}}_{(i,m)jk}\right).
\end{split}
\end{equation}

Because the proof at hand only concerns ideal MHD terms, we are only concerned with the entropy conservation condition \eqref{eq:MHDPartDiscrete}, which we use to replace the first terms in \eqref{eq:summ1} with
\begin{equation}\label{eq:moreBs}
\left(\statevec W_{ijk} - \statevec W_{mjk}\right)^T\statevec F_\ell^{\ec,\supMHD}(\statevec{U}_{ijk},\statevec{U}_{mjk}) = \left(\Psi^{\supMHD}_\ell\right)_{ijk}-\left(\Psi^{\supMHD}_\ell\right)_{mjk}
-\frac{1}{2}\left(\left(B_\ell\right)_{ijk}+\left(B_\ell\right)_{mjk}\right)\left(\wDotJan_{ijk}-\wDotJan_{mjk}\right)
\end{equation}
for $\ell = 1,2,3$.

Furthermore, note that the entries of the boundary matrix $\bmat$ are only non-zero when $i=m=0$ or $i=m=N$, so
\begin{equation}\label{eq:boundaryBs}
\bmat_{im}\statevec W^T_{ijk}\statevec F_\ell^{\ec,\supMHD}(\statevec{U}_{ijk},\statevec{U}_{mjk}) = \bmat_{im}\!\left(\left(\Psi_l^{\supMHD}\right)_{ijk} - \wDotJan_{ijk}\left(B_\ell\right)_{ijk}\right)
\end{equation}
for $\ell = 1,2,3$.

We substitute \eqref{eq:moreBs} and \eqref{eq:boundaryBs} into the final line of \eqref{eq:summ1} to find
\begin{equation}\label{eq:awfulSum}
\begin{aligned}
\sum\limits_{i=0}^N&\statevec W^T_{ijk}\sum_{m=0}^N 2\qmat_{im}\!\left(\bigstatevec F^{\ec,\supMHD}(\statevec{U}_{ijk}, \statevec{U}_{mjk})\cdot\avg{J\spacevec{a}^{\,1}}_{(i,m)jk}\right)\\
&=\sum\limits_{i,m=0}^N\qmat_{im}\!\left(\left[\spacevec{\Psi}^{\supMHD}_{ijk}-\spacevec{\Psi}^{\supMHD}_{mjk}-\frac{1}{2}\left(\spacevec{B}_{ijk}+\spacevec{B}_{mjk}\right)\left(\wDotJan_{ijk}-\wDotJan_{mjk}\right)\right]\cdot\avg{J\spacevec{a}^{\,1}}_{(i,m)jk}\right)\\
&\qquad\qquad\qquad \qquad+  \bmat_{im}\!\left(\left[\spacevec{\Psi}^{\supMHD}_{ijk} - \wDotJan_{ijk}\spacevec{B}_{ijk}\right]\cdot\avg{J\spacevec{a}^{\,1}}_{(i,m)jk}\right).
\end{aligned}
\end{equation}
We next examine the terms of the sum \eqref{eq:awfulSum} systematically from left to right. Now, because the derivative of a constant is zero (i.e. the rows of $\qmat$ sum to zero),
\begin{equation}\label{eq:B11}
\begin{aligned}
\sum\limits_{i,m=0}^N\qmat_{im}\!\left(\spacevec{\Psi}^{\supMHD}_{ijk}\cdot\avg{J\spacevec{a}^{\,1}}_{(i,m)jk}\right) &= \frac{1}{2}\sum\limits_{i=0}^N\left(\spacevec{\Psi}^{\supMHD}_{ijk}\cdot\left(J\spacevec{a}^{\,1}\right)_{ijk}\right)\sum\limits_{m=0}^N\qmat_{im} + \frac{1}{2}\sum\limits_{i,m=0}^N\qmat_{im}\!\left(\spacevec{\Psi}^{\supMHD}_{ijk}\cdot\left(J\spacevec{a}^{\,1}\right)_{mjk}\right)\\[0.05cm]
&=\frac{1}{2}\sum\limits_{i,m=0}^N\qmat_{im}\!\left(\spacevec{\Psi}^{\supMHD}_{ijk}\cdot\left(J\spacevec{a}^{\,1}\right)_{mjk}\right).
\end{aligned}
\end{equation}
Next, on the second term, we use the summation by parts property \eqref{eq:SBPProperty}, reindex on the $\qmat_{mi}$ term, and the symmetric property of the arithmetic mean to rewrite
\begin{equation}\label{eq:B12}
\begin{aligned}
-\!\!\!\sum\limits_{i,m=0}^N\qmat_{im}\!\left(\spacevec{\Psi}^{\supMHD}_{mjk}\cdot\avg{J\spacevec{a}^{\,1}}_{(i,m)jk}\right) &= -\!\!\!\sum\limits_{i,m=0}^N(\bmat_{im}-\qmat_{mi})\left(\spacevec{\Psi}^{\supMHD}_{mjk}\cdot\avg{J\spacevec{a}^{\,1}}_{(i,m)jk}\right)\\
&=-\!\!\!\sum\limits_{i,m=0}^N(\bmat_{im}-\qmat_{im})\left(\spacevec{\Psi}^{\supMHD}_{ijk}\cdot\avg{J\spacevec{a}^{\,1}}_{(i,m)jk}\right)\\
&=-\!\!\!\sum\limits_{i,m=0}^N\bmat_{im}\!\left(\spacevec{\Psi}^{\supMHD}_{ijk}\cdot\avg{J\spacevec{a}^{\,1}}_{(i,m)jk}\right) + \sum\limits_{i,m=0}^N\qmat_{im}\!\left(\spacevec{\Psi}^{\supMHD}_{ijk}\cdot\avg{J\spacevec{a}^{\,1}}_{(i,m)jk}\right)\\
&=-\!\!\!\sum\limits_{i,m=0}^N\bmat_{im}\!\left(\spacevec{\Psi}^{\supMHD}_{ijk}\cdot\avg{J\spacevec{a}^{\,1}}_{(i,m)jk}\right) + \frac{1}{2}\sum\limits_{i,m=0}^N\qmat_{im}\!\left(\spacevec{\Psi}^{\supMHD}_{ijk}\cdot\left(J\spacevec{a}^{\,1}\right)_{mjk}\right),
\end{aligned}
\end{equation}
where, again, one term in the second arithmetic mean drops out due to consistency of the matrix $\qmat$. 

We come next to the terms involving $\spacevec{B}$ and $\wDotJan$ in \eqref{eq:awfulSum}. We leave these terms grouped for convenience and first expand to find
\begin{equation}\label{eq:expandGross}
\begin{aligned}
\sum\limits_{i,m=0}^N\qmat_{im}\!&\left(\left[-\frac{1}{2}\left(\spacevec{B}_{ijk}+\spacevec{B}_{mjk}\right)\left(\wDotJan_{ijk}-\wDotJan_{mjk}\right)\right]\cdot\avg{J\spacevec{a}^{\,1}}_{(i,m)jk}\right)\\
&=-\frac{1}{2}\sum\limits_{i,m=0}^N \qmat_{im}\!\left(\left[\wDotJan_{ijk}\spacevec{B}_{ijk} + \wDotJan_{ijk}\spacevec{B}_{mjk} - \wDotJan_{mjk}\spacevec{B}_{ijk} - \wDotJan_{mjk}\spacevec{B}_{mjk}\right]\cdot\avg{J\spacevec{a}^{\,1}}_{(i,m)jk}\right).
\end{aligned}
\end{equation}
We examine each term from \eqref{eq:expandGross}: for the first term we use the consistency of $\qmat$, the second term is left alone, the third term makes a reindexing of $i$ and $m$, and the fourth term applies the SBP property to obtain
\begin{equation}\label{eq:expandGross2}
\begin{aligned}
-\frac{1}{2}\sum\limits_{i,m=0}^N& \qmat_{im}\!\left(\left[\wDotJan_{ijk}\spacevec{B}_{ijk} + \wDotJan_{ijk}\spacevec{B}_{mjk} - \wDotJan_{mjk}\spacevec{B}_{ijk} - \wDotJan_{mjk}\spacevec{B}_{mjk}\right]\cdot\avg{J\spacevec{a}^{\,1}}_{(i,m)jk}\right)\\
=-\frac{1}{4}&\sum\limits_{i,m=0}^N\qmat_{im}\wDotJan_{ijk}\!\left(\spacevec{B}_{ijk}\cdot\left(J\spacevec{a}^{\,1}\right)_{mjk}\right)-\frac{1}{2}\sum\limits_{i,m=0}^N(\qmat_{im}-\qmat_{mi})\wDotJan_{ijk}\!\left(\spacevec{B}_{mjk}\cdot\avg{J\spacevec{a}^{\,1}}_{(i,m)jk}\right)\\[0.05cm]
&\qquad\qquad\qquad\qquad\qquad\qquad\quad + \frac{1}{2}\sum\limits_{i,m=0}^N(\bmat_{im}-\qmat_{mi})\wDotJan_{mjk}\!\left(\spacevec{B}_{mjk}\cdot\avg{J\spacevec{a}^{\,1}}_{(i,m)jk}\right).
\end{aligned}
\end{equation}
Next, we use the SBP property on the $\qmat_{mi}$ term in the second sum of \eqref{eq:expandGross2} and reindex $i$ and $m$ in the third term to get
\begin{equation}\label{eq:expandGross3}
\begin{aligned}
-\frac{1}{2}\sum\limits_{i,m=0}^N& \qmat_{im}\left(\left[\wDotJan_{ijk}\spacevec{B}_{ijk} + \wDotJan_{ijk}\spacevec{B}_{mjk} - \wDotJan_{mjk}\spacevec{B}_{ijk} - \wDotJan_{mjk}\spacevec{B}_{mjk}\right]\cdot\avg{J\spacevec{a}^{\,1}}_{(i,m)jk}\right)\\
=-\frac{1}{4}&\sum\limits_{i,m=0}^N\qmat_{im}\wDotJan_{ijk}\!\left(\spacevec{B}_{ijk}\cdot\left(J\spacevec{a}^{\,1}\right)_{mjk}\right)-\frac{1}{2}\sum\limits_{i,m=0}^N(2\qmat_{im}-\bmat_{im})\wDotJan_{ijk}\!\left(\spacevec{B}_{mjk}\cdot\avg{J\spacevec{a}^{\,1}}_{(i,m)jk}\right)\\[0.05cm]
&\qquad\qquad\qquad\qquad\qquad\qquad\quad + \frac{1}{2}\sum\limits_{i,m=0}^N(\bmat_{im}-\qmat_{im})\wDotJan_{ijk}\!\left(\spacevec{B}_{ijk}\cdot\avg{J\spacevec{a}^{\,1}}_{(i,m)jk}\right).
\end{aligned}
\end{equation}
It is clear now that the terms with the boundary matrix, $\bmat$, combine from the second and third terms of \eqref{eq:expandGross3}. Also, the $i$ term of the $\qmat_{im}$ piece of the third term cancels due to consistency (similar to \eqref{eq:B11}). The remaining part of the third term then combines with the first term arriving at
\begin{equation}\label{eq:expandGross4}
\begin{aligned}
-\frac{1}{2}\sum\limits_{i,m=0}^N& \qmat_{im}\!\left(\left[\wDotJan_{ijk}\spacevec{B}_{ijk} + \wDotJan_{ijk}\spacevec{B}_{mjk} - \wDotJan_{mjk}\spacevec{B}_{ijk} - \wDotJan_{mjk}\spacevec{B}_{mjk}\right]\cdot\avg{J\spacevec{a}^{\,1}}_{(i,m)jk}\right)\\
=&-\frac{1}{2}\sum\limits_{i,m=0}^N\qmat_{im}\wDotJan_{ijk}\!\left(\spacevec{B}_{ijk}\cdot\left(J\spacevec{a}^{\,1}\right)_{mjk}\right)-\!\!\!\sum\limits_{i,m=0}^N\qmat_{im}\wDotJan_{ijk}\!\left(\spacevec{B}_{mjk}\cdot\avg{J\spacevec{a}^{\,1}}_{(i,m)jk}\right)\\[0.05cm]
&\qquad\qquad\qquad\qquad\qquad\qquad\quad +\sum\limits_{i,m=0}^N\bmat_{im}\wDotJan_{ijk}\!\left(\spacevec{B}_{ijk}\cdot\avg{J\spacevec{a}^{\,1}}_{(i,m)jk}\right).
\end{aligned}
\end{equation}

Combining the results of \eqref{eq:B11}, \eqref{eq:B12}, and \eqref{eq:expandGross4}, we rewrite \eqref{eq:awfulSum} to have 
\begin{equation}\label{eq:awfulSumRedux}
\begin{aligned}
\sum\limits_{i=0}^N&\statevec W^T_{ijk}\sum_{m=0}^N 2\qmat_{im}\!\left(\bigstatevec F^{\ec,\supMHD}(\statevec{U}_{ijk}, \statevec{U}_{mjk})\cdot\avg{J\spacevec{a}^{\,1}}_{(i,m)jk}\right)\\
&=-\!\!\!\sum\limits_{i,m=0}^N\bmat_{im}\!\left(\spacevec{\Psi}^{\supMHD}_{ijk}\cdot\avg{J\spacevec{a}^{\,1}}_{(i,m)jk}\right)
+ \sum\limits_{i,m=0}^N\qmat_{im}\!\left(\spacevec{\Psi}^{\supMHD}_{ijk}\cdot\left(J\spacevec{a}^{\,1}\right)_{mjk}\right)
-\frac{1}{2}\sum\limits_{i,m=0}^N\qmat_{im}\wDotJan_{ijk}\!\left(\spacevec{B}_{ijk}\cdot\left(J\spacevec{a}^{\,1}\right)_{mjk}\right)\\[0.05cm]
&\quad-\!\!\!\sum\limits_{i,m=0}^N\qmat_{im}\wDotJan_{ijk}\!\left(\spacevec{B}_{mjk}\cdot\avg{J\spacevec{a}^{\,1}}_{(i,m)jk}\right)
+ \sum\limits_{i,m=0}^N\bmat_{im}\wDotJan_{ijk}\!\left(\spacevec{B}_{ijk}\cdot\avg{J\spacevec{a}^{\,1}}_{(i,m)jk}\right)\\[0.05cm]
&\quad+ \sum\limits_{i,m=0}^N\bmat_{im}\!\left(\left[\spacevec{\Psi}^{\supMHD}_{ijk} - \wDotJan_{ijk}\spacevec{B}_{ijk}\right]\cdot\avg{J\spacevec{a}^{\,1}}_{(i,m)jk}\right).
\end{aligned}
\end{equation}
Conveniently, several terms in \eqref{eq:awfulSumRedux} cancel to leave
\begin{equation}\label{eq:awfulSumRedux2}
\begin{aligned}
\sum\limits_{i=0}^N&\statevec W^T_{ijk}\sum_{m=0}^N 2\qmat_{im}\!\left(\bigstatevec F^{\ec,\supMHD}(\statevec{U}_{ijk}, \statevec{U}_{mjk})\cdot\avg{J\spacevec{a}^{\,1}}_{(i,m)jk}\right)\\
&=\sum\limits_{i,m=0}^N\qmat_{im}\!\left(\spacevec{\Psi}^{\supMHD}_{ijk}\cdot\left(J\spacevec{a}^{\,1}\right)_{mjk}\right)
-\frac{1}{2}\sum\limits_{i,m=0}^N\qmat_{im}\wDotJan_{ijk}\!\left(\spacevec{B}_{ijk}\cdot\left(J\spacevec{a}^{\,1}\right)_{mjk}\right)
-\!\!\!\sum\limits_{i,m=0}^N\qmat_{im}\wDotJan_{ijk}\!\left(\spacevec{B}_{mjk}\cdot\avg{J\spacevec{a}^{\,1}}_{(i,m)jk}\right).
\end{aligned}
\end{equation}

We are now prepared to revisit the contributions from the non-conservative volume terms \eqref{eq:nonConsProof}, which read in the $\xi-$direction as
\begin{equation}\label{eq:nonConsProof1}
\begin{aligned}
\sum\limits_{i,j,k=0}^{N}\omega_{ijk}\statevec{W}^T_{ijk} \sum_{m=0}^N \dmat_{im}\Jan_{ijk}\!\left(\spacevec{B}_{mjk}\cdot\avg{J\spacevec{a}^{\,1}}_{(i,m)jk}\right)
=&\sum\limits_{j,k=0}^N\omega_{jk}\sum_{i,m=0}^N\omega_i\dmat_{im}\statevec{W}^T_{ijk}\Jan_{ijk}\!\left(\spacevec{B}_{mjk}\cdot\avg{J\spacevec{a}^{\,1}}_{(i,m)jk}\right)\\
=&\sum\limits_{j,k=0}^N\omega_{jk}\sum_{i,m=0}^N\qmat_{im}\statevec{W}^T_{ijk}\Jan_{ijk}\!\left(\spacevec{B}_{mjk}\cdot\avg{J\spacevec{a}^{\,1}}_{(i,m)jk}\right)\\
=&\sum\limits_{j,k=0}^N\omega_{jk}\sum_{i,m=0}^N\qmat_{im}\wDotJan_{ijk}\!\left(\spacevec{B}_{mjk}\cdot\avg{J\spacevec{a}^{\,1}}_{(i,m)jk}\right),
\end{aligned}
\end{equation}
where we use the definition of the SBP matrix and the property \eqref{eq:contractSource} contracting the non-conservative term into entropy space.
Comparing the result \eqref{eq:nonConsProof1} and the last term of \eqref{eq:awfulSumRedux2} we see that they cancel when added together. Thus, the contribution in the $\xi-$direction is
\begin{equation}\label{eq:awfulSumRedux3}
\begin{aligned}
\sum\limits_{j,k=0}^N\omega_{jk}\sum\limits_{i=0}^N&\statevec W^T_{ijk}\left[\sum_{m=0}^N 2\qmat_{im}\!\left(\bigstatevec F^{\ec,\supMHD}(\statevec{U}_{ijk}, \statevec{U}_{mjk})\cdot\avg{J\spacevec{a}^{\,1}}_{(i,m)jk}\right)
+ \sum_{m=0}^N\qmat_{im}\wDotJan_{ijk}\!\left(\spacevec{B}_{mjk}\cdot\avg{J\spacevec{a}^{\,1}}_{(i,m)jk}\right)\right]\\
&\qquad\qquad=\sum\limits_{j,k=0}^N\omega_{jk}\left[\sum\limits_{i,m=0}^N\qmat_{im}\!\left(\spacevec{\Psi}^{\supMHD}_{ijk}\cdot\left(J\spacevec{a}^{\,1}\right)_{mjk}\right)
-\frac{1}{2}\sum\limits_{i,m=0}^N\qmat_{im}\wDotJan_{ijk}\!\left(\spacevec{B}_{ijk}\cdot\left(J\spacevec{a}^{\,1}\right)_{mjk}\right)\right].
\end{aligned}
\end{equation}

Summarized, the total contribution of the $\xi-$direction of \eqref{eq:F_EC_vol} and \eqref{eq:nonConsProof} in the volume term is
\begin{equation}
\begin{aligned}
\sum\limits_{i,j,k=0}^N\omega_{ijk}&\statevec{W}^T_{ijk}2\sum\limits_{m=0}^N\dmat_{im}\!\left(\left[\bigstatevec{F}^{\ec,\supMHD}(\statevec{U}_{ijk},\statevec{U}_{mjk})+\Jan_{ijk}\spacevec{B}_{mjk}\right]\cdot\avg{J\spacevec{a}^{\,1}}_{(i,m)jk}\right)\\
&=\sum\limits_{i,j,k=0}^N\omega_{ijk}\sum\limits_{m=0}^N\dmat_{im}\!\left(\left[\spacevec{\Psi}^{\supMHD}_{ijk}
-\frac{1}{2}\wDotJan_{ijk}\spacevec{B}_{ijk}\right]\cdot\left(J\spacevec{a}^{\, 1}\right)_{mjk}\right),
\end{aligned}
\end{equation}
where we returned the polynomial derivative matrix due to the property $\qmat_{im}=\omega_i\dmat_{im}$.

Similar results hold for the $\eta-$ and $\zeta-$directions of the volume integral approximations leading to
\begin{equation}
\begin{aligned}
\iprodN{\DD\bigcontravec F^{\ec,\supMHD},\statevec W} + \iprodN{\Jan\DDncdiv\contraspacevec{B},\statevec{W}}& =\sum\limits_{i,j,k=0}^N\omega_{ijk}\sum\limits_{m=0}^N\dmat_{im}\!\left(\left[\spacevec{\Psi}^{\supMHD}_{ijk}
-\frac{1}{2}\wDotJan_{ijk}\spacevec{B}_{ijk}\right]\cdot\left(J\spacevec{a}^{\, 1}\right)_{mjk}\right)\\[0.05cm]
& + \sum\limits_{i,j,k=0}^N\omega_{ijk}\sum\limits_{m=0}^N\dmat_{jm}\!\left(\left[\spacevec{\Psi}^{\supMHD}_{ijk}
-\frac{1}{2}\wDotJan_{ijk}\spacevec{B}_{ijk}\right]\cdot\left(J\spacevec{a}^{\, 2}\right)_{imk}\right)\\[0.05cm]
& + \sum\limits_{i,j,k=0}^N\omega_{ijk}\sum\limits_{m=0}^N\dmat_{km}\!\left(\left[\spacevec{\Psi}^{\supMHD}_{ijk}
-\frac{1}{2}\wDotJan_{ijk}\spacevec{B}_{ijk}\right]\cdot\left(J\spacevec{a}^{\, 3}\right)_{ijm}\right).
\end{aligned}
\end{equation}
Regrouping these volume terms we have shown that
\begin{equation}
\begin{aligned}
&\iprodN{\DD\bigcontravec F^{\ec,\supMHD},\statevec W} + \iprodN{\Jan\DDncdiv\contraspacevec{B},\statevec{W}}=\\
&~~~ \sum\limits_{i,j,k=0}^N\omega_{ijk}\left(\left(\Psi^{\supMHD}_1\right)_{ijk} - \frac{1}{2}\wDotJan_{ijk}(B_1)_{ijk}\right)\left\{\sum\limits_{m=0}^N\dmat_{im}\!\left(Ja_1^1\right)_{mjk} + \sum\limits_{m=0}^N\dmat_{jm}\!\left(Ja_1^2\right)_{imk} + \sum\limits_{m=0}^N\dmat_{km}\!\left(Ja_1^3\right)_{ijm}\right\}\\[0.05cm]
& + \sum\limits_{i,j,k=0}^N\omega_{ijk}\left(\left(\Psi^{\supMHD}_2\right)_{ijk} - \frac{1}{2}\wDotJan_{ijk}(B_2)_{ijk}\right)\left\{\sum\limits_{m=0}^N\dmat_{im}\!\left(Ja_2^1\right)_{mjk} + \sum\limits_{m=0}^N\dmat_{jm}\!\left(Ja_2^2\right)_{imk} + \sum\limits_{m=0}^N\dmat_{km}\!\left(Ja_2^3\right)_{ijm}\right\}\\[0.05cm]
& + \sum\limits_{i,j,k=0}^N\omega_{ijk}\left(\left(\Psi^{\supMHD}_3\right)_{ijk} - \frac{1}{2}\wDotJan_{ijk}(B_3)_{ijk}\right)\left\{\sum\limits_{m=0}^N\dmat_{im}\!\left(Ja_3^1\right)_{mjk} + \sum\limits_{m=0}^N\dmat_{jm}\!\left(Ja_3^2\right)_{imk} + \sum\limits_{m=0}^N\dmat_{km}\!\left(Ja_3^3\right)_{ijm}\right\},
\end{aligned}
\end{equation}
which gives the desired result \eqref{eq:MHDVolTerms}, provided the metric identities are satisfied discretely, i.e., that
\begin{equation}\label{eq:discMetIds_MHD}
\sum_{m=0}^N \dmat_{im}\!\left(J  a^1_n\right)_{mjk}+\dmat_{jm}\!\left(Ja^2_n\right)_{imk}+\dmat_{km}\!\left(Ja^3_n\right)_{ijm}= \sum_{l=1}^{3}\frac{\partial}{\partial\xi^{l}}\interpolant{Ja^l_n}  = 0                                                                                                   
\end{equation}
for $n=1,2,3$ at all LGL nodes $i,j,k=0,\ldots,N$ within an element.

%%%%%%%%%%%%%%%%%%%%%%%%%%%%%%%%%%%%%%%%%%%%%%%%%%%%%%%%%%%%%%%%%%%%%%%%
\section{Proof of Lemma \ref{lem:GLM_EC_vol} (Entropy contribution of the curvilinear GLM volume terms)}\label{Sec:GLM_vol}
%%%%%%%%%%%%%%%%%%%%%%%%%%%%%%%%%%%%%%%%%%%%%%%%%%%%%%%%%%%%%%%%%%%%%%%%
In this section, we demonstrate that the GLM volume contributions of \eqref{eq:schemeFinal} reduce to zero in entropy space, i.e,
\begin{equation}\label{eq:volInProof_GLM}
\iprodN{\DD\bigcontravec F^{\ec,\supGLM},\statevec W} + \change{\iprodN{\GalDT\cdot\DDncgrad\psi,\statevec{W}}}= 0.
\end{equation}
We begin with the term arising from the $\bigcontravec F^{\ec,\supGLM}$. Similar to the proof in \ref{Sec:MHD_vol}, we first expand each of the volume contributions
\begin{equation}
\begin{aligned}
\iprodN{\DD\bigcontravec F^{\ec,\supGLM},\statevec{W}} = 
 \sum\limits_{i,j,k=0}^{N}\omega_{ijk}\statevec{W}^T_{ijk}&\left[2\sum_{m=0}^N \dmat_{im}\!\left(\bigstatevec{F}^{\ec,\supGLM}(\statevec{U}_{ijk}, \statevec{U}_{mjk})\cdot\avg{J\spacevec{a}^{\,1}}_{(i,m)jk}\right)\right.\\[0.1cm]
&+     2\sum_{m=0}^N \dmat_{jm}\!\left(\bigstatevec{F}^{\ec,\supGLM}(\statevec{U}_{ijk}, \statevec{U}_{imk})\cdot\avg{J\spacevec{a}^{\,2}}_{i(j,m)k}\right)\\[0.1cm]
&+     \left.2\sum_{m=0}^N \dmat_{km}\!\left(\bigstatevec{F}^{\ec,\supGLM}(\statevec{U}_{ijk}, \statevec{U}_{ijm})\cdot\avg{J\spacevec{a}^{\,3}}_{ij(k,m)}\right)\right].
\end{aligned}
\end{equation}
Again, focus is given to the $\xi-$direction term, as the other spatial directions follow from an analogous argument. The sum can be written in terms of the SBP matrix \eqref{eq:SBPProperty}, $\qmat_{im}=\omega_{i}\dmat_{im}$,
\begin{equation}
\begin{aligned}
\sum\limits_{i,j,k=0}^{N}\omega_{ijk}\statevec{W}^T_{ijk}&2\sum_{m=0}^N \dmat_{im}\!\left(\bigstatevec{F}^{\ec,\supGLM}(\statevec{U}_{ijk}, \statevec{U}_{mjk})\cdot\avg{J\spacevec{a}^{\,1}}_{(i,m)jk}\right)\\[0.05cm]
=&\sum\limits_{j,k=0}^{N}\omega_{jk}\sum\limits_{i=0}^N\statevec{W}^T_{ijk}\sum_{m=0}^N 2\omega_i\dmat_{im}\!\left(\bigstatevec{F}^{\ec,\supGLM}(\statevec{U}_{ijk}, \statevec{U}_{mjk})\cdot\avg{J\spacevec{a}^{\,1}}_{(i,m)jk}\right)\\[0.05cm]
=&\sum\limits_{j,k=0}^{N}\omega_{jk}\sum\limits_{i=0}^N\statevec{W}^T_{ijk}\sum_{m=0}^N 2\qmat_{im}\!\left(\bigstatevec{F}^{\ec,\supGLM}(\statevec{U}_{ijk}, \statevec{U}_{mjk})\cdot\avg{J\spacevec{a}^{\,1}}_{(i,m)jk}\right).
\end{aligned}
\end{equation}
We apply the summation-by-parts property $2\qmat_{im} = \qmat_{im} - \qmat_{mi} + \bmat_{im}$, perform a reindexing of $i$ and $m$ to subsume the $\qmat_{mi}$ term and use the symmetry with respect to the index $i$ and $m$ of $\bigstatevec{F}^{\ec,\supGLM}(\statevec{U}_{ijk}, \statevec{U}_{mjk})$ and the average operator of the metric term $\avg{Ja^1}_{(i,m)jk}$ to rewrite the $\xi-$direction contribution as
\begin{equation}\label{eq:summ1_GLM}
\begin{split}
\sum\limits_{i=0}^N\statevec W^T_{ijk}\sum_{m=0}^N& 2\qmat_{im}\!\left(\bigstatevec F^{\ec,\supGLM}(\statevec{U}_{ijk}, \statevec{U}_{mjk})\cdot\avg{J\spacevec{a}^{\,1}}_{(i,m)jk}\right) \\
= \sum_{i,m=0}^N &\statevec W^T_{ijk}\,(\qmat_{im} - \qmat_{mi} + \bmat_{im})\left( 
\bigstatevec F^{\ec,\supGLM}(\statevec{U}_{ijk},\statevec{U}_{mjk})\cdot\avg{J\spacevec{a}^{\,1}}_{(i,m)jk}\right)\\
= \sum_{i,m=0}^N &\qmat_{im}\left(\statevec W_{ijk} - \statevec W_{mjk}\right)^T\!
\left(\bigstatevec F^{\ec,\supGLM}(\statevec{U}_{ijk},\statevec{U}_{mjk})\cdot\avg{J\spacevec{a}^{\,1}}_{(i,m)jk}\right)\, \\
&+ \bmat_{im}\statevec W^T_{ijk}\!\left(\bigstatevec F^{\ec,\supGLM}(\statevec{U}_{ijk},\statevec{U}_{mjk})\cdot\avg{J\spacevec{a}^{\,1}}_{(i,m)jk}\right).
\end{split}
\end{equation}

The current proof only contains GLM terms, so we are only concerned with the entropy conservation condition \eqref{eq:GLMPartDiscrete}, which we use to replace the first terms in \eqref{eq:summ1_GLM} with
\begin{equation}\label{eq:moreBs_GLM}
\left(\statevec W_{ijk} - \statevec W_{mjk}\right)^T\statevec F_\ell^{\ec,\supGLM}(\statevec{U}_{ijk},\statevec{U}_{mjk}) = \left(\Psi^{\supGLM}_\ell\right)_{ijk}-\left(\Psi^{\supGLM}_\ell\right)_{mjk} \,,\quad \ell = 1,2,3\,.
\end{equation}
Furthermore, recall that the entries of the boundary matrix $\bmat$ are only non-zero when $i=m=0$ or $i=m=N$, thus
\begin{equation}\label{eq:boundaryBs_GLM}
\bmat_{im}\statevec W^T_{ijk}\statevec F_\ell^{\ec,\supGLM}(\statevec{U}_{ijk},\statevec{U}_{mjk}) = \bmat_{im}\left(\Psi_\ell^{\supGLM}\right)_{ijk}\,,\quad \ell = 1,2,3.
\end{equation}

We substitute \eqref{eq:moreBs_GLM} and \eqref{eq:boundaryBs_GLM} into the final line of \eqref{eq:summ1_GLM} to find
\begin{equation}\label{eq:awfulSum_GLM}
\begin{aligned}
\sum\limits_{i=0}^N&\statevec W^T_{ijk}\sum_{m=0}^N 2\qmat_{im}\!\left(\bigstatevec F^{\ec,\supGLM}(\statevec{U}_{ijk}, \statevec{U}_{mjk})\cdot\avg{J\spacevec{a}^{\,1}}_{(i,m)jk}\right)\\
&=\sum\limits_{i,m=0}^N\qmat_{im}\!\left(\left[\spacevec{\Psi}^{\supGLM}_{ijk}-\spacevec{\Psi}^{\supGLM}_{mjk}\right]\cdot\avg{J\spacevec{a}^{\,1}}_{(i,m)jk}\right)
+ \bmat_{im}\!\left(\spacevec{\Psi}^{\supGLM}_{ijk}\cdot\avg{J\spacevec{a}^{\,1}}_{(i,m)jk}\right).
\end{aligned}
\end{equation}
We individually examine the terms of the sum \eqref{eq:awfulSum_GLM} from left to right. Due to the consistency of the SBP matrix (i.e. the rows of $\qmat$ sum to zero),
\begin{equation}\label{eq:B11_GLM}
\begin{aligned}
\sum\limits_{i,m=0}^N\qmat_{im}\!\left(\spacevec{\Psi}^{\supGLM}_{ijk}\cdot\avg{J\spacevec{a}^{\,1}}_{(i,m)jk}\right) &= \frac{1}{2}\sum\limits_{i=0}^N\left(\spacevec{\Psi}^{\supGLM}_{ijk}\cdot\left(J\spacevec{a}^{\,1}\right)_{ijk}\right)\sum\limits_{m=0}^N\qmat_{im} + \frac{1}{2}\sum\limits_{i,m=0}^N\qmat_{im}\!\left(\spacevec{\Psi}^{\supGLM}_{ijk}\cdot\left(J\spacevec{a}^{\,1}\right)_{mjk}\right)\\[0.05cm]
&=\frac{1}{2}\sum\limits_{i,m=0}^N\qmat_{im}\!\left(\spacevec{\Psi}^{\supGLM}_{ijk}\cdot\left(J\spacevec{a}^{\,1}\right)_{mjk}\right).
\end{aligned}
\end{equation}
On the second term, we apply the summation by parts property \eqref{eq:SBPProperty}, reindex on the $\qmat_{mi}$ term, and utilize the symmetric property of the arithmetic mean to rewrite
\begin{equation}\label{eq:B12_GLM}
\begin{aligned}
-\!\!\!\sum\limits_{i,m=0}^N\qmat_{im}\!&\left(\spacevec{\Psi}^{\supGLM}_{mjk}\cdot\avg{J\spacevec{a}^{\,1}}_{(i,m)jk}\right) = -\!\!\!\sum\limits_{i,m=0}^N(\bmat_{im}-\qmat_{mi})\left(\spacevec{\Psi}^{\supGLM}_{mjk}\cdot\avg{J\spacevec{a}^{\,1}}_{(i,m)jk}\right)\\
&=-\!\!\!\sum\limits_{i,m=0}^N(\bmat_{im}-\qmat_{im})\left(\spacevec{\Psi}^{\supGLM}_{ijk}\cdot\avg{J\spacevec{a}^{\,1}}_{(i,m)jk}\right)\\
&=-\!\!\!\sum\limits_{i,m=0}^N\bmat_{im}\!\left(\spacevec{\Psi}^{\supGLM}_{ijk}\cdot\avg{J\spacevec{a}^{\,1}}_{(i,m)jk}\right) + \sum\limits_{i,m=0}^N\qmat_{im}\!\left(\spacevec{\Psi}^{\supGLM}_{ijk}\cdot\avg{J\spacevec{a}^{\,1}}_{(i,m)jk}\right)\\
&=-\!\!\!\sum\limits_{i,m=0}^N\bmat_{im}\!\left(\spacevec{\Psi}^{\supGLM}_{ijk}\cdot\avg{J\spacevec{a}^{\,1}}_{(i,m)jk}\right) + \frac{1}{2}\sum\limits_{i,m=0}^N\qmat_{im}\!\left(\spacevec{\Psi}^{\supGLM}_{ijk}\cdot\left(J\spacevec{a}^{\,1}\right)_{mjk}\right),
\end{aligned}
\end{equation}
where, again, one term in the second arithmetic mean drops out due to consistency of the matrix $\qmat$. 

Substituting the results \eqref{eq:B11_GLM} and \eqref{eq:B12_GLM} into \eqref{eq:awfulSum_GLM} we find the terms containing the boundary matrix, $\bmat$, cancel and the remaining terms combine to become
\begin{equation}
\sum\limits_{i=0}^N\statevec W^T_{ijk}\sum_{m=0}^N 2\qmat_{im}\!\left(\bigstatevec F^{\ec,\supGLM}(\statevec{U}_{ijk}, \statevec{U}_{mjk})\cdot\avg{J\spacevec{a}^{\,1}}_{(i,m)jk}\right) = \sum\limits_{i,m=0}^N\qmat_{im}\!\left(\spacevec{\Psi}^{\supGLM}_{ijk}\cdot\left(J\spacevec{a}^{\,1}\right)_{mjk}\right)
\end{equation}
Thus, the total contribution in the $\xi-$direction in found to be
\begin{equation}
\begin{aligned}
\sum\limits_{j,k=0}^N\omega_{jk}&\sum\limits_{i=0}^N\statevec W^T_{ijk}\sum_{m=0}^N 2\qmat_{im}\!\left(\bigstatevec F^{\ec,\supGLM}(\statevec{U}_{ijk}, \statevec{U}_{mjk})\cdot\avg{J\spacevec{a}^{\,1}}_{(i,m)jk}\right)\\
=&\sum\limits_{j,k=0}^N\omega_{jk}\sum\limits_{i,m=0}^N\qmat_{im}\!\left(\spacevec{\Psi}^{\supGLM}_{ijk}\cdot\left(J\spacevec{a}^{\,1}\right)_{mjk}\right)
=\sum\limits_{i,j,k=0}^N\omega_{ijk}\sum\limits_{m=0}^N\dmat_{im}\!\left(\spacevec{\Psi}^{\supGLM}_{ijk}\cdot\left(J\spacevec{a}^{\,1}\right)_{mjk}\right)
\end{aligned}
\end{equation}
where we reintroduce the derivative matrix instead of the SBP matrix from the property $\qmat_{im}=\omega_{i}\dmat_{im}$.

Similar results hold for the $\eta-$ and $\zeta-$directions of the GLM volume integral approximations leading to
\begin{equation}
\begin{aligned}
\iprodN{\DD\bigcontravec F^{\ec,\supGLM},\statevec W}& =\sum\limits_{i,j,k=0}^N\omega_{ijk}\sum\limits_{m=0}^N\dmat_{im}\!\left(\spacevec{\Psi}^{\supGLM}_{ijk}\cdot\left(J\spacevec{a}^{\,1}\right)_{mjk}\right)
+ \sum\limits_{i,j,k=0}^N\omega_{ijk}\sum\limits_{m=0}^N\dmat_{jm}\!\left(\spacevec{\Psi}^{\supGLM}_{ijk}\cdot\left(J\spacevec{a}^{\,2}\right)_{imk}\right)\\[0.05cm]
& \qquad+ \sum\limits_{i,j,k=0}^N\omega_{ijk}\sum\limits_{m=0}^N\dmat_{km}\!\left(\spacevec{\Psi}^{\supGLM}_{ijk}\cdot\left(J\spacevec{a}^{\,3}\right)_{ijm}\right).
\end{aligned}
\end{equation}
Regrouping these volume terms we have shown that \change{
\begin{equation}
\begin{aligned}
\iprodN{\DD\bigcontravec F^{\ec,\supGLM},\statevec W}&=
\sum\limits_{i,j,k=0}^N\omega_{ijk}\Psi^{\supGLM}_{1,ijk}\left\{\sum\limits_{m=0}^N\dmat_{im}\left(Ja_1^1\right)_{mjk} + \sum\limits_{m=0}^N\dmat_{jm}\left(Ja_1^2\right)_{imk} + \sum\limits_{m=0}^N\dmat_{km}\left(Ja_1^3\right)_{ijm}\right\}\\[0.05cm]
& + \sum\limits_{i,j,k=0}^N\omega_{ijk}\Psi^{\supGLM}_{2,ijk}\left\{\sum\limits_{m=0}^N\dmat_{im}\left(Ja_2^1\right)_{mjk} + \sum\limits_{m=0}^N\dmat_{jm}\left(Ja_2^2\right)_{imk} + \sum\limits_{m=0}^N\dmat_{km}\left(Ja_2^3\right)_{ijm}\right\}\\[0.05cm]
& + \sum\limits_{i,j,k=0}^N\omega_{ijk}\Psi^{\supGLM}_{3,ijk}\left\{\sum\limits_{m=0}^N\dmat_{im}\left(Ja_3^1\right)_{mjk} + \sum\limits_{m=0}^N\dmat_{jm}\left(Ja_3^2\right)_{imk} + \sum\limits_{m=0}^N\dmat_{km}\left(Ja_3^3\right)_{ijm}\right\},
\end{aligned}
\end{equation}}
which makes this term vanish, provided the metric identities are satisfied discretely, i.e., that
\begin{equation}\label{eq:discMetIds_GLM}
\sum_{m=0}^N \dmat_{im}\,\left(J  a^1_n\right)_{mjk}+\dmat_{jm}\,\left(Ja^2_n\right)_{imk}+\dmat_{km}\,\left(Ja^3_n\right)_{ijm}= 0                                                                                                   
\end{equation}
for $n=1,2,3$ at all LGL nodes $i,j,k=0,\ldots,N$ within an element.

\change{Next, we verify that the non-conservative GLM volume term vanishes. So, we consider the following
\begin{equation}
\begin{aligned}
\iprodN{\GalDT\cdot\DDncgrad\psi,\statevec{W}} = \sum_{i,j,k=0}^N\omega_{ijk}\statevec{W}^T_{ijk} &\left[\;\;\sum_{m=0}^N \dmat_{im}\,\psi_{mjk}\left(\GalD_{ijk} \cdot J\spacevec{a}^{\,1}_{ijk}\right)\right. \\
&\left.+     \sum_{m=0}^N \dmat_{jm}\,\psi_{imk}\left(\GalD_{ijk}\cdot J\spacevec{a}^{\,2}_{ijk}\right)\right. \\
&\left.+     \sum_{m=0}^N \dmat_{km}\,\psi_{ijm}\left(\GalD_{ijk}\cdot J\spacevec{a}^{\,3}_{ijk}\right)\right].
\end{aligned}
\end{equation} 
We factor out the $\GalD$ term because it has no $m$ dependence to find
\begin{equation}
\begin{aligned}
\iprodN{\GalDT\cdot\DDncgrad\psi,\statevec{W}} =\quad& \\
\sum_{i,j,k=0}^N\omega_{ijk}\left(\statevec{W}^T_{ijk}\GalD_{ijk}\right)&\cdot
\left[\sum_{m=0}^N \dmat_{im}\,\psi_{mjk} J\spacevec{a}^{\,1}_{ijk}
.+    \sum_{m=0}^N  \dmat_{jm}\,\psi_{imk} J\spacevec{a}^{\,2}_{ijk}
+   \sum_{m=0}^N \dmat_{km}\,\psi_{ijm} J\spacevec{a}^{\,3}_{ijk}\right].
\end{aligned}
\end{equation} 
As in the continuous case \eqref{eq:contractSourceGLM}, $(\statevec{W}^T_{ijk}\GalD_{ijk}) = 0$ holds point-wise, leading to the desired result
\begin{equation}
\iprodN{\GalDT\cdot \DDncgrad\psi,\statevec{W}} = 0.
\end{equation}
}

\end{document}